\numberwithin{equation}{section}
\newtheorem{proposition}{Proposition}[section]
\newtheorem{theorem}[proposition]{Theorem}
\newtheorem{open}[proposition]{Question}
\newtheorem{lemma}[proposition]{Lemma}
\theoremstyle{definition}
\newtheorem*{definition}{Definition}
\newtheorem{remark}[proposition]{Remark}
\theoremstyle{remark}
\renewcommand\P{\mathbb{P}}
\newcommand\E{\mathbb{E}}
\newcommand{\ce}{\boldsymbol{\mathcal{E}}}
\newcommand\R{\mathbb{R}}
\newcommand{\bea}{\begin{eqnarray}}
\newcommand{\eea}{\end{eqnarray}}
\def\void{}
\def\labelmark{}
\newenvironment{formula}[1]{\def\labelname{#1}
\ifx\void\labelname\def\junk{\begin{displaymath}}
\else\def\junk{\begin{equation}\label{\labelname}}\fi\junk}%
{\ifx\void\labelname\def\junk{\end{displaymath}}
\else\def\junk{\end{equation}}\fi\junk\labelmark\def\labelname{}}
\def\junk{\end{array}\end{displaymath}}
\def\junk{\end{array}\right.\end{equation}}
\def\labelname{}\def\junk{}
\newcommand{\beq}{\begin{formula}}
\newcommand{\eeq}{\end{formula}}
\newcommand{\beqv}{\begin{formula}{}}
\newenvironment{romenumerate}[1][0pt]{
\addtolength{\leftmargini}{#1}\begin{enumerate}
 }{\end{enumerate}}
\newcounter{oldenumi}
{\setcounter{oldenumi}{\value{enumi}}
\begin{romenumerate} \setcounter{enumi}{\value{oldenumi}}}
{\end{romenumerate}}
\xdef\klockan{\the\count1.0\the\count255}
\xdef\klockan{\the\count1.\the\count255}\fi
\newcommand\REM[1]{{\raggedright\texttt{[#1]}\par\marginal{XXX}}}
\newcommand\eps{\varepsilon}
\def\rompar(#1){\textup(#1\textup)}    
\def\xexp(#1){e^{#1}}
\newcounter{CC}
\newcounter{cc}
\newcommand\doi{\mathbb{D}([0,1])}
\newcommand\marginal[1]{\marginpar{\raggedright\parindent=0pt\tiny #1}}
\newcommand\shred{\boldsymbol{\mathcal{S}}}
\newcommand\multset{\mathcal{M}}
\newcommand\randmult{\mathscr{M}^\mu_n}
\newcommand\randdual{\mathscr{G}^\mu_n}
\newcommand\up{\mathrm{u}}
\newcommand\down{\mathrm{d}}
\newcommand{\tth}{\mathtt{h}}
\newcommand{\ttb}{\mathtt{b}}
\newcommand{\one}{\hbox{\rm 1\kern-.27em I}}
\title{Stable shredded spheres and  causal random maps \\ with large faces}
\author{Jakob Bj\"ornberg\thanks{Chalmers and the University of Gothenburg, Sweden. E-mail: jakob.bjornberg@gu.se},~ Nicolas Curien\thanks{Laboratoire de Mathématiques d’Orsay
		Université Paris-Saclay and Institut Universitaire de France. \newline E-mail: nicolas.curien@gmail.com}~ and Sigurdur \"Orn Stef\'ansson\thanks{Science Institute, University of Iceland. E-mail: sigurdur@hi.is }} 
\begin{document}

\maketitle

\begin{abstract} 
	We introduce a new familiy of random compact metric spaces 
	$\shred_\alpha$ for $\alpha\in(1,2)$, which we
	call \emph{stable shredded spheres}. They are constructed from
        excursions of $\alpha$-stable Lévy processes on $[0,1]$
        possessing no negative jumps. Informally, viewing the graph of
        the Lévy excursion in the plane, each jump of the process is
        "cut open" and replaced by a circle and then all points on the
        graph at equal height which are not separated by a jump are
        identified. We show that the shredded spheres arise as scaling
        limits of models of causal 
	random planar maps with large faces 
introduced by Di Francesco and Guitter.
We also establish that their Hausdorff
        dimension is almost surely equal to $\alpha$. Point
        identification in the shredded spheres is intimately connected
        to the presence of decrease points in stable 
spectrally positive L\'evy processes as studied by Bertoin in the 90's.
\end{abstract}

\begin{figure} [h]
	\centerline{\scalebox{0.60}{\includegraphics{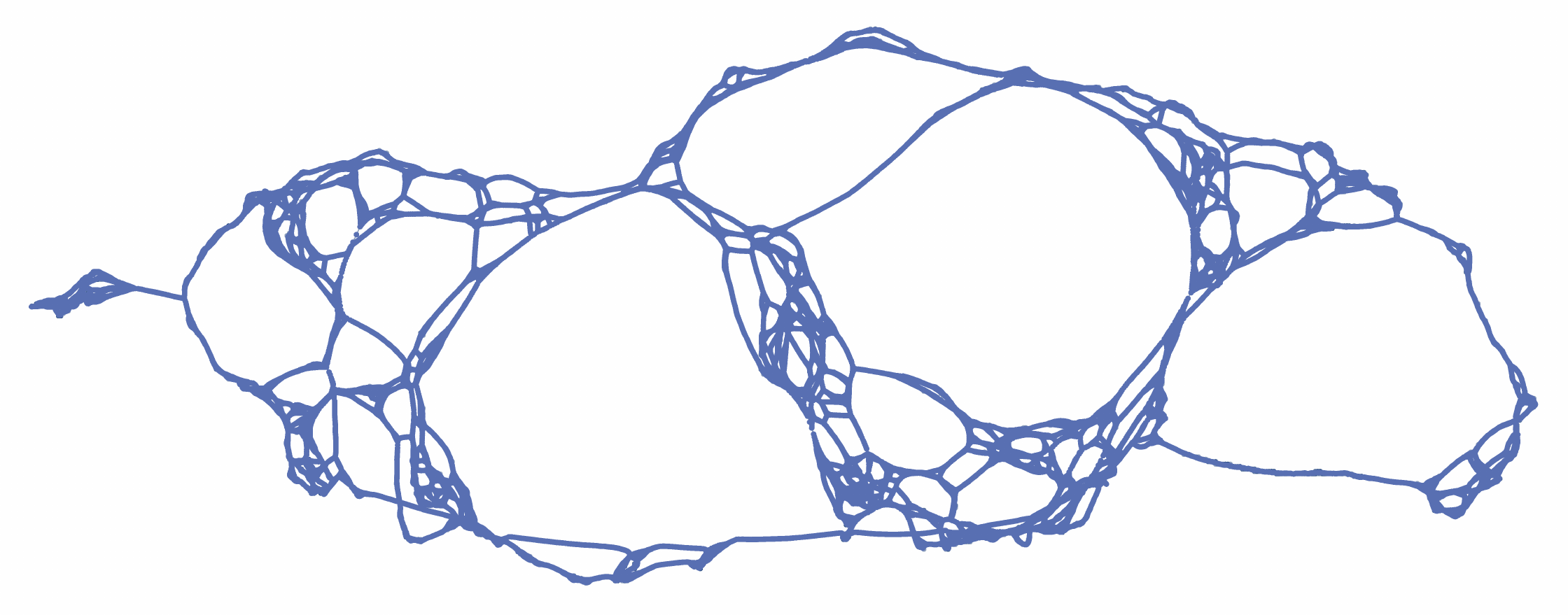}}} 
	\caption{A simulation of a large random causal map $\randdual$ which approximates a stable shredded sphere 
		$\shred_\alpha$ with $\alpha=1.8$. Note that the drawing is not isometric and the time slices are (approximately) vertical.} \label{f:simulation}
\end{figure} 

\section{Introduction}\label{s:intro}
\subsection{Random planar geometries}
In recent years, there has been considerable progress in the study of
random two dimensional surfaces, in particular random planar maps and
their scaling limits.  Planar maps are finite connected graphs
properly drawn on the two-sphere viewed up to continuous
deformations. A natural way to choose a random planar map is to
pick uniformly from the set of triangulations of the sphere with
a fixed number of triangles. Due to bijections of Bouttier, Di
Francesco and Guitter \cite{bouttier:2004} (building on
\cite{cori:1981,schaeffer:1998}) the random triangulations may be
viewed as random labelled trees which are further in bijection with
random processes. 
These correspondences have been used
to prove many deep results about random planar maps. Most notable are
the independent proofs of Miermont \cite{miermont:2013} (uniform
quadrangulations) and Le Gall \cite{legall:2013} (uniform
triangulations and $2p$-angulations) that properly rescaled random
maps converge, in the Gromov--Hausdorff sense, towards a compact
metric space called \emph{the Brownian map} (this type of convergence
is often referred to as a scaling limit). Subsequently, it has been
shown that the Brownian map 
is the scaling limit for a large class of different discrete models, 
see e.g.~\cite{ab:2019,bettinelli:2014,marzouk:2018}.  
It has also been constructed within the Gaussian Free Field by 
Miller and Sheffield \cite{miller-sheffield-QLE}.

Another family of random compact metric spaces was introduced in
\cite{legall:2011} and referred to as \emph{stable maps}. We mention
these here since they are relevant to the model which we introduce in
the current paper. The stable maps arise as scaling limits, at least
along subsequences, of random planar maps defined by assigning weights
to the faces such that the degree of a typical face is in the domain
of attraction of a stable law with index $\alpha\in(1,2)$. See
\cite[Section 5.2]{curien-stFlour} for more details.  One of
the motivation for studying stable maps is that they appear in
statistical mechanical models on planar maps
\cite{budd:2018,legall:2011,richier:2018}.

\subsection{Causal maps} Models of random planar maps became popular
in high energy physics in the 1980's and 1990's due to their
connections to random matrix models and Liouville quantum gravity (see
e.g.~\cite{ambjorn:1997}). In the latter, one quantizes gravity via a
path integral which is formally defined as an integral over a suitable
set of space-time geometries with a weight which is given in terms of
the action of the classical gravity theory. In order to make sense of
this integral, one discretizes the geometries and the problem
essentially boils down to performing the sum over a suitable set of
planar maps having a fixed size and an appropriate weight which is
derived from the gravity action. The weights may be viewed as a
probability measure on this set of planar maps by normalizing. When
restricted to triangulations of the sphere, we recover the model of
uniform triangulation of the sphere described above whose scaling
limit is the Brownian map. This model is referred to by physicists as
\emph{dynamical triangulations} (DT), see Figure~\ref{f:cdt}. 

An important question in the model of DT is what constitues a suitable
set of space-time geometries in the path integral. Ambjørn and Loll
\cite{ambjorn:1998} argued that causality should be taken into account
and imposed a `causal' condition on the planar maps appearing in the
discretized path integral. This was initially formulated for
triangulations roughly as follows. The sphere is divided into lines of
constant latitudes (including the poles) and each adjacent pair of
latitudes is triangulated with a single layer of triangles, see
Figure~\ref{f:cdt}.  This model is referred to as \emph{causal dynamical
  triangulations} (CDT). Although a large uniformly random CDT is a
non-trivial object,
it turns out that its scaling limit is simply a
line segment
\cite[Theorem 1]{curien:2017}. 
The reason is that the latitutes collapse to points in
the limit since a pair of typical points on a given latitude are a
factor of a logarithm closer to each other than to (say) the south
pole. The Gromov-Hausdorff limit is thus a useless
concept to study properties of large CDT.   

\begin{figure} [h]
	\centerline{\scalebox{0.47}{\includegraphics{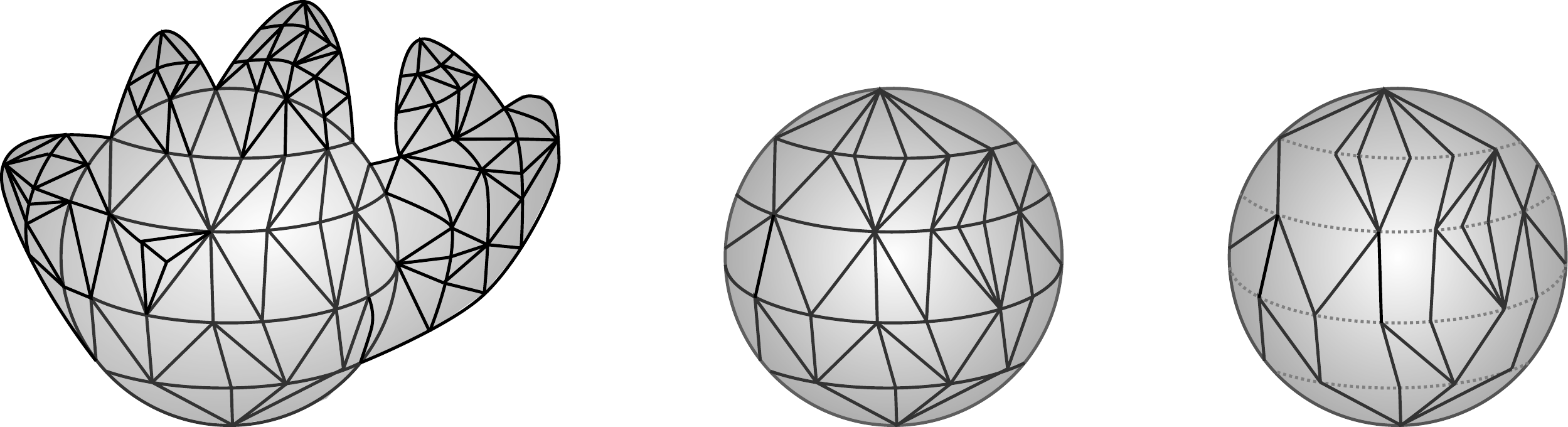}}} 
	\caption{Left: DT. Middle: CDT. Right: Generalized CDT.} \label{f:cdt}
\end{figure} 

In this paper, building on the work of Di Franscesco and Guitter \cite{difrancesco:2002}, we study a model of generalized CDT 
where the faces are allowed to have arbitrary degrees and 
they are assigned weights such that the degree of a typical face is in
the domain of attraction of an $\alpha$-stable law, $\alpha\in(1,2)$
(see Figure~\ref{f:cdt} for an illustration and the next section for a
precise statement).  This is analogous to the definition of the stable
maps which was mentioned above. As we will see these random maps will
have a non-trivial scaling limit.

\subsection{The hard multimer model and its scaling limits}

Let us define our model of hard multimers, which is closely related to
 one originally defined by Di Francesco and Guitter 
 \cite{difrancesco:2002}. We will use it to define the model of causal
 maps through a bijection. 

Consider the semi-infinite cylinder represented by the set $C =
[0,1]\times \mathbb{R}_+$ with the vertical boundaries identified. We
draw equally spaced lines $[0,1]\times {i}$, 
for $i \in \{0,1,2, ...\}$ in
$C$ which we will sometimes refer to as \emph{time-slices}. 
A \emph{hard multimer configuration} 
is defined by drawing vertical lines of
integer length (zero 
 or larger) in $C$ so that their endpoints lie on
time-slices and so that no two such lines intersect, see
Figure~\ref{f:multimers}. A vertical line is referred to as a
\emph{multimer}. We will assume that a multimer configuration is
\emph{connected} in the sense that the projection of all the multimers
onto ${0}\times \mathbb{R}_+$ is an interval of the form $[0,k]$,
$k\geq 0$. One of the endpoints which lie on the first time-slice is
singled out and called the root. 
We consider  a multimer configuration as a planar map whose
vertex set is the set of intersection points of horizontal
 and
vertical lines and the edges are the line segments connecting these
points. Let $\multset$ denote the set of connected multimer
configurations and $\multset_n$ the subset of those with exactly $n$
vertices.

To remove any arbitrariness in the drawing of a multimer configuration
we will use the following convention referred to as a \emph{left
staircase boundary condition} in \cite{difrancesco:2002}: The root is
drawn furthest to the left and the leftmost multimer which intersects
layer $[0,1]\times [i,i+1]$, $i>0$, is to the right of the leftmost
multimer which intersects the layer immediately below. See
Figure~\ref{f:multimers} for an illustration of these concepts.

\begin{figure} [h]
 \centerline{\scalebox{0.80}{\includegraphics{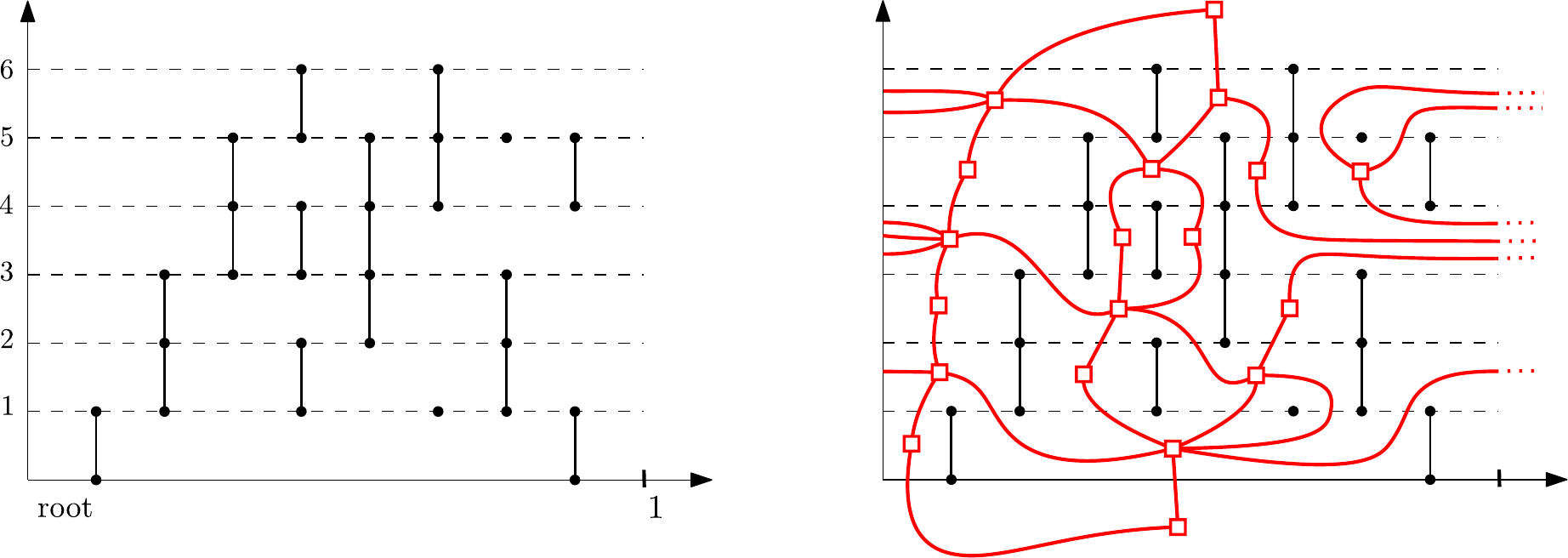}}} 
 \caption{An example of a configuration $M$ of hard multimers on the
   cylinder (element of $\multset_{30}$). The configuration is
   connected and rooted. It is drawn with a left staircase boundary
   condition. On the right, the vertical dual $G$ where dual edges
   crossing multimers are forbidden. } \label{f:multimers} 
 \end{figure} 

Given a multimer configuration $M\in \multset$ we define its
\emph{vertical dual} $G$ by placing a vertex in each face of $M$, as
well as a vertex below the lowest time-slice and a vertex above the
highest time-slice, and an edge between vertices such that the
corresponding faces share a horizontal boundary. 
The graph distance on $G$ is thus the minimal number of
horizontal time-slices to cross to go from one point to the other by
avoiding all the multimers (such a path can go around the cylinder if
needed). The vertical dual will be called a \emph{causal map}. Its
faces are in one to one correspondence with the multimers and the
degree of a face equals 2 times the number of vertices in the
corresponding multimer. Note that a causal triangulation is recovered
by letting all multimers have length 1 and by including the horizontal
edges in the dual.

Let $\mu$ be a probability measure on  $\{-1,0,1,2,\dotsc\}$. 
We define a probability distribution $\mathbb{P}^\mu_n$ on the set of
multimer configurations $\multset_n$ by assigning to each $M \in
\multset_n$ the probability 
\begin {equation} \label{eq:defBoltzmann}
 \mathbb{P}_n^\mu(M) = \frac{1}{Z_n}\prod_{m \text{ multimer in } M} \mu_{|m|} \left( \mu_{-1}\right)^{|m|}
\end {equation} 
where $|m|$ denotes the length (number of edges) 
of a multimer and $Z_n$ is the appropriate normalization. 
The
  presence of the factor $(\mu_{-1})^{m}$ is also a convention of
  normalization and is included here to make the link with random walk
  clearer.  This model is related to that of Di Francesco and
  Guitter \cite{difrancesco:2002}, see Section \ref{sec:DG}.  Denote a
random multimer configuration distributed by $\mathbb{P}_n^\mu$ by
$\randmult$. The associated random vertical dual graph will be denoted
by $\randdual$ and we call it a \emph{random causal map.} 
We regard $\randdual$ as a metric space with metric given by the
graph distance.

When  $\mu$ has mean zero and has finite variance, the result of \cite{curien:2017} should hold and we believe that $\randdual$ falls in the universality class of so
called \emph{generic causal dynamical triangulations}, see Section
\ref{sec:comments}.
In this work we suppose that $\mu$ is centered but has infinite
variance. Specifically we suppose that
\begin {equation}\label{stable}
\mu([k,\infty)) \sim |\Gamma(1-\alpha)|^{-1} k^{-\alpha}\qquad \text{with} \quad \alpha\in (1,2).
\end {equation}
So that $\mu$ is in the strict domain of attraction of the spectrally positive $\alpha$-stable random variable. Then an  interesting random compact metric space appears as the scaling
limit of $\randdual$  for each $\alpha$:

\begin {theorem}[Shredded spheres as scaling limits]\label{thm:conv}  There is a family of random compact
metric  spaces $(\shred_\alpha : 1 < \alpha < 2)$, called the
\emph{stable shredded spheres} such that if  $\mu$ is centered and
satisfies \eqref{stable} then we have  
\[
n^{-1/\alpha} \cdot \randdual \xrightarrow[n\to\infty]{(d)}
\shred_\alpha,
\]
in distribution for the Gromov--Hausdorff topology.
\end {theorem}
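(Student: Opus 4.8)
The plan is to proceed in three stages: encode the discrete objects by random walks, pass to the scaling limit of these walks, and then prove the Gromov--Hausdorff convergence of the associated metric spaces by exhibiting a uniform correspondence. First I would set up the coding of a multimer configuration $M\in\multset_n$ by a lattice path. Reading the multimers in the left-staircase order and recording, for each multimer, a step $+|m|$ (or, more precisely, the relevant increment in $\{-1,0,1,2,\dots\}$ compatible with the Boltzmann weight \eqref{eq:defBoltzmann}), the law $\P_n^\mu$ becomes exactly the law of a $\mu$-distributed random walk bridge/excursion of a prescribed length, conditioned to stay positive and to return to $0$. This is the standard ``one-dimensional ballot-type'' encoding; the factor $(\mu_{-1})^{|m|}$ is there precisely so that the product over multimers is the random-walk likelihood. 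The key output of this step is: under $\P_n^\mu$, the contour/height process of $\randmult$ is a conditioned $\mu$-random walk excursion $(\xi^{(n)}_k)_{0\le k\le n}$.

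Next I would take the scaling limit of the walk. Since $\mu$ is centered and satisfies \eqref{stable}, it is in the strict domain of attraction of the spectrally positive $\alpha$-stable law, so by the classical invariance principle for conditioned stable walks (conditioned to be an excursion / to stay positive and come back) one has
\[
\bigl(n^{-1/\alpha}\,\xi^{(n)}_{\lfloor nt\rfloor}\bigr)_{0\le t\le 1}\ \xrightarrow[n\to\infty]{(d)}\ \bigl(\xi^{\mathrm{exc}}_t\bigr)_{0\le t\le 1},
\]
the normalised excursion of the spectrally positive $\alpha$-stable Lévy process, in the Skorokhod topology on $\doi$. This is the object from which $\shred_\alpha$ is built in the paper's construction (the jumps become circles, equal-height points not separated by a jump are glued). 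By Skorokhod's representation theorem I may assume this convergence holds almost surely on a common probability space.

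Now comes the geometric part, which I expect to be the main obstacle. I would define $\shred_\alpha$ as the quotient of $[0,1]$ (or of the ``stapled'' space where each jump-interval is replaced by a circle) under the pseudo-distance $D_{\xi^{\mathrm{exc}}}$ given by: the distance between two heights is, roughly, the minimal ``vertical variation'' of a continuous path connecting them while going around jumps, exactly mirroring the discrete graph distance on $\randdual$ (``minimal number of time-slices crossed, avoiding multimers, allowed to wind around the cylinder''). The discrete distance $d_{\randdual}$ on the dual graph is, up to a bounded additive constant, a functional of the walk $\xi^{(n)}$ of precisely this type: the distance between two faces is controlled above and below by infima over connecting excursions of the rescaled height differences. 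One then shows two inequalities uniformly: (a) an \emph{upper bound} $n^{-1/\alpha}d_{\randdual}(x,y)\le D_{\xi^{\mathrm{exc}}}(\bar x,\bar y)+o(1)$, obtained by discretising an optimal continuous path and checking it avoids the relevant multimers; and (b) a matching \emph{lower bound}, obtained by the (harder) compactness/equicontinuity argument: any near-geodesic in $\randdual$ has, after rescaling, subsequential limits that are admissible paths for $D_{\xi^{\mathrm{exc}}}$. Combining (a) and (b), and using that $n^{-1/\alpha}\randdual$ has uniformly bounded diameter (again read off from $\sup\xi^{(n)}$), standard arguments (e.g.\ via the Gromov--Hausdorff--Prokhorov correspondence built from the projection $[0,n]\to\randmult$ versus $[0,1]\to\shred_\alpha$) upgrade the a.s.\ functional convergence of the walks to the a.s.\ Gromov--Hausdorff convergence of the spaces, hence convergence in distribution.

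The delicate point, and where I would spend most of the work, is the lower bound (b): one must rule out ``shortcuts'' in $\randdual$ that exploit microscopic structure invisible at scale $n^{1/\alpha}$ — in particular winding around the cylinder near a large multimer. This is exactly where the behaviour of the stable excursion near its jumps (large faces $=$ circles) and the presence of decrease points of the spectrally positive stable process enter, controlling when two equal-height points are identified in the limit. I would handle it by a careful case analysis on whether a discrete near-geodesic does or does not pass ``close'' (at scale $\ll n^{1/\alpha}$) to a macroscopic jump, using regularity estimates for the stable excursion (oscillation bounds, absence of fixed discontinuities, ladder-time asymptotics à la Bertoin) to show the contribution of such passages is negligible and that the surviving paths converge to genuine $D_{\xi^{\mathrm{exc}}}$-admissible paths.
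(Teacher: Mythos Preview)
Your three-stage architecture (walk encoding, invariance principle for the conditioned excursion, then upper/lower bounds on distances via a correspondence) matches the paper. The lower bound you sketch is also essentially the paper's Proposition~\ref{prop:lowbound}: discrete geodesics, viewed as curves on the cylinder, have subsequential limits (via Helly) that are admissible for the height-variation pseudo-metric $\mathbf V$, so $n^{-1/\alpha}\widetilde D_n\ge \mathbf V-o(1)$. That part is fine and, contrary to what you write, is \emph{not} where the work is.

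The gap is in your upper bound. You propose to take an optimal continuous path for $\mathbf V$ and ``discretise it, checking it avoids the relevant multimers''. This does not work in general: Remark~\ref{ex:counterex} gives a deterministic c\`adl\`ag function with $\mathbf V(0,1)=0$ but $\mathbf D^\ast(0,1)=2$, and the discrete graph distance tracks $\mathbf D^\ast$, not $\mathbf V$. An optimal $\mathbf V$-path can thread between slits at all scales in a way no discrete path in $\randdual$ can imitate, because in $\randdual$ one is forced to use the up-/down-tree edges, whose metric is $\mathbf D^\ast$. So ``discretise the continuous path'' is exactly the step that fails without a further, genuinely probabilistic, ingredient.

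The paper's resolution is to decouple the two bounds: the easy upper bound is proved against the \emph{other} pseudo-metric $\mathbf D^\ast$ (Proposition~\ref{prop:upbound}), using that $\randdual$ contains the up- and down-trees as subgraphs and that $\mathbf D^\ast$ is an infimum over \emph{finite} chains of $\mathbf D^{\up}/\mathbf D^{\down}$ steps, which do discretise trivially. All the difficulty is then pushed into a purely continuum statement, Theorem~\ref{thm:V=D*}: $\mathbf V=\mathbf D^\ast$ almost surely for the stable excursion. That theorem is proved by an $\eps$-scale exploration encoding paths into words in $\{\tth,\ttb\}$, large-deviation estimates on the ``undershoot'' variables $\chi_k$ (Propositions~\ref{prop:chi-dom}--\ref{prop:chi-sum}), and a Gwynne--Miller-style bootstrap to drive the multiplicative constant down to~$1$. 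This is the argument your proposal is missing; the decrease-point/Bertoin input you mention at the end is used in the paper only for the point-identification Theorem~\ref{l:vd0}, not for Theorem~\ref{thm:conv}.
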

Here $c\cdot G$ means that graph distances in $G$ are
multiplied by the constant $c$. For background on the
Gromov--Hausdorff topology, we refer to \cite{burago}. A
simulation of a large $\randdual$ is shown in 
Figure~\ref{f:simulation}.  

\subsection{Shredded spheres as a metric gluing of two trees} 

A precise definition of $\shred_\alpha$ is given in 
Section \ref{sec:defshred}, but roughly speaking we construct
$\shred_\alpha$ as follows.  Start with an excursion 
$\ce=(\ce(t))_{t\in[0,1]}$ of
an $\alpha$-stable L\'evy process with no negative jumps.    As it is well-known, one may
construct a real tree by ``gluing the underside'' of $\ce$
(identifying points at the same height which are not separated by a
local minimum), see \cite{duquesne:2008}.  In the same way, another real tree may be obtained by
applying the same construction to the reflected excursion
$(-\ce(-t)+\max\ce)_{t\in[0,1]}$ 
(with cyclically adjusted
time-paramterization).  We obtain the shredded sphere $\shred_\alpha$
by applying both identifications \emph{simultaneously};  that is, we
``glue''  both the underside and overside of $\ce$. In a more precise sense, we endow $[0,1]$ with a random pseudo-metric $ \mathbf{D}^\ast$ obtained as 
\[ 
 \mathbf{D}^{\ast}(s,t) = \inf \left\{ \sum_{i=1}^{n}  \mathbf{D}^{\up}(a_{i},b_{i})+  \mathbf{D}^{\down}(b_{i},a_{i+1}): 
{s=a_{1},b_{1},  \dotsc, a_{n}, b_{n}, a_{n+1}=t}\right\},
\] 
where $ \mathbf{D^\up}$ and $ \mathbf{D}^\down$ are the psuedo-metrics
on $[0,1]$ coding for the tree `below' and `above' $ \ce$.  Such type
of constructions is classic in the theory of random maps, e.g. the
Brownian map \cite{legall:2013} or in the mating of trees theory
\cite{mating-of-trees}.  But it is usually
challenging to show that $ \mathbf{D}^\ast$ appears as the limiting
metric of rescaled discrete models: in the case of the Brownian map,
this corresponds to the breakthroughs of Le Gall \cite{legall:2013} and
Miermont \cite{miermont:2013}. Also,
contrary to the case of random planar maps, our models do not possess
re-rooting invariance which was crucial in the aforementioned
results. Our salvation will come from noting that the large multimers
create impassable barriers in the scaling limit.  More
precisely, we can define another pseudo metric 
$ \mathbf{V}(s,t)$ on $[0,1]$ obtained by the minimal vertical 
total variation to
go from $s$ to $t$ by avoiding the jumps of $ \ce$. Our main 
result is that $ \mathbf{V}= \mathbf{D}^\ast$ almost surely for 
$\ce$, see Theorem \ref{thm:V=D*} (this is not a deterministic
statement, see Remark \ref{ex:counterex} for a counterexample). 
With these ingredients at hands, the proof of Theorem \ref{thm:conv} is
rather easy. We also show (Theorem \ref{l:vd0}) that 
\[
\mathbf{V}(s,t)=0 \quad \iff \quad  \mathbf{D}^\up(s,t)=0   
\mbox{ or }   \mathbf{D}^\down(s,t)=0,
\]
thus characterizing the point
identifications in $ \shred_\alpha$. Our methodology, based on the use
of large faces to control the metric, will be applied to the study of
stable maps \cite{legall:2011} in a forthcoming work \cite{curien:2021}.  
A key step is a powerful bootstrapping argument introduced in a
different context by Gwynne and Miller \cite{gwynne-miller}.

We also study various properties of the $\alpha$-stable shredded
sphere which are connected to fine properties of the $\alpha$-stable
L\'evy process. For example, the local behavior of such processes
entails that $ \shred_\alpha$ is almost surely of Hausdorff dimension
$\alpha$ (Proposition \ref{prop:dim}).
Our proof of this closely mimics the case of the
$\alpha$-stable looptrees \cite{curien:2014}. 
Furthermore, our
shredded spheres $  \shred_\alpha$ possess large faces which are the
image of the jumps of $ \ce$ in our construction. We show using the
work of Bertoin \cite{bertoin:1994} on increase points of L\'evy
processes that there are large faces which touch each other. 
Unfortunately, as in the
case of the mating-of-trees theory of Duplantier, Miller and Sheffield
\cite[Question 11.2]{mating-of-trees} we were unable to 
decide whether the graph of faces in $ \shred_\alpha$ is connected (two
  faces are adjacent if they touch each other).  
More generally we leave the following question open:
\begin{open} Is it possible to characterize the topology of $ \shred_{\alpha}$?  
\end{open}
\medskip

\textbf{Acknowledgments:} 
The first author acknowledges support from
\emph{Vetenskapsr{\aa}det}, Grants 2015-0519 and 2019-04185,
and is grateful for the
hospitality at Université Paris-Sud Orsay.
The second author acknowledges supports from
ERC ``GeoBrown'' as well as the grant \texttt{ANR-14-CE25-0014} ``ANR
GRAAL''. The third author acknowledges support from the Icelandic
Research Fund, Grant Number: 185233-051, and is grateful for the
hospitality at Université Paris-Sud Orsay and at Chalmers. 
We also thank the two anonymous referees for crucial remarks that
helped clarifying the paper.

\tableofcontents

\section{Discrete encodings and bounds on distances}

In this section we recall the coding of (random) multimer configurations using
(random) walks. This will motivate our definition and construction of
the shredded sphere $\shred_\alpha$ using the normalized excursion
$\ce$ of an $\alpha$-stable spectrally positive L\'evy process. We
refer the reader to \cite{bertoin:1996,Cha97} for the construction and
basic properties of (excursions of) stable L\'evy processes.  We shall
also prove lower and upper bounds for asymptotic distances in
$\randdual$ reducing the proof of  Theorem \ref{thm:conv} to 
properties of $ \ce$. 

\subsection{Coding by a walk}\label{sec:walk}
Let $\randmult$ be our random multimer configuration with $n$ vertices
and  $\randdual$ be its vertical dual. We associate with $\randmult$ a
random walk $ \mathcal{E}_n$ as follows. For each multimer $m \in
\randmult$ 
except for the root, 
draw a horizontal segment from its bottom point going to
the left and stop at the first vertex encountered (blue lines in
Figure~\ref{f:walk}). 
The bottom point of the root is 
instead connected   vertically
to $\varnothing^\up$, an additional
point below the lowest time slice. This construction results in a
tree. 
As in  \cite{difrancesco:2002}
we now perform the left-to-right contour walk around this tree,
starting at the bottom of the root,
going up the multimers in single steps (of sizes 
$k \in \{0,1,2,\dotsc \}$) and going down by steps of $-1$ when needed
(horizontal blue edges are traversed without taking a step in the
corresponding walk). 
The resulting countour function, 
 with time indexed by $\{0,1,\dotsc,n+1\}$,
 is denoted by $\ce_n$;  it starts at level $0$ and 
stops at level $-1$ at
time $n+1$.  We extend the domain of  $\ce_n$ to the interval $[0,n+1]$
by linear interpolation between the integer points.
The construction, which is a bijection between multimer
configurations and excursions of walks whose jumps are in $\{-1,0,1,2,3,...\}$, 
should be clear on  Figure~\ref{f:walk}.  
Note that ${\mathcal E}_n$ records the vertical distances to
the lowest point of $ \randmult$.

\begin{figure}[h!]
 \centerline{\scalebox{0.80}{\includegraphics{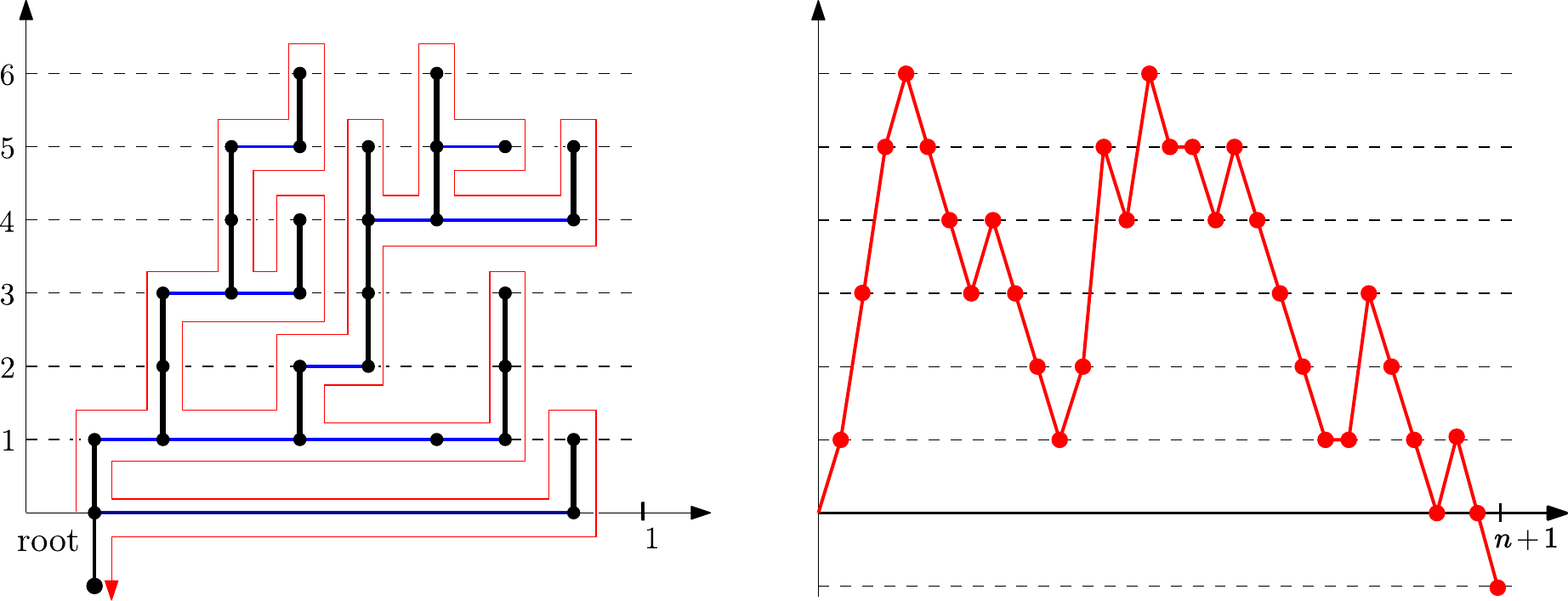}}} 
 \caption{Illustration of the coding of a multimer configuration by an
   excursion of a random walk whose negative steps are of size $-1$
   only.} \label{f:walk} 
 \end{figure}

The underlying multimer configuration can be 
identified with the closed set formed by 
 \begin{eqnarray} \label{def:slits}
\mathrm{Slits}_n=
\bigcup_{0\leq k\leq n-1}
\{k+1\}\times 
\big[\mathcal{E}_n(k),
\mathcal{E}_n(k+1)\big] \subset [0,n] \times \mathbb{R}_+.
  \end{eqnarray}
(Only multimers contribute to the union since $[j,j-1]=\varnothing$.)
Notice that we consider here the cylinder of width $n+1$ whereas we
considered before a cylinder of width $1$, but this horizontal
dilation is irrelevant in what follows. In this representation, the
vertices of $\randdual$ except for the top and 
bottom vertices can be identified with $(t, \mathcal{E}_{n}(t))$
for those times $t \in \mathcal{R}_{n}$ where  
\begin{equation} \label{eq:Rncor}
\mathcal R_n=\{k+\tfrac12: 
{\mathcal E}_n(k+1)={\mathcal E}_n(k)-1\} \cap [0,n+1].
\end{equation}

\begin{figure}[h!]
 \centerline{\scalebox{0.87}{\includegraphics{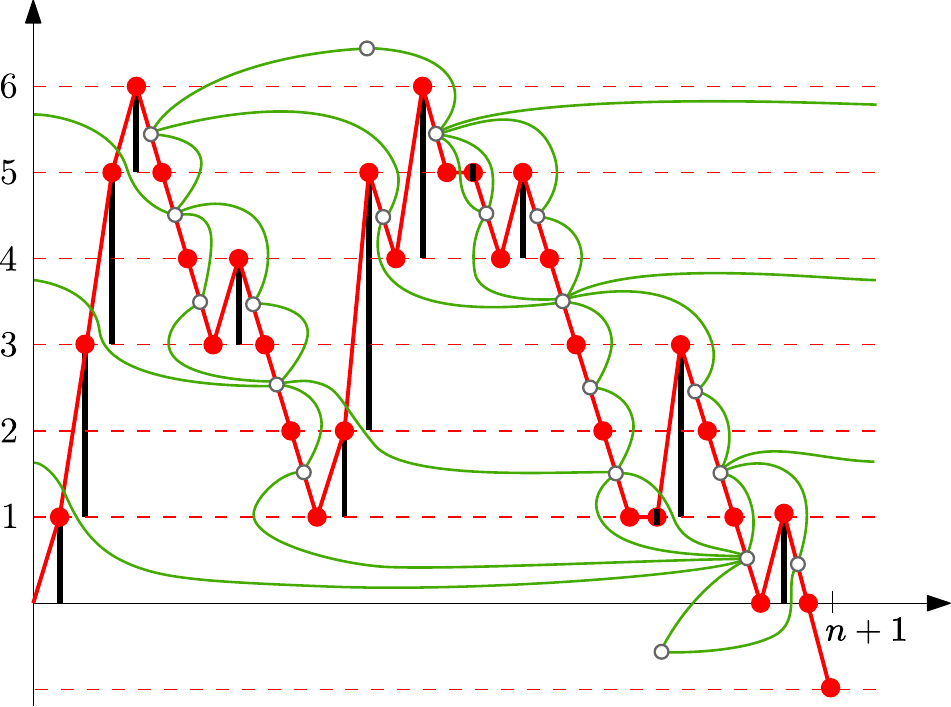}}} 
 \caption{The graph $\mathcal{G}_n^\mu$ (green) drawn on the cylinder
   $[0,n+1]\times[-1, \infty)$, with vertical sides identified, together
   with the excursion $\mathcal{E}_n$ (red) and $\mathrm{Slits}_n$ (black).}
\label{fig:G-walk} 
 \end{figure} 
 It is also more convenient to start our cylinder at height $-1$ to
 include the vertex $\varnothing^{ \mathrm{u}}$, and we shall always
 imagine that the vertices $ \varnothing^{ \mathrm{u}}$ and $
 \varnothing^{ \mathrm{d}}$ are respectively placed at height $-1/2$
 and $ \mathrm{sup} \mathcal{E}_{n}+1/2$. See Figure \ref{fig:G-walk}
 for an illustration.

It will be convenient to associate an element of $\mathcal R_n$
to each $t\in[0,n]$,
 hence we define
\begin{equation}\label{pwc}
t^\circ=\left\{\begin{array}{ll}
\max\{r\in\mathcal R_n: r\leq t\}, &
\mbox{if } t\geq\min \mathcal R_n, \\
\max \mathcal R_n, & \mbox{otherwise}.
\end{array}\right.
\end{equation}
Note that if $t \in \mathcal{R}_{n}$ then $ \mathcal{E}_{n}(t) +
\frac{1}{2}$ is the graph distance between $\varnothing^{ \mathrm{u}}$
and the corresponding vertex in $ \randdual$ via \eqref{eq:Rncor}.
We also consider the piecewise
constant excursion
$\overline{\mathcal{E}}_n(t):= \mathcal{E}_n(t^\circ)$.
Note that $\overline{\mathcal{E}}_n(\cdot)$ is càdlàg and agrees with
$\mathcal{E}_n(\cdot)$ on the set $\mathcal R_n$, and that 
$\overline{\mathcal{E}}_n(t)=\tfrac12$ for $0\leq t<\min \mathcal R_n$.

\subsubsection{Law of the walk under Boltzmann measures} 
\label{sec:DG} 

Let us compute the law of $ \mathcal{E}_{n}$ in the
case when the multimer configuration has a ``Boltzmann'' law with a
generic weight sequence. We do so in order to shed light on the
normalization conditions used in the introduction. If $
\mathbf{q}=(q_{k} : k \geq 1)$ is a sequence of non-negative weights
(not necessarily of sum $1$) we can define a probability distribution
$ \mathbb{P}^{ \mathbf{q}}_{n}$ on the set of multimer configurations
of size $n$ by putting 

\begin {equation} \label{eq:defBoltzmannbis}
 \mathbb{P}_n^{ \mathbf{q}}(M) \propto \prod_{m \text{ multimer in } M} q_{|m|}, \quad \mbox{ for each }M \in \mathcal{M}_{n},
\end {equation} 
where $|m|$ denotes the length (number of edges) of a multimer. Notice that since we restrict to configurations with $n$ vertices in total, for any $\lambda, \xi >0$ we have 
$$\prod_{m \text{ multimer in } M} q_{|m|} = (\lambda \xi)^{-n} \prod_{m \text{ multimer in } M} \left(\xi q_{|m|} \lambda^{|m|+1}\right) \cdot \xi^{|m|}.$$ We shall assume that $\lambda,\xi>0$ are chosen so that $ (\mu_{k} : k \geq -1)$ defined by 
$$\mu_{k} =\xi q_{k} \lambda^{k+1},  \mbox{ for } k \geq 0, \quad
\mbox{and} \quad \mu_{-1} = \xi,$$ is a probability measure. By doing
so, we are back to the setting \eqref{eq:defBoltzmann} used in the
introduction. The additional freedom on $(\lambda,\xi)$ allows us, in
generic situations, to further fix the mean of $\mu$ to be $1$ (see \cite[Section 4]{janson:2012} for further details in the equivalent context of simply generated trees).  Now
it is easy to see that the push forward of this distribution on the
set of excursion paths by the above bijection gives the law of a
$\mu$-random walk conditioned to first hit $-1$ at time $n+1$. Our
standing assumptions \eqref{stable} is then to ask that the step
distribution $\mu$ of that random walk is critical and in the domain
of attraction of the $\alpha$-stable law. 

Our model is related to that of 
Di Francesco and Guitter 
 \cite{difrancesco:2002} as follows:
setting $\mu_0=0$ and $t_i=\mu_i(\mu_{-1})^i$ for $i\geq1$,
we have
\begin{equation}
\sum_{n\geq 0} Z_n=\lim_{T\to\infty} Z_T(\{t_i\}),
\end{equation}
where the right-hand-side uses the notation of
\cite[Section 2.2]{difrancesco:2002} for the partition-function of
multimer-configurations on $T$ time-slices.  Thus, we restrict the
number of vertices $n$ rather than the number of time-slices $T$.

 \subsection{A first scaling limit} The above coding of multimer
 configurations by walks enables us to define right away the scaling
 limit for the coding excursion which will be our entry door to the
 continuous world. \medskip

 By \emph{$ \alpha$-stable L\'evy process} we mean a  stable spectrally
positive L\'evy process $X$ of index $\alpha$, normalized so that for
every $\lambda>0$, 
$$\E[\exp(-\lambda X_t)]=\exp(t \lambda^\alpha).$$
 Equivalently, the
L\'evy measure $\Pi$ of $X$ is supported by $ \mathbb{R}_+$ and we
have 
\begin{eqnarray} 
\Pi([r, \infty))&=& | \Gamma(1-\alpha)|^{-1}r^{-\alpha}, \quad \mbox{ for } r >0.   \label{eq:levymeasure} \end{eqnarray}
The trajectories of $X$ a.s.\ belong to the Skorokhod space $D(\R_+, \R)$ of
right-continuous with left limits (c\`adl\`ag) functions, endowed with
the Skorokhod topology (see \cite[Chap. 3]{billingsley}).  The dependence of
$X$ in $\alpha$ will be implicit in the whole paper.  We consider (see
\cite{Cha97}) the normalized excursion $\ce$ 
of $X$ above its infimum. 
Notice that 
$\ce$ is a.s.\,a random c\`adl\`ag function on $[0,1]$ such
that $ \ce(0)=  \ce(1)=0$ and $ \ce(s)>0$ for
every $s \in(0,1)$.

 \begin{proposition}[Scaling limit of the coding
   excursion]   \label{prop:firstscaling}
If $\mu$ is centered and satisfies \eqref{stable} then the random
c\`adl\`ag excursion $ \overline{\mathcal{E}}_n$ associated to
$\randmult$ by the above construction satisfies  
 \begin{eqnarray}
  \label{eq:convcontour}
\big(n^{ -1/\alpha} \cdot \overline{\mathcal E}_n(nt)\big)_{t\in[0,1]}
\to \big(\ce (t)\big) _{t\in[0,1]}
  \end{eqnarray}
in the sense of weak convergence with respect  to the Skorokhod metric
on $\mathbb D[0,1]$.
 \end{proposition}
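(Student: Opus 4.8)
The statement is a conditioned functional limit theorem: the walk $\mathcal{E}_n$ is (by the bijection in Section~\ref{sec:DG}) a centered $\mu$-random walk conditioned to first hit $-1$ at time $n+1$, and $\mu$ is in the domain of attraction of the spectrally positive $\alpha$-stable law. The plan is to invoke the now-standard machinery for scaling limits of conditioned random walks / Galton--Watson excursions in the stable regime, and then transfer the convergence from $\mathcal{E}_n$ (with linear interpolation) to $\overline{\mathcal{E}}_n$ (the càdlàg piecewise-constant version).

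First I would record the unconditioned convergence: by the classical Skorokhod limit theorem, since $\mu$ is centered and satisfies \eqref{stable}, the rescaled walk $(n^{-1/\alpha} S_{\lfloor nt\rfloor})_{t\ge 0}$ converges in $D(\R_+,\R)$ with the Skorokhod topology to the $\alpha$-stable spectrally positive Lévy process $X$ normalized as in \eqref{eq:levymeasure} --- the normalization constant $|\Gamma(1-\alpha)|^{-1}$ in \eqref{stable} is precisely chosen to match \eqref{eq:levymeasure}, so no extra slowly varying correction appears. Second, I would pass to the conditioned object. The walk underlying $\mathcal{E}_n$ is conditioned to stay positive until time $n+1$ and hit $-1$ there; equivalently, reading the excursion from the other end, it is the excursion of a left-continuous (downward skip-free) walk of length $n+1$. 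For such conditioned walks the convergence of the rescaled excursion to the normalized excursion $\ce$ of $X$ above its infimum is exactly the content of the Duquesne--Le Gall type invariance principle; concretely I would cite Duquesne's theorem on the convergence of conditioned Galton--Watson trees (or equivalently \cite{Cha97} for the direct random-walk excursion statement), after noting that a multimer configuration with $n$ vertices corresponds, via the contour/ Łukasiewicz path, to a Galton--Watson tree with offspring law $\nu(k)=\mu(k-1)$, $k\ge 0$, conditioned to have $n+1$ vertices, whose offspring law is critical and in the domain of attraction of the $\alpha$-stable law. This gives
\[
\bigl(n^{-1/\alpha}\,\mathcal{E}_n(nt)\bigr)_{t\in[0,1]} \xrightarrow[n\to\infty]{(d)} \bigl(\ce(t)\bigr)_{t\in[0,1]}
\]
in $D[0,1]$.

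Third, I would upgrade from $\mathcal{E}_n$ to $\overline{\mathcal{E}}_n$. By \eqref{pwc} and its following lines, $\overline{\mathcal{E}}_n(t)=\mathcal{E}_n(t^\circ)$ where $t^\circ$ is the largest element of $\mathcal{R}_n$ below $t$; thus $\overline{\mathcal{E}}_n$ and $\mathcal{E}_n$ differ only on the ``ascending'' parts of the path (where the walk makes a nonnegative jump and $\mathcal{E}_n$ linearly interpolates while $\overline{\mathcal{E}}_n$ stays flat), and on the initial segment $[0,\min\mathcal{R}_n]$. On a single jump of size $k$, the two functions differ pointwise by at most $k$; after rescaling by $n^{-1/\alpha}$ this difference is controlled by the size of the largest jump of the rescaled walk times a vanishing time-width, and the jump sizes are uniformly tight because of the convergence already established. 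More precisely, since the limit $\ce$ is càdlàg and the Skorokhod convergence holds, for any $\eps>0$ only finitely many jumps of $\mathcal{E}_n$ exceed $\eps n^{1/\alpha}$ (with high probability), and the contribution of the remaining small jumps to $\sup_t n^{-1/\alpha}|\mathcal{E}_n(nt)-\overline{\mathcal{E}}_n(nt)|$ is $O(\eps)$. A clean way to organize this: the piecewise-constant process $n^{-1/\alpha}\overline{\mathcal{E}}_n(nt)$ is obtained from the continuous interpolation by a deterministic ``flattening'' operation that, on the space of càdlàg paths, is continuous at every path whose jumps are well separated --- and $\ce$ a.s.\ has this property. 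Hence $\overline{\mathcal{E}}_n$ and $\mathcal{E}_n$ have the same Skorokhod scaling limit $\ce$, which is \eqref{eq:convcontour}.

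The main obstacle is the second step: justifying that the conditioning (stay positive and hit $-1$ at time $n+1$) survives the scaling limit and produces exactly the normalized excursion $\ce$ of $X$ above its infimum, rather than some other conditioned process. This is where the skip-free (left-continuous) structure of the walk --- negative steps only of size $-1$ --- is essential, since it is precisely this structure that makes ``first hitting $-1$ at time $n+1$'' the correct discrete analogue of an excursion, and it is the hypothesis under which the Duquesne / Chaumont excursion invariance principle applies cleanly. I would also need to check the mild technical point that $\min\mathcal{R}_n = O_{\mathbb{P}}(1)$ (equivalently, the height of the first ascent is tight), so that the initial flat segment $[0,\min\mathcal{R}_n]$ of $\overline{\mathcal{E}}_n$ has negligible rescaled length; this follows from the fact that the first step of the unconditioned walk has a fixed distribution and the conditioning does not blow it up.
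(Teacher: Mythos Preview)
Your proposal is correct and follows essentially the same route as the paper: identify $\mathcal{E}_n$ as a $\mu$-random walk conditioned to first reach $-1$ at time $n+1$, invoke the Duquesne-type invariance principle for conditioned stable excursions (the paper cites \cite[Lemmas~4.5--6]{duquesne:2003}), and then note that passing from the linearly interpolated $\mathcal{E}_n$ to the piecewise-constant $\overline{\mathcal{E}}_n$ requires only a minor adaptation. The paper's proof is terser---it just says an ``adaptation of the arguments'' in Duquesne handles the $t^\circ$-interpolation---whereas you spell out the flattening/tightness argument more explicitly, but the substance is the same.
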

 \begin{proof} 
By the discussion at the end of the last section, 
the law of $ \mathcal{E}_n$ is that of a random walk with
i.i.d.~increments of distribution $\mu$ conditioned on reaching $-1$
for the first time at $n+1$.  The result then follows from  an
adaptation of the arguments in 
\cite[Lemmas~4.5-6]{duquesne:2003} 
(to deal with the interpolation between integer-points
using the random function $t^\circ$ in \eqref{pwc}). 
\end{proof} 

Proposition \ref{prop:firstscaling} already gives the order of the
diameter of  $\randdual$, 
since $ \overline{\mathcal E}_n+1/2$ records the distances to
$\varnothing^{ \mathrm{u}}$ in $\randdual$.  Using Skorokhod's embedding
theorem, we can suppose that our multimer configurations $\randmult$
are coupled in such a way that the convergence of
Proposition \ref{prop:firstscaling} 
is almost sure.   This then means
that there are (random) continuous increasing bijective
functions $\psi_n: [0,1] \to [0,n]$ 
so that  
 \begin{eqnarray} \label{eq:cvinfty} 
(n^{-1/\alpha}\cdot \overline{ \mathcal{E}}_n(
   \psi_n (t)) : t \in [0,1]) 
\xrightarrow[n\to\infty]{} 
(\ce(t) :  t\in [0,1])  
\end{eqnarray} 
almost surely for the supremum norm on
 $[0,1]$ and so that 
$\|\tfrac1n \psi_n - \mathrm{Id}\|_\infty \to  0$. 
In the following we shall write $\widetilde{ \mathcal{E}}_n  = n^{-1/\alpha}
\overline{ \mathcal{E}}_n \circ \psi_n$.

Our goal is to use \eqref{eq:cvinfty} to show almost
sure convergence of the rescaled graphs $\randdual$,
thereby establishing our main result Theorem \ref{thm:conv}.
 The road will be quite long and we shall start with discrete
 upper and lower bounds on the distances in $ \randdual$ which pass to the limit.
 
 \subsection{Two trees and an upper bound}

Recall the encoding of $\randmult$ by $ \mathcal{E}_n$
and let us denote
 by $\varnothing^\down$ and $\varnothing^\up$ the top and bottom
 vertices of $\randdual$ respectively.  We define two trees, which we
 call the \emph{up-tree} (rooted at $\varnothing^\up$) and
 \emph{down-tree} 
(rooted at $\varnothing^\down$), as follows, see
Figure~\ref{fig:twotrees}.
The up-tree is obtained by connecting each vertex of $\randdual$
(except $\varnothing^\down$)
to its leftmost neighbor in the time-slice below, and the down-tree is
obtained by connecting each vertex of $\randdual$
(except $\varnothing^\up$) to its rightmost neighbor in the time-slice
above.  Both trees are subgraphs of $\randdual$ but their union does
not yield $\randdual$ in general. 

\begin{figure}[h!]
 \begin{center}
  \centerline{\scalebox{0.80}{\includegraphics{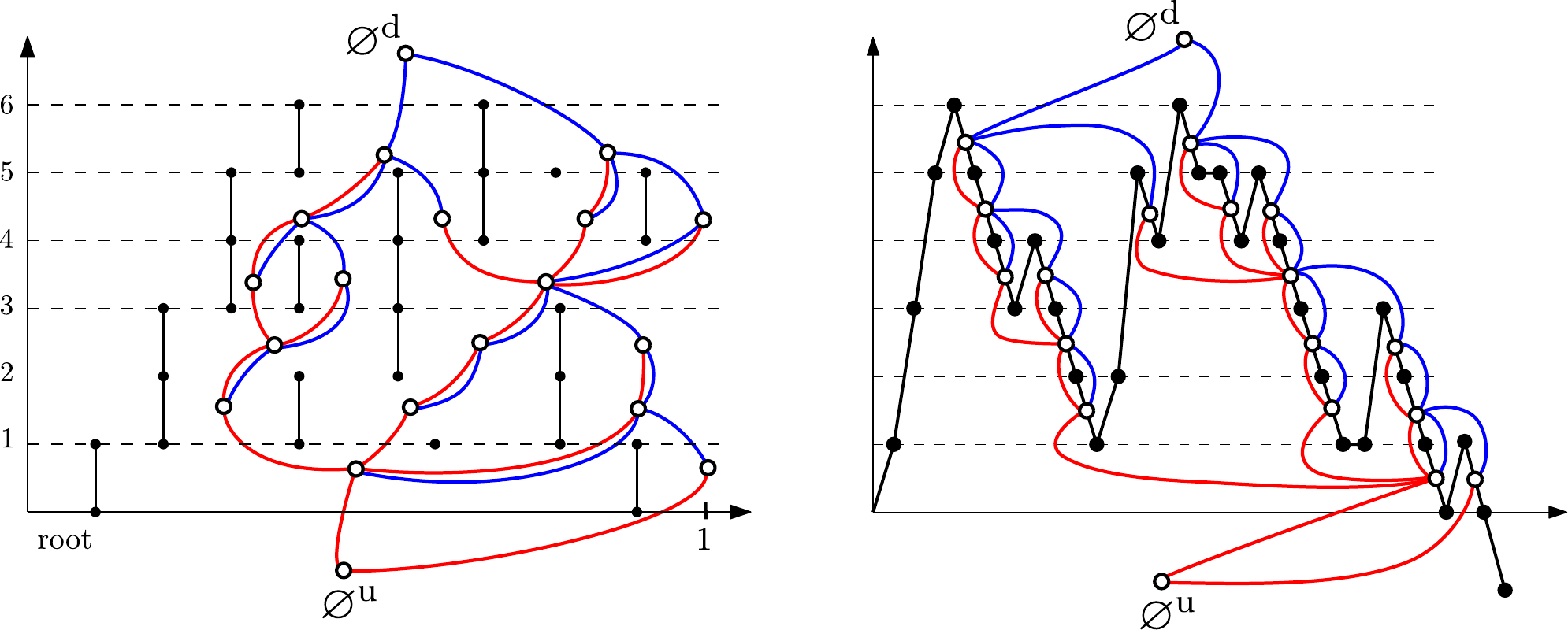}}}
 \caption{Illustration of the two trees lying inside $ \randdual$, the
   up-tree is in red and is rooted at the bottom vertex
   $\varnothing^\up$, the down-tree is in blue and is rooted at the
   top vertex $\varnothing^\down$. Both trees are subgraphs of $
   \randdual$ and the distances in those trees are easily expressed
   using $ \mathcal{E}_n$. On the right it is shown how to recover the
   trees from the walk which corresponds to the multimer
   configuration.   \label{fig:twotrees}} 
 \end{center}
 \end{figure}

Recall from the last section that the vertices of $\randdual$,
  except for $\varnothing^{ \mathrm{u}}$ and $\varnothing^{
    \mathrm{d}}$, are in correspondence with $ \mathcal{R}_{n}$.
Distances within these trees can be expressed using the excursion 
$\mathcal{E}_n$:
 if $u,v \in \mathcal{R}_n$ the distances $D^\up(u,v)$ (resp.~$
 D^\down(u,v)$) in the up-tree (resp.~down-tree) between the vertices
 associated with $u$ and $v$ in $\randdual$ are given by
\begin{equation}
\begin{split}
D^\up_n(u,v)&=\mathcal{E}_n(u)+\mathcal{E}_n(v)-
2\min_{w\in[u\wedge v,u\vee v]} \mathcal{E}_n(w)+1-\delta_{u,v},\\
D^\down_n(u,v)&=1-\delta_{u,v}+
2\min\big\{\max_{w\in[u\wedge v,u\vee v]} \mathcal{E}_n(w),
\max_{w\in[0,u\wedge v]\cup [u\vee v,n]} \mathcal{E}_n(w)\big\}
-\mathcal{E}_n(u)-\mathcal{E}_n(v).
\end{split}
\end{equation}
(The definition of $D^\down$ can be interpreted in terms of the
``mirror'' excursion of $\mathcal{E}_n$, we explain this in the continuous
setting below, see \eqref{eq:mirrored}.)

Let us write $D_n(u,v)$ for the distance in $ \randdual$
between (the vertices associated with) $u,v \in
\mathcal{R}_n$. Clearly we have both $D_n(u,v) \leq D_n^\up(u,v)$ and
$ D_n(u,v) \leq D_n^\down(u,v)$. In order to see $D_n, D_n^\up$ and
$D_n^\down$ as pseudo-distances on $[0,1]$, similar to what we did for
$ \mathcal{E}_n$ just after Proposition \ref{prop:firstscaling}, we
shall introduce for $s,t \in[0,1]$ 
\[ 
\widetilde{D}_n(s,t) = D_n(
\psi_n(s)^\circ, \psi_n(t)^\circ), \quad
\widetilde{D}^{\up/\down}_n(s,t) = D^{\up/\down}_n( \psi_n(s)^\circ,
\psi_n(t)^\circ),
\]
where $\psi_n$ is the time-change in \eqref{eq:cvinfty}. 
With these notations, the distance metric in  $ \randdual$
between vertices (excepting $\varnothing^\up$ and $\varnothing^\down$)
is given by the quotient of $[0,1]$ by $
(\widetilde{D}_n =0)$ endowed with the projection of $
\widetilde{D}_n$. 

We now move on to the continuous setting and define
accordingly  for $s,t \in [0,1]$ 
\[  
\mathbf{D}^{\up}(s,t) = \boldsymbol{\ce}(s)+ \ce(t) - 
2 \inf_{u \in [s \wedge t, s \vee t]} \ce(u).
\]
To help the reader understand the next definition notice that we
always have $\inf_{u \in [0,s \wedge t] \cup [s \vee t,1]}
\ce(u)=0$ and so we could have replaced $\inf_{u \in [s \wedge
t, s \vee t]} \ce(u)$ by $\max\{\inf_{u \in [s \wedge t, s
\vee t]} \ce(u), \inf_{u \in [0,s \wedge t] \cup [s \vee t,1]}
\ce(u)\}$ in the last definition. The second pseudo distance
is the same as the first but for the ``mirror'' excursion 
$-\ce(-\cdot)+\max \ce$ and is given for $s,t \in [0,1]$ by

\begin{equation}\label{eq:mirrored}  
\mathbf{D}^{\down}(s,t) = 2\min \left\{ \sup_{u \in [s \wedge t, s
    \vee t]} \ce(u), \sup_{u \in [0,s \wedge t] \cup [s \vee t,1]}
  \ce(u)\right\} - \ce(s) - \ce(t).
\end{equation}
It is easy to check using \cite[eq. (3)]{duquesne:2008} 
that $ \mathbf{D}^{\up}$ and
$ \mathbf{D}^\down$ are pseudo distances. 
Furthermore the quotient metric spaces
$([0,1]/\{ \mathbf{D}^i=0\}, \mathbf{D}^{i})$ for 
$i \in \{\up, \down\}$ 
are both real trees, which 
we refer to as the trees coded by $ \ce$ and by 
$-\ce(-\cdot)+\max \ce$, respectively. In a different context, such
trees were consdidered by Lambert \cite{lambert:2010} as special cases
of random real trees which belong to the family of so-called
\emph{splitting trees}. 
In our case we   ``glue these trees
together'', that is we consider $ \mathbf{D}^{\ast}$ the largest pseudo distance on
$[0,1]$ that is both smaller than $ \mathbf{D}^{\up}$ and 
$\mathbf{D}^{\down}$ viz 
\begin{equation}\label{eq:d-star}
 \mathbf{D}^{\ast}(s,t) = \inf \left\{ \sum_{i=1}^{k}  \mathbf{D}^{\up}(a_{i},b_{i})+  \mathbf{D}^{\down}(b_{i},a_{i+1}): 
{s=a_{1}, b_{1},  \dotsc, a_{k}, b_{k}, a_{k+1}=t}\right\}.
\end{equation}
Our asymptotic upper bound on the distances in $ \randdual$ (similar
to the bound $D \leq D^*$ in the case of the Brownian map, see
\cite[Proposition 3.3]{legall:2007}) reads now as follows:
\begin{proposition}[Upper bound on distances] \label{prop:upbound}
Almost surely, under the coupling in \eqref{eq:cvinfty}
\[ 
\limsup_{n \to \infty} \sup_{s,t \in [0,1]} \left(n^{-1/\alpha}
  \widetilde{D}_n(s,t) - \mathbf{D}^\ast(s,t)\right) \leq 0.
\]
\end{proposition}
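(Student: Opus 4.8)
The plan is to transfer the trivial discrete inequality $D_n \le D_n^{\up}$ and $D_n \le D_n^{\down}$ through the almost-sure coupling \eqref{eq:cvinfty}, and then to use the chaining (concatenation) structure of $\mathbf{D}^{\ast}$ to bootstrap these two one-step bounds into the full bound against $\mathbf{D}^{\ast}$. First I would establish the ``one-step'' asymptotic bounds: almost surely under \eqref{eq:cvinfty},
\[
\limsup_{n\to\infty}\ \sup_{s,t\in[0,1]}\Bigl(n^{-1/\alpha}\widetilde D_n(s,t)-\mathbf{D}^{\up}(s,t)\Bigr)\le 0
\]
and the analogous statement with $\down$ in place of $\up$. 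This is the technical heart of the argument. It should follow from the explicit formulas for $D_n^{\up}, D_n^{\down}$ in terms of $\mathcal{E}_n$ together with the uniform convergence $\widetilde{\mathcal E}_n\to\ce$ from \eqref{eq:cvinfty}: the point is that $D_n^{\up}(u,v)$ is, up to the additive $+1-\delta_{u,v}$ term (which is $O(1)=o(n^{1/\alpha})$), a continuous functional (sum of two evaluations minus twice an infimum over an interval) of the rescaled excursion, and infima over intervals are stable under uniform convergence. One must be a little careful because $\widetilde D_n(s,t)$ is defined via $\psi_n(s)^{\circ},\psi_n(t)^{\circ}$ rather than directly at $s,t$, but since $\|\tfrac1n\psi_n-\mathrm{Id}\|_\infty\to 0$ and the discretisation map $t\mapsto t^{\circ}$ moves a point by $o(n)$ in the $n$-cylinder, the arguments converge uniformly; for $\mathbf{D}^{\down}$ one additionally uses the observation, made just before \eqref{eq:mirrored}, that one may freely insert the $\inf$/$\sup$ over the complementary arc since it equals $0$ (for $\up$) resp.\ $\max\ce$ (for $\down$). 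The genuinely delicate point here is uniformity over all $s,t$ simultaneously: I would handle it by noting that the functionals $(f,s,t)\mapsto f(s)+f(t)-2\inf_{[s\wedge t,s\vee t]}f$ are uniformly continuous in $f$ for the sup-norm, uniformly in $(s,t)$, so a uniform bound on $\|\widetilde{\mathcal E}_n-\ce\|_\infty$ immediately yields a uniform bound on the difference of distances.

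Next, from $D_n(u,v)\le D_n^{\up}(u,v)\wedge D_n^{\down}(u,v)$ one gets, after rescaling and passing to the limit via the previous step,
\[
\limsup_{n\to\infty}\ \sup_{s,t}\Bigl(n^{-1/\alpha}\widetilde D_n(s,t)-\bigl(\mathbf{D}^{\up}(s,t)\wedge\mathbf{D}^{\down}(s,t)\bigr)\Bigr)\le 0 .
\]
To upgrade $\mathbf{D}^{\up}\wedge\mathbf{D}^{\down}$ to $\mathbf{D}^{\ast}$, I would exploit that $\widetilde D_n$, being (up to the projection) a genuine graph metric, satisfies the triangle inequality exactly: for any intermediate points $s=a_1,b_1,\dots,a_k,b_k,a_{k+1}=t$,
\[
\widetilde D_n(s,t)\le \sum_{i=1}^{k}\Bigl(\widetilde D_n(a_i,b_i)+\widetilde D_n(b_i,a_{i+1})\Bigr).
\]
Fix $\eps>0$ and a chain realising $\mathbf{D}^{\ast}(s,t)$ up to $\eps$ with some finite number $k$ of links; apply the one-step bound to each of the $2k$ summands and sum. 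The subtlety is that the chain, hence $k$, depends on $(s,t)$, so the ``error $\times 2k$'' is not obviously uniform in $(s,t)$. This is resolved by the standard argument: $\mathbf{D}^{\ast}$ and $\mathbf{D}^{\up}\wedge\mathbf{D}^{\down}$ are both continuous, $[0,1]^2$ is compact, and one only needs: given $\eps$, there is $n_0$ and a finite net of pairs such that for $n\ge n_0$ the inequality $n^{-1/\alpha}\widetilde D_n\le \mathbf{D}^{\ast}+\eps$ holds on the net with a chain-length bound uniform over the net; then continuity (plus the triangle inequality for $\widetilde D_n$ and for $\mathbf{D}^{\ast}$) propagates the bound from the net to all of $[0,1]^2$, with the $\eps$ absorbed.

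I expect the main obstacle to be precisely this passage from the finite but $(s,t)$-dependent chain in the definition \eqref{eq:d-star} of $\mathbf{D}^{\ast}$ to a bound that is uniform in $(s,t)$; the clean way around it is the compactness/net argument just sketched, using that $\widetilde D_n$ is itself a pseudometric so that the one-step bounds at finitely many reference pairs, combined with the triangle inequality, control all pairs. The remaining ingredients — the explicit formulas for $D_n^{\up/\down}$, the uniform convergence \eqref{eq:cvinfty}, the harmlessness of the $O(1)$ corrections and of the $t\mapsto t^{\circ}$ and $\psi_n$ time-changes, and the fact from \cite[eq.\ (3)]{duquesne:2008} that $\mathbf{D}^{\up},\mathbf{D}^{\down}$ (hence $\mathbf{D}^{\ast}$) are genuine continuous pseudometrics — are all routine once the uniformity is in hand.
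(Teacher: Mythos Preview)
Your overall strategy is the same as the paper's: establish that $n^{-1/\alpha}\widetilde D_n^{\up/\down}\to\mathbf D^{\up/\down}$ uniformly over $[0,1]^2$ (which, as you note, is immediate from $\|\widetilde{\mathcal E}_n-\ce\|_\infty\to0$ and the $4\|f-g\|_\infty$ Lipschitz bound); prove the pointwise inequality $\limsup_n n^{-1/\alpha}\widetilde D_n(s_0,t_0)\le\mathbf D^\ast(s_0,t_0)$ for each fixed pair by fixing a near-optimal chain in \eqref{eq:d-star} and using the triangle inequality for $\widetilde D_n$; and then pass from finitely many reference pairs to all of $[0,1]^2$.

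There is, however, one genuine gap in the last step. You assert that $\mathbf D^{\up}$, $\mathbf D^{\down}$ and $\mathbf D^\ast$ are \emph{continuous} on $[0,1]^2$ and invoke this to propagate from the net. This is false: $\ce$ has positive jumps, so for a jump time $\tau$ one has $\mathbf D^{\up}(s,\tau)\to\Delta\ce(\tau)>0$ and likewise $\mathbf D^{\down}(s,\tau)\to\Delta\ce(\tau)$ as $s\uparrow\tau$, and the same defect propagates to $\mathbf D^\ast$ (the two endpoints of a slit are not identified in $\shred_\alpha$). So a naive ``$[0,1]^2$ compact + continuous'' argument does not work. The paper handles this correctly: using the standard property of c\`adl\`ag functions (for any $\eps>0$ there is a partition $0=t_0<\dots<t_m=1$ with oscillation $<\eps$ on each $[t_{i-1},t_i)$), one shows that for any $s\in[0,1]$ there is a dyadic $s'\in\mathcal D(N)$ with both $\mathbf D^{\up}(s,s')\le\eps/4$ and $n^{-1/\alpha}\widetilde D_n^{\up}(s,s')\le\eps/4$ for all large $n$. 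In other words, one uses one-sided (right-)approximation via $\mathbf D^{\up}$ only, together with $\mathbf D^\ast\le\mathbf D^{\up}$ and $\widetilde D_n\le\widetilde D_n^{\up}$, rather than two-sided continuity. Once you replace your continuity claim by this c\`adl\`ag approximation, your argument goes through and coincides with the paper's.
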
 
\begin{proof} We start by noting from \eqref{eq:cvinfty} and by
    definition of $ \widetilde{D}_{n}^{\mathrm{u}/ \mathrm{d}}$ and $
    \mathbf{D}^{\mathrm{u}/ \mathrm{d}}$ that we have  
\[ 
n^{-1/\alpha} \cdot \widetilde{D}_{n}^{\mathrm{u}/ \mathrm{d}}
\xrightarrow[n\to\infty]{a.s.} \mathbf{D}^{\mathrm{u}/ \mathrm{d}},
\]
uniformly over $[0,1]^{2}$. 
Using \eqref{eq:cvinfty} and properties of c\`adl\`ag
  functions, it is easy to see that for any given $\eps>0$ there is an
  $N$ such that for all large enough $n$, for any $s \in [0,1]$, we
  can find $s' \in \mathcal{D}(N)=\{k2^{-N}:k=0,1,\dotsc,2^N\}$ such
  that 
\[
 n^{-1/\alpha} \widetilde{D}_n^{ \mathrm{u}}(s,s') \leq \varepsilon/4 
\quad \mbox{ and }\quad 
\mathbf{D}_n^{ \mathrm{u}}(s,s') \leq \varepsilon/4.
\] 
Using the bounds $ \widetilde{D}_n \leq \widetilde{D}_n^{ \mathrm{u}}$
as well as  $\mathbf{D}^* \leq \mathbf{D}^{ \mathrm{u}}$ and the
triangle inequality, it follows that for all large enough $n$ we have 
 \begin{eqnarray} \label{eq:finipoints}
\sup_{s,t \in [0,1]} \left(n^{-1/\alpha}
  \widetilde{D}_n(s,t) - \mathbf{D}^\ast(s,t)\right) \leq
\eps+
\max_{s,t \in \mathcal{D}(N)} \left(n^{-1/\alpha}
  \widetilde{D}_n(s,t) - \mathbf{D}^\ast(s,t)\right),
  \end{eqnarray}
 Now, for each fixed $s_0,t_0$ we claim that $ \limsup_{n \to \infty} n^{-1/\alpha} \widetilde{D}_n(s_0,t_0) \leq \mathbf{D}^*(s_0,t_0)$. To see this, for a given $\eta>0$, fix a finite sequence
$a_i,b_i$ such that
\[
\mathbf{D}^{\ast}(s_0,t_0) \geq 
\sum_{i=1}^{k}  \mathbf{D}^{\up}(a_{i},b_{i})+  \mathbf{D}^{\down}(b_{i},a_{i+1}) - \eta.
\]
Using that 
\[
\widetilde{D}_n(s_0,t_0) \leq 
\sum_{i=1}^{k}  \widetilde{D}_n^{\up}(a_{i},b_{i})+  
\widetilde{D}^{\down}_n(b_{i},a_{i+1})
\]
and the fact that
$n^{-1/\alpha} \widetilde{D}_n^{\up/\down}(s,t) \to  
\mathbf{D}^{\up/\down}(s,t)$ we deduce the claim by letting $\eta \to
0$. Since there is a finite number of pairs of points $(s,t) \in
\mathcal{D}(N)^2$, we can take the $\limsup$ and ${n \to \infty}$ in
\eqref{eq:finipoints} and get the statement of the proposition by
letting $ \varepsilon \to 0$. 
\end{proof} 

\subsection{Lower bound on distances} \label{sec:lowerbound}

We now turn our attention to a lower bound on distances in 
$\randdual$ for which the role of the large multimers becomes
crucial. 
The rough idea is that large multimers form obstacles when
travelling horizontally, allowing us to bound  distances
from below by a vertical height variation. 

Recall the setup of the last section and  let us imagine that
  $\randdual$ is drawn on the cylinder $[0,n+1]\times[-1, \infty)$ on
  top of the coding walk $ \mathcal{E}_{n}$ as in
  Figure~\ref{fig:G-walk}. More precisely, the vertices of $\randdual$
  are the points $(t, \mathcal{E}(t))$ for $t \in \mathcal{R}_{n}$,
  together with the additional vertices $ \varnothing^{ \mathrm{u}}$
  and $ \varnothing^{ \mathrm{d}}$ which are respectively at height
  $-1/2$ and $ H+1/2$ where $H= \max \mathcal{E}_{n}$. Recall the
  definition of $\mathrm{Slits}_{n}$ from \eqref{def:slits}. Finally,
  the edges of the graph $\randdual$ can also be drawn on the
  cylinder, such that the edges do not intersect $ \mathrm{Slits}_{n}$
  and such that the vertical coordinate is \emph{monotone} along each
  edge, see Figure \ref{fig:G-walk}.

Consider two times in $ \mathcal{R}_n$ 
corresponding  to two vertices $a$ and $b$
of $\randdual$, and consider a geodesic path in $ \randdual$ going from $a$
to $b$. In the above embedding,  such a path may be seen as a continuous curve $\gamma$
from $[0,1]$ to  the cylinder
$[0,n+1] \times [-1,\infty)$ avoiding $ \mathrm{Slits}_{n}$. Since two adjacent vertices in $ \randdual$ necessarily are at height difference $1$ and since edges have monotone vertical variation, we can write
\[ 
D_n(i+\tfrac12, j + \tfrac12) = \mathrm{VarHt}(\gamma),
\]
where $\mathrm{VarHt}(\gamma)$ is the total variation of the vertical
coordinate of the path 
$\gamma = ((\gamma_x(t), \gamma_y(t)) : 0 \leq t \leq 1)$, that is 
\[ 
\mathrm{VarHt}(\gamma) = \sup_{0=t_0<t_1<t_2<\cdots <
  t_n=1}\sum_{i=0}^{n-1}\left|\gamma_y(t_{i+1})-\gamma_y(t_i)\right|.
\]

Let us mimic the above construction in the continuous world using the
excursion $ \mathcal{E}$.  For $s,t \in [0,1]$ let
 \begin{eqnarray} \label{eq:defV}
\mathbf{V}(s,t) = \inf_{\gamma : (s, \ce(s)) \to (t, \ce(t))}
\mathrm{VarHt}(\gamma),
 \end{eqnarray}
where the infimum is taken over all continuous paths $\gamma$ on the
cylinder $[0,1] \times \mathbb{R}_+$ with the vertical boundaries
identified, going from $(s, \ce(s))$ to $(t, \ce(t))$ and which 
does not cross any segments of 
\[ 
\mathbf{Slits}( \ce) = \bigcup_{i\geq 0} \{t_i\} \times [ \ce(t_i-) ,
\ce(t_i)]
\] 
where $(t_i : i \geq 0)$ is an enumeration of the jumps of $
\ce$. Above we say that  $\gamma$
  \emph{crosses}  a vertical segment $V=\{\tau\} \times [a,b]$ if there exist 
  times $g,d\in[0,1]$ 
so that $\gamma(t) \in \{\tau\} \times (a,b)$
for all $t\in[g\wedge d,g\vee d]$
 and so that any neighbourhood of $g$
contains a time where $\gamma$ is to the left of $V$, 
and any neighbourhood of $d$
contains a time where $\gamma$ is to the right of $V$.
Notice   that $\mathbf{V}(s,t)$ is almost surely finite and
that $\mathbf{V}(s,t) \geq | \ce(s)-\ce(t)|$.

We state a useful lemma which uses only basic analysis: 
\begin{lemma} \label{lem:min} 
Almost surely,
for any $s ,t \in [0,1]$ there exists a
  continuous path $\gamma : (s, \ce(s)) \to (t, \ce(t))$ which does not cross any segment of 
  $\mathbf{Slits}( \ce)$ and  such that $ \mathrm{VarHt}(\gamma) =
  \mathbf{V}(s,t)$ and where the $x$-component of $\gamma$ 
is monotone (on the cylinder).
\end{lemma}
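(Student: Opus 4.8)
The plan is to establish the lemma in two stages: first reduce to paths whose $x$-component is monotone, then show among these a minimiser of $\mathrm{VarHt}$ exists. For the reduction, I would fix $s,t$ and take any continuous path $\gamma : (s,\ce(s)) \to (t,\ce(t))$ on the cylinder avoiding (not crossing) all slits, with $\mathrm{VarHt}(\gamma)$ within $\eps$ of $\mathbf{V}(s,t)$. I would then argue that $\gamma$ can be "straightened'' horizontally without increasing its vertical variation and without crossing new slits. Concretely, parametrise the cylinder's $x$-coordinate on $\R/\mathbb{Z}$; the path goes from $s$ to $t$, so it covers some arc $A \subseteq \R/\mathbb{Z}$ from $s$ to $t$ (one of the two arcs, say the one actually "used up''). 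For each $x \in A$ not equal to a jump time, look at all times $u$ with $\gamma_x(u) = x$; replace the piece of $\gamma$ over $A$ by the graph of the function $x \mapsto \gamma_y(u(x))$ for a well-chosen selection $u(x)$ — intuitively, "pull the string taut'' so that the path sweeps monotonically across $A$ while keeping, above each $x$, a height that was actually attained by $\gamma$ there. Because the new path only visits points $(x, \gamma_y(u))$ that $\gamma$ already visited, it still avoids the slits; and since monotone-in-$x$ reparametrisation can only merge back-and-forth excursions in the $x$-direction, the vertical variation does not increase (this uses that $\mathrm{VarHt}$ of a subpath is at least the variation of its $y$-values, and that we are deleting "wasted'' horizontal travel together with its associated vertical detours). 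The delicate point is selecting $u(x)$ continuously in $x$ so the resulting graph is itself a continuous (indeed càdlàg at jump times of $\ce$) path; at a jump time $t_i \in A$ the path must jump along the slit $\{t_i\}\times[\ce(t_i-),\ce(t_i)]$, and continuity forces the $y$-coordinate just before and just after $t_i$ to lie at the two ends of (a subinterval of) that slit, contributing exactly the slit's length to $\mathrm{VarHt}$ — this is where "does not cross'' (as opposed to "does not touch'') is used.

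Once we have, for every $\eps>0$, an $x$-monotone avoiding path $\gamma_\eps$ with $\mathrm{VarHt}(\gamma_\eps) \le \mathbf{V}(s,t) + \eps$, I would extract an actual minimiser by a compactness argument. An $x$-monotone path from $s$ to $t$ over the arc $A$ is essentially the graph of a function $f : A \to \R_+$ of bounded total variation $\mathrm{VarHt}(\gamma_\eps)$, with $f(s) = \ce(s)$, $f(t) = \ce(t)$, and with the avoidance constraint translating into: for each jump time $t_i \in A$, $f$ jumps across $[\ce(t_i-),\ce(t_i)]$ from one endpoint to the other (or from outside the slit, but avoidance forces it through the endpoints), and everywhere else $f$ avoids the open slit intervals — but since the only slits with interior are at the countably many $t_i$, and at non-jump times the slit is the single point $(t,\ce(t))$ which $f$ may pass through, the real constraint is just the jump behaviour at the $t_i \in A$ and $f \ge 0$ plus the boundary values. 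The family $\{f_\eps\}$ has uniformly bounded variation (bounded by $\mathbf{V}(s,t)+1$), so by Helly's selection theorem a subsequence converges pointwise to some $f$ of bounded variation with $\mathrm{TV}(f) \le \liminf \mathrm{TV}(f_\eps) = \mathbf{V}(s,t)$; one checks $f$ still satisfies the boundary and avoidance constraints (the jump conditions at the finitely-or-countably many relevant $t_i$ pass to the limit, using that $\ce$ is càdlàg), hence the graph of $f$ (interpolated across its jumps by vertical segments lying along slits where the jump is at a $t_i$, or by arbitrary short vertical moves elsewhere) is an admissible path $\gamma$ with $\mathrm{VarHt}(\gamma) \le \mathbf{V}(s,t)$, and by definition of $\mathbf{V}$ we also have $\ge$, so equality holds. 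The "almost surely'' is only needed because we want $\mathbf{V}(s,t)$ to be finite for all $s,t$ simultaneously, which was already noted above the lemma (the jumps of $\ce$ are summable in a suitable sense so a taut path exists).

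The main obstacle I anticipate is the straightening step: making precise the claim that one can reparametrise/modify a general continuous avoiding path into an $x$-monotone one without increasing vertical variation, while respecting the subtle "crosses'' condition at the jump times. The naive picture — "erase all backtracking in $x$'' — is intuitively obvious, but a rigorous argument needs care about (a) which arc $A$ of the circle the path effectively traverses, (b) how to handle times when $\gamma_x$ is locally constant (the path moves purely vertically — these vertical segments are exactly what must be kept and must be "assigned'' to the right $x$-value), and (c) ensuring the modified path does not suddenly start crossing a slit it previously merely brushed. I would handle this by working with the closed set of $x$-values visited and defining, over each $x$ in the traversed arc, a height chosen from the (closed, nonempty) set $\{\gamma_y(u) : \gamma_x(u) = x\}$ in a monotone-consistent way (e.g. via the general position of the path relative to each slit, choosing the "upper'' or "lower'' branch forced by how $\gamma$ gets around that slit), and verifying lower semicontinuity of $\mathrm{VarHt}$ under this operation. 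All remaining steps — Helly extraction, passing constraints to the limit, identifying the limit graph as an admissible path — are routine real analysis given the càdlàg regularity of $\ce$.
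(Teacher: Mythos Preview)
Your two-stage plan (monotonise the $x$-component, then extract a minimiser by Helly) is exactly the paper's structure, and your compactness half is essentially the same as theirs. The difference is in the straightening step, and this is precisely where you identify your own ``main obstacle''.

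You propose to reparametrise by $x$ and select, for each $x$ in the traversed arc, a height $\gamma_y(u(x))$ from among the heights $\gamma$ attains over $x$. This forces you to worry about (a) choosing $u(x)$ continuously, (b) what to do where $\gamma_x$ is locally constant, and (c) whether the new path might cross a slit that $\gamma$ only grazed. These are genuine difficulties, and your sketch does not resolve them; in particular there is no reason a continuous selection $u(x)$ should exist, and if you allow $f$ to have jumps which you then fill in by vertical segments, you have not shown the total variation is bounded by that of $\gamma$.

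The paper avoids all of this with a much simpler construction: keep $\gamma_y$ \emph{unchanged} and replace only the $x$-coordinate, setting
\[
\gamma_x^+(t)=\min\Big\{\sup_{s\in[0,t]}\gamma_x(s),\ \gamma_x(1)\Big\},
\qquad \gamma^+=(\gamma_x^+,\gamma_y).
\]
Then $\gamma^+$ is automatically continuous, has monotone $x$-component, has the same endpoints, and has \emph{exactly} the same height variation since $\gamma_y$ is untouched. Nothing needs to be selected or interpolated. The only thing left to check is that $\gamma^+$ does not cross a slit, and this is a short argument by contradiction: if $\gamma^+$ crossed the slit $\{\tau\}\times[a,b]$ on some interval $[g,d]$, then using that $d$ is a point of increase of $\gamma_x^+$ one finds $g'\in[g,d]$ with $\gamma_x([g',d])=\{\tau\}$ and $\gamma$ itself crossing the slit near $[g',d]$, contradicting the assumption on $\gamma$.

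So your plan is sound but you are making the first half harder than it needs to be; the key observation you are missing is that you can monotonise the horizontal coordinate while leaving the vertical coordinate, and hence $\mathrm{VarHt}$, completely alone.
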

\begin{proof} 
The lemma is deterministic and works if we replace $\ce$ by any càdlàg
function. Let us first see why we can restrict ourselves to paths
$\gamma = (\gamma_x,\gamma_y)$ whose $x$-coordinate $\gamma_x$ is
monotone (recall that we are on the 	cylinder). 
	
	Assume that $\gamma_x(0) \leq \gamma_x(1)$ and let $t' = \sup_{t\in[0,1]} \{ \gamma_x(t) = \gamma_x(0)\}$. Let us assume for concreteness that $\gamma_x(t) \geq \gamma_x(0)$ for all $t \in [t',1]$. This means that the path $\gamma$ does not wrap around the cylinder after time $t'$ and we will show that we may modify it such that its $x$-coordinate is monotonically increasing. Define a new path $\gamma^+ = (\gamma_x^+,\gamma_y)$ where
	\begin{align*}
		\gamma_x^+(t) = \min\bigg\{\sup_{s\in[0,t]} \gamma_x(s),\gamma_x(1)\bigg\}.
	\end{align*}
	The path $\gamma^+$ is by construction continuous, its
        $x$-coordinate is monotonically increasing and it has the same
        endpoints and the same height variation as $\gamma$. It
        remains to argue that it does not cross any slits. Assume the
        opposite, then there is a slit $\{\tau\}\times[a,b]$ and times
        $g \leq d$ such that $\gamma^+(t) \in \{\tau\}\times (a,b)$
        for all $t\in [g,d]$ and there is a $\delta >0$ such that
        $\gamma^+_x(t) < \tau$ for $t\in (g-\delta,g)$ and
        $\gamma^+_x(t) > \tau$ for $t\in (d,d+\delta)$. Let $g' =
        \inf\{t\leq d~:~ \gamma_x([t,d]) = \{\tau\}\}$. Since $d$ is a
        point of increase of $\gamma^+_x$ it holds that $\gamma_x(d) =
        \gamma_x^+(d) = \tau$ and thus the infimum is over a non-empty
        set. Since $\gamma_x(t) \leq \gamma_x^+(t) < \tau$ for $t\in
        (g-\delta,g)$ it holds that $g'\geq g$. Therefore, $\gamma(t)
        \in \{\tau\}\times(a,b)$ for all $t\in [g',d]$. Now, for any
        $\eps >0$ there is a $t \in (g'-\eps,g')$ such that
        $\gamma_x(t) < \tau$ (otherwise, $g'$ could be made smaller)
        and a $t'\in (d,d+\eps)$ such that $\gamma_x(t) > \tau$ (since
        $d$ is a point of increase of $\gamma_x^+$). We have thus
        shown that $\gamma$ crosses a slit which is a contradiction. 
	
	We now restrict ourselves to paths for which the
        $x$-coordinate is monotone. Consider a sequence of such paths
        $\gamma_n = (\gamma_{x,n},\gamma_{y,n})$ such that
        $\operatorname{VarHt}(\gamma_n) \to \mathbf{V}(s,t)$ as $n\to
        \infty$. Since the height variation converges, $\gamma_{n,y}$
        are of a uniformly bounded variation. Since $\gamma_{x,n}$ are
        monotone and uniformly bounded, they are also of a uniformly
        bounded variation. Therefore we may assume, using
          continuity and by reparameterizing, that the sequence is
        $C$-Lipschitz for some $C>0$. By Helly's selection theorem
        (see e.g.~ \cite{kolmogorov:1970}, Theorems 4 and 5 in Section
        36) $\gamma_n$ has a subsequential limit which we denote by
        $\gamma$, and  
	\begin{align*}
		\operatorname{VarHt}(\gamma) \leq 
\lim_{n\to\infty}\operatorname{VarHt}(\gamma_n) = \mathbf{V}(s,t).
	\end{align*}
	Moreover, the Lipschitz condition guarantees that $\gamma$ is continuous. 
	Finally, it is easy to see that $\gamma$ does not cross any
        slits since none of the $\gamma_n$ did.
\end{proof}

\begin{proposition}[Lower bound on distances]  \label{prop:lowbound}
Almost surely, under the coupling in \eqref{eq:cvinfty}
\[ \liminf_{n \to \infty} \inf_{s,t \in [0,1]} 
\left(n^{-1/\alpha} \widetilde{D}_n(s,t) - \mathbf{V}(s,t)\right) 
\geq 0.
\]
\end{proposition}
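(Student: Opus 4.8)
The plan is to argue by contradiction: transfer the identity ``graph distance $=$ height variation of a geodesic'' to the scaling limit, using that a geodesic of $\randdual$ must avoid $\mathrm{Slits}_n$, which after rescaling converges to $\mathbf{Slits}(\ce)$. Suppose the statement fails on an event of positive probability; then on it there are $\eps>0$, a subsequence $(n_k)$ and times $s_k,t_k\in[0,1]$ with $n_k^{-1/\alpha}\widetilde D_{n_k}(s_k,t_k)\le \mathbf V(s_k,t_k)-\eps$ for all $k$. Both sides are bounded ($n^{-1/\alpha}\widetilde D_n$ by Proposition~\ref{prop:upbound}, and $\mathbf V$ because one can always route a path over the top of all the slits), so after a further extraction $s_k\to s$, $t_k\to t$, $\mathbf V(s_k,t_k)\to\ell$ and $n_k^{-1/\alpha}\widetilde D_{n_k}(s_k,t_k)\to\ell'$ with $\ell'\le\ell-\eps$. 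For each $k$ I realise a graph geodesic of $\randdual$ between the vertices coded by $\psi_{n_k}(s_k)^{\circ}$ and $\psi_{n_k}(t_k)^{\circ}$ as a path on the cylinder $[0,n_k+1]\times[-1,\infty)$ which avoids $\mathrm{Slits}_{n_k}$ and is vertically monotone along each edge (as before \eqref{eq:defV}); its height variation equals that distance. Rescaling vertically by $n_k^{-1/\alpha}$ and horizontally through $\psi_{n_k}^{-1}$ yields a path $\gamma_k$ on the fixed cylinder $[0,1]\times\R_+$ with $\mathrm{VarHt}(\gamma_k)=n_k^{-1/\alpha}\widetilde D_{n_k}(s_k,t_k)$, whose endpoints have heights $\widetilde{\mathcal E}_{n_k}(s_k)$, $\widetilde{\mathcal E}_{n_k}(t_k)$ and abscissas inside the plateaux of $\overline{\mathcal E}_{n_k}$ surrounding $s_k$, $t_k$. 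By \eqref{eq:cvinfty} --- together with the almost sure fact that $\ce$ has no interval of constancy, which forces these plateaux to shrink uniformly --- the endpoints of $\gamma_k$ converge to $(s,c_s)$ and $(t,c_t)$, where $c_s\in\{\ce(s-),\ce(s)\}$ is the common limit of $\widetilde{\mathcal E}_{n_k}(s_k)$ and of the heights $\ce(s_k)$ that appear in $\mathbf V(s_k,t_k)$.

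Next I extract a limiting path. The vertical coordinates of the $\gamma_k$ have uniformly bounded variation since $\mathrm{VarHt}(\gamma_k)$ is bounded; the horizontal coordinates can likewise be controlled because every edge of $\randdual$ changes the level by $\pm1$, so a geodesic has limited horizontal range on a bounded height budget --- in particular, for large $k$ it cannot wind around the cylinder --- and a short argument shows the horizontal coordinates are of uniformly bounded variation as well. Reparametrising so that the $\gamma_k$ are uniformly Lipschitz and applying Helly's theorem (as in Lemma~\ref{lem:min}) gives a subsequential uniform limit $\gamma$ from $(s,c_s)$ to $(t,c_t)$ with $\mathrm{VarHt}(\gamma)\le\liminf_k\mathrm{VarHt}(\gamma_k)=\ell'$. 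The key point is that $\gamma$ crosses no segment of $\mathbf{Slits}(\ce)$: for each jump time $\tau$ of $\ce$, \eqref{eq:cvinfty} produces discrete jumps whose rescaled slits converge to $\{\tau\}\times[\ce(\tau-),\ce(\tau)]$, each $\gamma_k$ avoids them, and the property of crossing a vertical segment is stable under the uniform convergence $\gamma_k\to\gamma$; hence $\gamma$ does not cross $\{\tau\}\times(\ce(\tau-),\ce(\tau))$, and, as this holds for every jump, $\gamma$ is admissible in the infimum \eqref{eq:defV} defining $\mathbf V$.

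To finish I append to $\gamma$ two arbitrarily short detours near its endpoints, hugging the graph of $\ce$ from the appropriate side (possible because $\ce$ has no negative jumps and has small oscillation over small intervals), to obtain a path from $(s_k,\ce(s_k))$ to $(t_k,\ce(t_k))$ that still avoids $\mathbf{Slits}(\ce)$ and whose height variation exceeds $\mathrm{VarHt}(\gamma)$ by only $o(1)$ as $k\to\infty$. This gives $\mathbf V(s_k,t_k)\le\mathrm{VarHt}(\gamma)+o(1)\le\ell'+o(1)$, and letting $k\to\infty$ yields $\ell\le\ell'\le\ell-\eps$, a contradiction since $\eps>0$.

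The step I expect to be hardest is the extraction of the limiting geodesic and the verification that it avoids \emph{all} of $\mathbf{Slits}(\ce)$: the horizontal oscillation of discrete geodesics has to be bounded, and one must rule out that a ``macroscopic'' barrier is built from the infinitely many small jumps of $\ce$ (which for $\alpha\in(1,2)$ have infinite total variation), while also matching the endpoint heights at the jump times of $\ce$ --- this is exactly where the precise alignment of jumps arranged in \eqref{eq:cvinfty} is needed. In contrast to the later identity $\mathbf V=\mathbf D^{\ast}$ of Theorem~\ref{thm:V=D*}, which relies on fine sample-path properties of the stable excursion and fails deterministically (see Remark~\ref{ex:counterex}), the present lower bound is essentially a consequence of the uniform convergence \eqref{eq:cvinfty}.
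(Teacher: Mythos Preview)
Your strategy is the same as the paper's: realise discrete geodesics as continuous paths avoiding $\mathrm{Slits}_n$, pass to a subsequential limit, and check that the limiting curve is admissible in the infimum defining $\mathbf V$. The paper organises this slightly differently (it first reduces to finitely many dyadic endpoints $s,t\in\mathcal D(N)$, thereby avoiding your endpoint issues with $c_s\in\{\ce(s-),\ce(s)\}$, and it handles the slit-avoidance by working with truncated large slits $\mathbf{Slits}^\delta_\eps(\ce)$), but these are matters of packaging.

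There is, however, a genuine gap in your compactness step. You assert that ``a short argument shows the horizontal coordinates are of uniformly bounded variation as well'', justifying this by the fact that each edge of $\randdual$ changes the height by $\pm1$ and that the geodesic cannot wind around the cylinder. Neither observation controls the horizontal \emph{total variation}: a single edge of $\randdual$ may have macroscopic horizontal extent (when it borders a large face), and a geodesic of length $O(n^{1/\alpha})$ can zigzag horizontally with no a priori bound after rescaling. Without a bound on the horizontal variation you cannot reparametrise to obtain a uniformly Lipschitz family, and Helly/Arzel\`a--Ascoli does not apply.

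The paper's fix is precisely the first half of Lemma~\ref{lem:min}: before extracting a limit, replace each $\gamma_k$ by the path $\gamma_k^+$ obtained by taking the running maximum of its $x$-coordinate (on the cylinder). This preserves the height variation, preserves the property of not crossing the (discrete) slits, and forces the $x$-coordinate to be monotone, hence of variation at most $1$. Then the family has uniformly bounded total variation and the rest of your argument goes through. If you insert this monotonisation (and, ideally, reduce to fixed dyadic $s,t$ to sidestep the jump-time endpoint issue), your proof becomes essentially identical to the paper's.
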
 
\begin{proof}
As in Proposition \ref{prop:upbound} we use that, given $\eps>0$, there
is $N$ such that for all large enough $n$ we have
\[
\inf_{s,t \in [0,1]} 
\left(n^{-1/\alpha} \widetilde{D}_n(s,t) - \mathbf{V}(s,t)\right) 
\geq -\eps+\min_{s,t \in \mathcal{D}(N)} 
\left(n^{-1/\alpha} \widetilde{D}_n(s,t) - \mathbf{V}(s,t)\right).
\]
Moreover, for each such $s$, $t$ and $n$ there is a continuous
path $\gamma_{n,\eps}$ from 
$(s,\widetilde{\mathcal{E}}_n(s))$ to $(t,\widetilde{\mathcal{E}}_n(t))$ which does not cross 
$\mathbf{Slits}(\widetilde{\mathcal{E}}_n)$ such that 
$n^{-1/\alpha} \widetilde{D}_n(s,t)\geq 
-\eps+\mathrm{VarHt}(\gamma_{n,\eps})$. 
We also define for $\eps>0$ and $0\leq \delta \leq \eps$, the set 
\begin{align*}
\mathbf{Slits}_\eps^\delta( \ce) = \bigcup_{i\geq 0} \left\{\{t_i\} \times [ \ce(t_i-)+\delta/2 ,
\ce(t_i)-\delta/2] : \ce(t_i)-\ce(t_i^-) > \eps\right\}
\end{align*}
which consists of the slits of length larger than $\eps$ which are
furthermore truncated from the top and bottom by $\delta/2$. Denote
the $x$-coordinates of the slits in
$\mathbf{Slits}_\eps^0(\widetilde{\mathcal{E}}_n)$, by $t_{i,n}$,
$i=1,\ldots,m_{\eps,n}$ and of the slits in
$\mathbf{Slits}_\eps^0(\ce)$ by $t_{i}$, $i=1,\ldots,m_\eps$.  
Since $\widetilde{\mathcal{E}}_n$ converges uniformly to $\ce$ we may
assume, by choosing $n$ large enough, that $m_{\eps,n} = m_\eps$ and
(by permuting the jump-times) that $t_{i,n} = t_{i}$ and 
\[ 
|\widetilde{\mathcal{E}}_n(t_i)  - \ce(t_i)| < \eps/2, \qquad |  \widetilde{\mathcal{E}}_n(t_i^-) - \ce(t_i^-)| < \eps/2
\]
for all $1\leq i \leq  m_\eps$.
Arguing as in Lemma \ref{lem:min}, we may find a subsequence along which the $\gamma_{n,\eps}$  converge towards a continuous curve $\gamma_\eps$ which does not cross 
$\mathbf{Slits}_\eps^\eps(\ce)$ and letting $\eps \to 0$ we may find a sequence among the $\gamma_\eps$ which converges towards a continuous curve $\gamma$ which does not cross $\mathbf{Slits}(\mathcal{E})$ and such that 
\[
\liminf_{\eps \to 0} \liminf_{n\to\infty} \mathrm{VarHt}(\gamma_{n,\eps}) \geq \liminf_{\eps \to 0}\mathrm{VarHt}(\gamma_{\eps}) \geq \mathrm{VarHt}(\gamma) \geq V(s,t)
\]
which concludes the proof.
\end{proof}

\section{The $\alpha$-stable shredded spheres and 
$ \mathbf{V}=  \mathbf{D}^\ast$} 
Recall the definition of the two (random) pseudo-distances $
  \mathbf{D}^{\ast}$ given in \eqref{eq:d-star} and $ \mathbf{V}$
  given in \eqref{eq:defV}. 
In this section we establish our main result:
\begin{theorem} \label{thm:V=D*} 
For any $\alpha \in (1,2)$, almost surely  for $ \ce$ we 
have $\mathbf{V} = \mathbf{D}^\ast$.
\end{theorem}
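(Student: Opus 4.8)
The plan is to prove the two inequalities $\mathbf{V}\le\mathbf{D}^\ast$ and $\mathbf{V}\ge\mathbf{D}^\ast$ separately, the first being soft and the second being the heart of the matter.

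\textbf{The easy inequality $\mathbf{V}\le\mathbf{D}^\ast$.} Here I would argue directly in the continuous setting. It suffices to show $\mathbf{V}(s,t)\le\mathbf{D}^\up(s,t)$ and $\mathbf{V}(s,t)\le\mathbf{D}^\down(s,t)$ for all $s,t$, since $\mathbf{V}$ satisfies the triangle inequality and $\mathbf{D}^\ast$ is by \eqref{eq:d-star} the largest pseudo-distance bounded by both $\mathbf{D}^\up$ and $\mathbf{D}^\down$. For the bound by $\mathbf{D}^\up$, given $s<t$ let $m=\inf_{[s,t]}\ce$ and pick $u_\ast\in[s,t]$ (approximately) attaining it; then the path that goes down along the graph of $\ce$ from $(s,\ce(s))$ to height $m$, travels horizontally at height $m$ from $s$-side to $u_\ast$ to the $t$-side, and climbs back up to $(t,\ce(t))$ has height variation $\ce(s)+\ce(t)-2m=\mathbf{D}^\up(s,t)$. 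One must check this path crosses no slit: a slit $\{t_i\}\times[\ce(t_i-),\ce(t_i)]$ lies entirely above level $\ce(t_i-)\ge\inf_{[s,t]}\ce=m$ when $t_i\in[s,t]$, so travelling exactly at height $m$ avoids it (up to the usual approximation, travelling at height $m-\epsilon$). The descending/ascending portions stay on the graph of $\ce$, hence do not cross slits either. The bound by $\mathbf{D}^\down$ is the mirror statement: travel at a high level, either the max over $[s,t]$ or the max over the complement, whichever is smaller, going around the cylinder if the complementary arc is used; again slits are avoided because each slit stays below $\ce(t_i)\le\sup$.

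\textbf{The hard inequality $\mathbf{V}\ge\mathbf{D}^\ast$.} This is where I expect the real work, and where the discrete bounds of Section~2 enter. The clean route is: by Proposition~\ref{prop:upbound}, $\limsup_n(n^{-1/\alpha}\widetilde D_n-\mathbf{D}^\ast)\le0$ uniformly, and by Proposition~\ref{prop:lowbound}, $\liminf_n(n^{-1/\alpha}\widetilde D_n-\mathbf{V})\ge0$ uniformly; combining gives $\mathbf{V}\le\mathbf{D}^\ast$ pointwise — wait, that is the \emph{wrong} direction, so the two discrete bounds alone only re-prove the easy inequality. Hence the genuine content is a \emph{new} lower bound on $\mathbf{V}$, proved intrinsically in the continuum, showing $\mathbf{V}(s,t)\ge\mathbf{D}^\ast(s,t)$. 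The strategy I would pursue is a chaining/bootstrap argument in the spirit of Gwynne--Miller \cite{gwynne-miller} (as the authors advertise): take an optimal monotone-in-$x$ path $\gamma$ from $(s,\ce(s))$ to $(t,\ce(t))$ furnished by Lemma~\ref{lem:min}, with $\mathrm{VarHt}(\gamma)=\mathbf{V}(s,t)$. As $x=\gamma_x$ increases (say) from $s$ to $t$, decompose the trajectory at the successive slits it must go around: each time $\gamma$ passes a slit $\{t_i\}\times[\ce(t_i-),\ce(t_i)]$ it is forced to be either entirely below $\ce(t_i-)$ or entirely above $\ce(t_i)$ in a neighbourhood of $t_i$ (since it does not cross the slit); this partitions $[s,t]$ into a finite-or-countable sequence of maximal sub-intervals on which no large slit is an obstacle, linked at points $a_i,b_i$ where the path sits at the foot or the top of a slit. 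On an interval where $\gamma$ stays "below" the separating structure, the height it must traverse is at least the down-excursion depth, i.e.\ contributes at least a $\mathbf{D}^\up$-type cost; on an interval where it stays "above", it contributes at least a $\mathbf{D}^\down$-type cost; summing the telescoping contributions along $a_1,b_1,\dots,a_k,b_k$ yields exactly a competitor for the infimum defining $\mathbf{D}^\ast(s,t)$ in \eqref{eq:d-star}, hence $\mathrm{VarHt}(\gamma)\ge\mathbf{D}^\ast(s,t)$.

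\textbf{Main obstacle.} The delicate point is that the obstructing slits need not be finite in number and need not be "nested" — a priori the path can weave around infinitely many small jumps, and the naive decomposition above can fail to terminate or to assemble into a valid finite sequence $a_1,b_1,\dots,a_{k+1}$. This is precisely why a bootstrap is needed: one first proves the inequality with an error term controlled by the contribution of slits of size $>\epsilon$ (finitely many, by càdlàg-ness of $\ce$), reducing to a deterministic combinatorial statement about paths avoiding finitely many segments; then one shows the total height the path spends "near" slits of size in $(\epsilon',\epsilon]$ is negligible as $\epsilon,\epsilon'\to0$, using the scaling/regularity of the stable excursion $\ce$ (this is the probabilistic input, and where the Gwynne--Miller-type iteration genuinely buys something). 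I would expect the quantitative estimate — that small slits cannot collectively "trap" a macroscopic amount of the optimal path's height variation — to require a careful stopping-time or union-bound argument over dyadic scales of jump sizes, tuned to the exponent $\alpha$, and this is the step that will consume most of the proof.
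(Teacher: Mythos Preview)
Your treatment of the easy inequality $\mathbf{V}\le\mathbf{D}^\ast$ is fine and matches the paper's.

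For the hard inequality there is a genuine gap. Your central claim --- that on a sub-interval where $\gamma$ passes \emph{below} the separating slit the height variation contributes ``at least a $\mathbf{D}^\up$-type cost'' (and dually for ``above'') --- is not justified, and in fact is the whole difficulty. Passing below the slit at time $t_i$ only tells you that $\gamma_y$ is $\le \ce(t_i-)$ at that $x$-coordinate; it gives no control on the minimum of $\ce$ over the sub-interval, hence no lower bound of the form $\mathbf{D}^\up$. Your fallback plan (truncate at slits of size $>\varepsilon$, then show small slits are negligible) inherits the same problem: on a sub-interval with no large slits you still have no inequality between $\mathrm{VarHt}(\gamma)$ and $\mathbf{D}^\ast$ there, so nothing telescopes. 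Remark~\ref{ex:counterex} shows the desired inequality is genuinely probabilistic, so a deterministic decomposition cannot close the gap.

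What the paper actually does is quite different in mechanism. One works first with the unconditioned process $X$ and introduces, for each scale $\varepsilon$, a \emph{Markovian} exploration: stopping times $S_k$ at which $X$ first exceeds its previous checkpoint by $\varepsilon$ (so $X$ jumps at $S_k$), and the path $\gamma$ is encoded by a word $W^\varepsilon(\gamma)\in\{\tth,\ttb\}^\ell$ recording whether it goes above or below each such jump. The point of using stopping times (rather than ``jumps of size $>\varepsilon$'') is that for a \emph{fixed} word the pieces of $X$ are i.i.d., and in particular the ``underjumps'' $\chi_k=\varepsilon^{-1}(X_{T_{k-1}}-X_{S_k-})$ are i.i.d.\ with a heavy positive tail (stochastically bounded below by a shifted positive $(\alpha-1)$-stable variable, Proposition~\ref{prop:chi-dom}). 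Two deterministic inequalities (Lemma~\ref{lem:V-bounds}) then say
\[
\mathbf{D}^\ast(0,t)\le \mathbf{V}(0,t)+2\varepsilon\,\#\ttb(W)+2\varepsilon,
\qquad
\mathbf{V}(0,t)\ge \varepsilon\,\#\tth(W)+\varepsilon\sum_{k:\,W_k=\ttb}\chi_k.
\]
If $\mathbf{D}^\ast>(1+\delta)\mathbf{V}$ somewhere, these force the word to have a small proportion of $\tth$'s \emph{and} an atypically small sum $\sum\chi_k$; a large-deviation bound (Proposition~\ref{prop:chi-sum}) makes the probability of the latter $\le e^{-c\,\#\ttb}$, which beats the entropy $\le e^{h(p)\ell}$ of words with $\tth$-proportion $\le p$ once $p$ is small. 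A union bound over all words then gives $\mathbf{D}^\ast\le(1+\delta)\mathbf{V}$ for some large $\delta$. The Gwynne--Miller bootstrap is not over jump scales but over this \emph{multiplicative constant}: feeding $\mathbf{D}^\ast\le(1+\delta)\mathbf{V}$ back in sharpens Lemma~\ref{lem:V-bounds} (Lemma~\ref{lem:V-boundsimproved}) and yields a strictly smaller $\sigma<\delta$, hence $\delta=0$. Finally one transfers from $X$ to $\ce$ by local absolute continuity. None of this structure --- stopping-time encoding, i.i.d.\ underjumps, entropy-vs-large-deviation tradeoff, bootstrap on the constant --- appears in your outline, and it is precisely what replaces the unjustified ``$\mathbf{D}^\up$/$\mathbf{D}^\down$-type cost'' step.
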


Given the above theorem we can equivalently define the shredded sphere
$ \shred_\alpha$ as the quotient metric space $ [0,1]/ \{
\mathbf{D}^{\ast}=0\}$ equipped with the (projection of) $
\mathbf{D}^{\ast}$ or similarly using $ \mathbf{V}$ instead of $
\mathbf{D}^\ast$. Let us show why this, combined with our asymptotic
upper and lower bounds for the distances in $\randdual$,  will easily
imply Theorem \ref{thm:conv}. \label{sec:defshred} 

\begin{proof}[Proof of Theorem \ref{thm:conv}] 
Recall from the discussion after Proposition \ref{prop:firstscaling} that we have assumed that $ \widetilde{\mathcal{E}}_n \to \ce$
uniformly as $n \to \infty$ almost surely. Recall also that 
$\randdual \backslash \{\varnothing^\up, \varnothing^\down\}$
(considered as a metric space)
is the
quotient of $[0,1]$ by $\{\widetilde{D}_n =0\}$ 
endowed with the
projection of $ \widetilde{D}_n$. Similarly, $ \shred_\alpha$ is the
quotient of $[0,1]$ by the pseudo-distance $ \mathbf{V}$ or
equivalently by $  \mathbf{D}^\ast$. Those projections from $[0,1]$
induce a natural correspondence between $ n^{-1/\alpha} \cdot
\randdual \backslash \{\varnothing^\up, \varnothing^\down\}$ and $
\shred_\alpha$ whose distortion converges to $0$ by Propositions
\ref{prop:upbound} and \ref{prop:lowbound} 
combined with 
Theorem \ref{thm:V=D*}. Since
$\{\varnothing^\up, \varnothing^\down\}$ are at distance $
n^{-1/\alpha}$ from another vertex in $ n^{-1/\alpha}\cdot \randdual$
the result is granted. 
\end{proof}

Let us give a rough idea of the proof of Theorem
\ref{thm:V=D*}. First,
it is straightforward to see that the inequality
$\mathbf{V} \leq\mathbf{D}^\ast$ is always valid for
any c\`adl\`ag function $\ce$.
Indeed, for any $s=a_{1},b_{1}, \dots, a_{k},b_{k},a_{k+1}=t$
one can easily construct a path $\gamma$ going from $(s, \ce(s))$ to
$(t, \ce(t))$ and not crossing $ \mathbf{Slits}(\ce)$ whose height
variation is as close as we wish to $ \sum_{i=1}^{k}
\mathbf{D}^{\up}(a_{i},b_{i})+
\mathbf{D}^{\down}(b_{i},a_{i+1})$. Hence
  \begin{eqnarray}   \label{eq:V<D*} \mathbf{D}^\ast(s,t) \geq\mathbf{V}(s,t), \qquad \forall s,t \in [0,1].   \end{eqnarray}

However the inequality $ \mathbf{D}^\ast\leq\mathbf{V}$ is much
harder.  
We give an example below which shows that this is not a
deterministic result (indeed we shall rely on probabilistic properties
of $ \ce$).  We shall first prove 
a version of the inequality $ \mathbf{D}^\ast \leq \mathbf{V}$ for
an unconditioned L\'evy process $X$.
 To do that, we
introduce an exploration method of a path not crossing 
$\mathbf{Slits}(X)$ for a given scale $ \varepsilon>0$. This encodes
the path into a word on two letters $\tth$ and $\ttb$. For each word
obtained, this decomposes $X$ into i.i.d.~pieces for which we can
estimate the height variation needed to cross it. The proof then goes
in two stages. We first prove that
$\mathbf{D}^\ast\leq C \cdot \mathbf{V}$ for some constant $C>1$;
then we bootstrap 
the argument to show that $C$ must be equal to $1$.  
This last scheme of proof  was used earlier by Gwynne and 
Miller \cite{gwynne-miller} in a
very different and independent context, also dealing with random
metrics.

\begin{remark}[A deterministic counter-example]
\label{ex:counterex}
Introduce the functions
\[
f_1(t)=\left\{\begin{array}{ll}
0, & \mbox{if } t\in[0,1/3),\\
3(1-2t), & \mbox{if } t\in[1/3,2/3),\\
0, & \mbox{if } t\in[2/3,1],
\end{array}\right.
\mbox{ and for $k\geq1$, }\quad
f_{k+1}(t)=\beta
\left\{\begin{array}{ll}
f_k(3t), & \mbox{if } t\in[0,1/3),\\
0, & \mbox{if } t\in[1/3,2/3),\\
f_k(3t-2), & \mbox{if } t\in[2/3,1],
\end{array}\right.
\]
where $\tfrac12\leq\beta\leq1$.   Let  (see Figure \ref{fig:counterex})
\[
F(t)=\sum_{k\geq1} f_k(t).
\]
For the function $F$ we have 
$\mathbf{V}(0,1)=0$ (for $\gamma$ one can
choose a straight horizontal line) and it is not hard to show that
 $ \mathbf{D}^\ast(0,1)=2$.   
\begin{figure}[hbt]
\centering
\includegraphics [scale=.55]{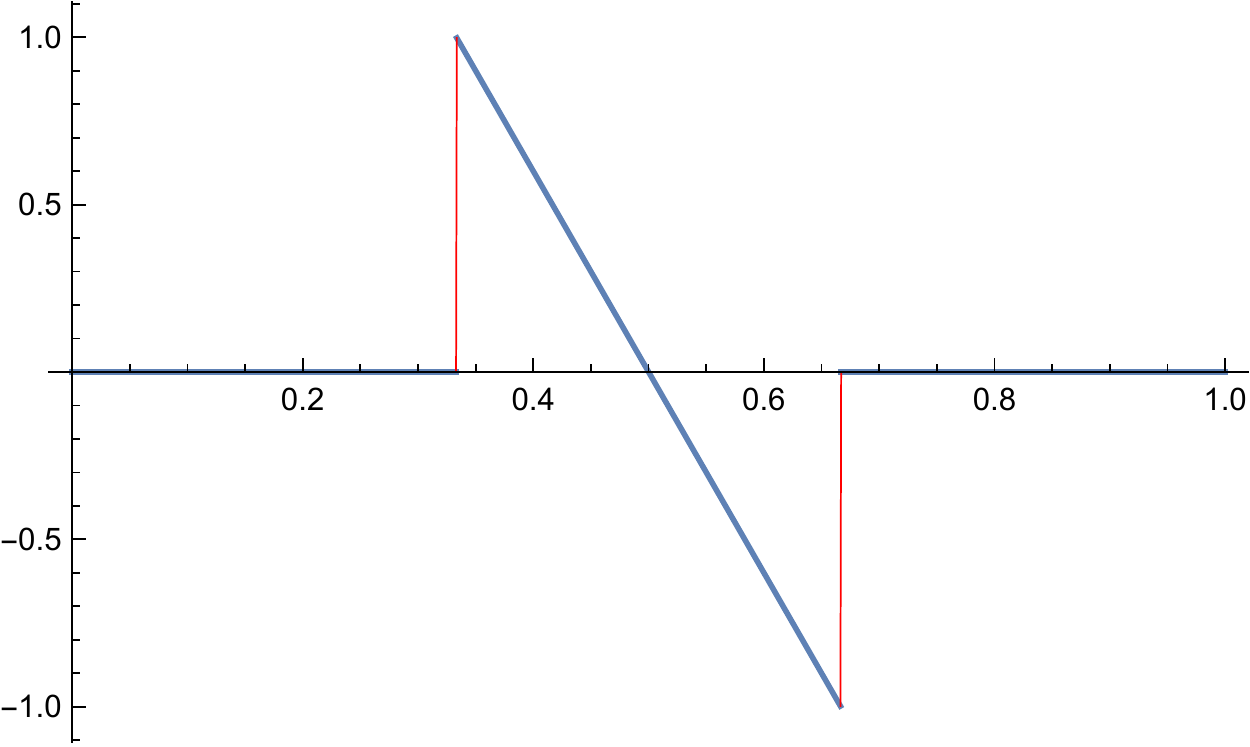}
\qquad
\includegraphics [scale=.55]{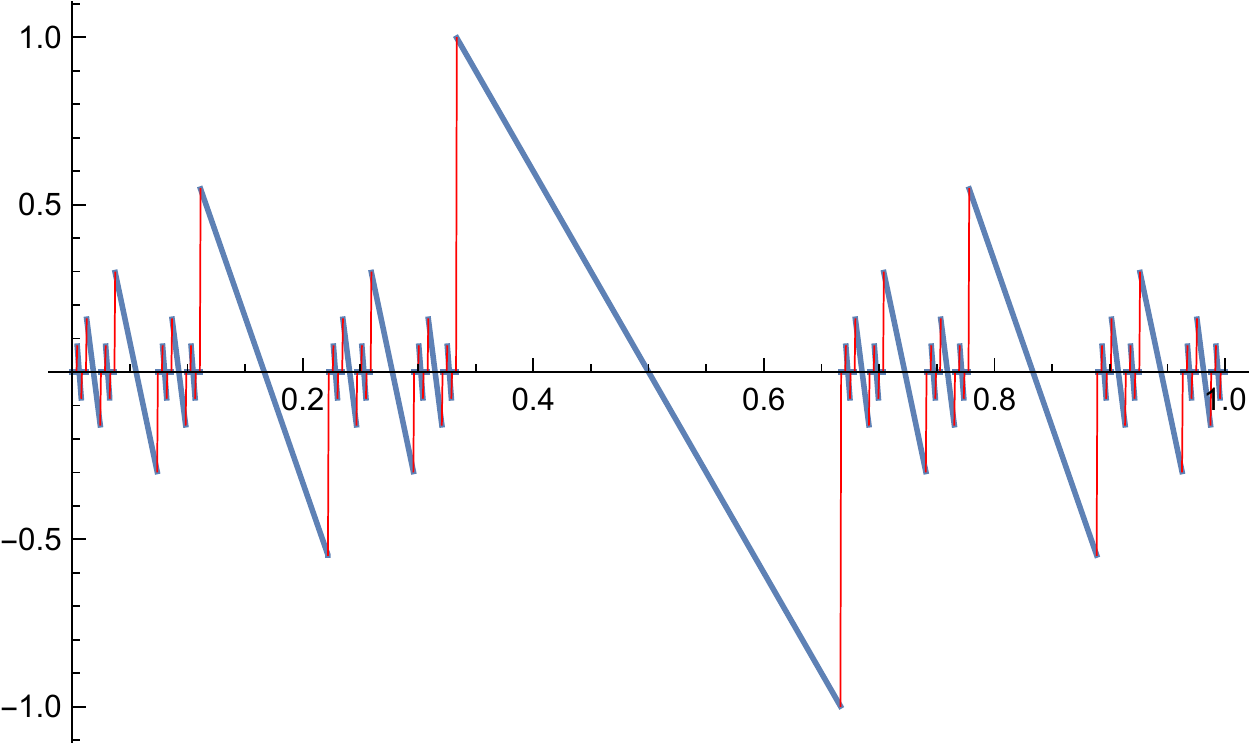}
\caption{Left: $f_1(t)$.  Right: $\sum_{k=1}^5 f_k(t)$.  The slits are
  drawn in red.  In this drawing we chose $\beta=0.55$.}
\label{fig:counterex}
\end{figure}
\end{remark}

\subsection{Exploration of a path}
We now turn to the proof of Theorem \ref{thm:V=D*}. We shall first
prove the result for the (unconditioned) $\alpha$-stable L\'evy process
$X=(X_t)_{t\geq0}$, the result for the excursion $ \ce$ will follow by  local 
absolute continuity. 
The main idea is an encoding of a path $\gamma$
at a given scale by a word which induces a Markovian exploration of the
L\'evy process $X$.

We define the corresponding objects 
$\mathbf{V},\mathbf{D}^\ast, \mathbf{Slits},\dotsc$ for $X$ as
follows.  Firstly, for $0\leq s<t$,
\[
\mathbf{D}^{\up}_X(s,t) = X(s)+ X(t) - 
2 \inf_{u \in [s , t]} X(u),\qquad
\mathbf{D}^{\down}_X(s,t) = 
2 \sup_{u \in [s , t]} X(u)  -X(s)- X(t) 
\]
and
\[
 \mathbf{D}^{\ast}_X(s,t) = 
\inf \left\{ \sum_{i=1}^{k}  \mathbf{D}_X^{\up}(a_{i},b_{i})+  
\mathbf{D}_X^{\down}(b_{i},a_{i+1}): 
{s=a_{1}, b_{1},  \dotsc, a_{k}, b_{k}, a_{k+1}=t}\right\}.
\]
Next,
$\mathbf{Slits}(X) = \bigcup_{i\geq 0} \{t_i\} \times [ X(t_i-) ,
X(t_i)]$
where $(t_i : i \geq 0)$ is an enumeration of the jumps of $X$, and
$\mathbf{V}(s,t) = \inf\mathrm{VarHt}(\gamma)$,
where the infimum is taken over all continuous paths $\gamma$ on 
$[0,\infty) \times \mathbb{R}$ going from 
$(s, X(s))$ to $(t, X(t))$ and which 
do not cross any segments of $\mathbf{Slits}(X)$.
Note that we do not use periodic time for these definitions.
By abuse of notation we still write 
$\mathbf{V},\mathbf{D}^\ast, \mathbf{Slits},\dotsc$ for 
$\mathbf{V}_X,\mathbf{D}^\ast_X, \mathbf{Slits}(X),\dotsc$ and trust
that the choice will be clear from context.

\subsubsection{Exploration along a fixed word at scale 
$\varepsilon>0$} 
For each positive integer  $\ell$ we will consider
`words' $w\in\{\ttb,\tth\}^\ell$ of length $\ell$ 
on the two letters $\ttb$ and $\tth$.  We write
$\#\tth(w)$ and $\#\ttb(w)$ for the number of entries $\tth$ 
and $\ttb$ in a word $w$, respectively, 
and $\#w=\#\tth(w)+\#\ttb(w)=\ell$ for its total length\footnote{Since
  the letters $ \mathrm{u}$ for ``up'' and $ \mathrm{d}$ for ``down''
  are already used, $ \ttb$ stands for the French ``bas'' (down) and
  $\tth$ for ``haut'' (up)}.  
For $\eps>0$ and a word of length $\ell$ define a 
sequences of stopping times $(S_k,T_k)_{k=1}^\ell$,
 as follows. First $S_0=T_0=0$, and then for $1 \leq k \leq \ell$,
\begin{align}
  S_{k}&=\inf\{t\geq T_{k-1}: X_t\geq X_{T_{k-1}}+\eps\}\\
  T_{k}&=\left\{
  \begin{array}{ll}
    S_{k}, & \text{if } w_k=\tth,\\
    \inf\{t\geq S_{k}: X_t=X_{S_{k}-}\}, &\text{otherwise}.
  \end{array}
  \right.
\end{align}
See Figure \ref{fig:word}.  Let us describe by a few sentences the
definition of these stopping times. Initially, we sit on the point
$(0,0)$ of $X$. We first wait for the first time $S_{1}$ where the
process $X$ exceeds level $ \varepsilon>0$. This will happen in finite
time and almost surely the process $X$ makes a jump at that time. We
then look at the first letter of our word $w$: if it is an $\tth$,
then we decide to go ``above'' that jump and we move on to the point
$(S_{1},X_{S_{1}})$. If it is a $\ttb$, then we decide to go ``below''
that jump: we then first wait until $T_{1}$ for the L\'evy process to
go down and reach again the value $X_{S_{1}-}$ and then we move on to
the point $(T_{1}, X_{T_{1}})$. Notice that $X$ is continuous at
  time $T_{1}$ since it has no negative jumps, in particular
  $X_{T_{1}}= X_{T_{1}-}=X_{S_{1}-}$.  We then iterate that
construction and by  induction all those stopping times are almost
surely finite and $X$ makes a jump at each time $S_{k}$. 

\begin{figure}[htb]
\centering
\includegraphics[scale=1]{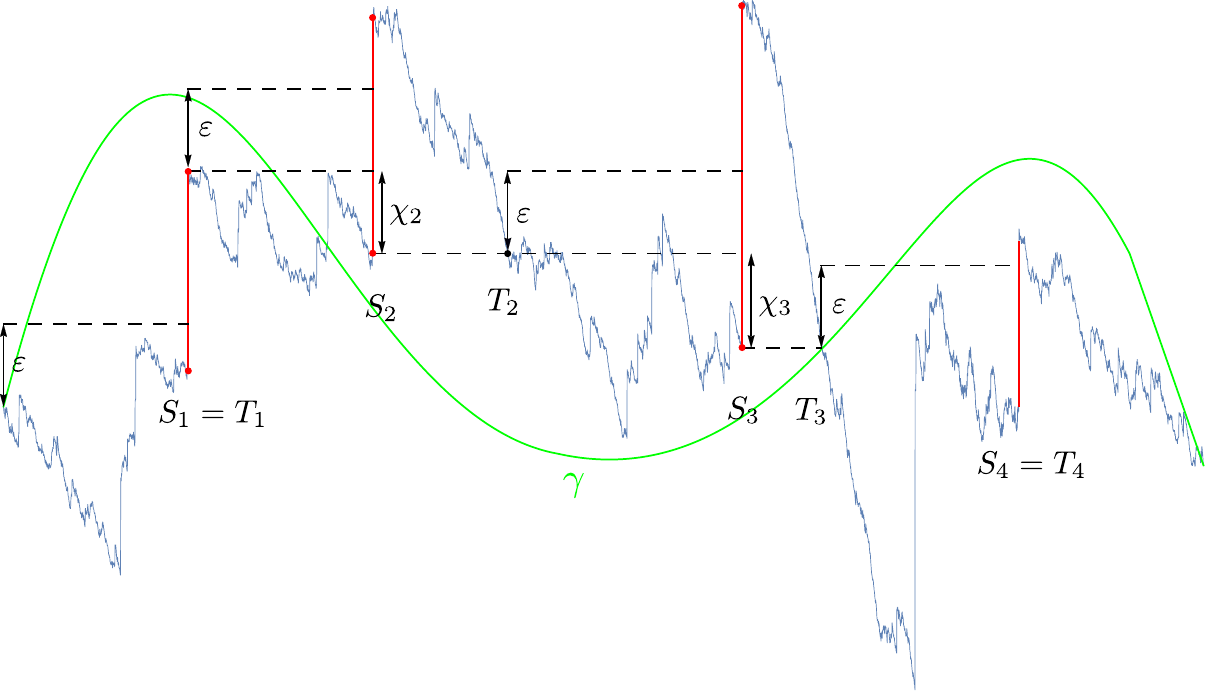}
\caption{Example of a curve $\gamma$ (green) with corresponding word 
$W^\eps(\gamma)=\tth\ttb\ttb\tth$.}
\label{fig:word}
\end{figure} 

To each $k$ such that $w_k=\ttb$, we associate a random
variable
\begin{equation}
\chi_k=\tfrac1\eps (X_{T_{k-1}}-X_{S_{k}-}),
\end{equation}
which measures how far `below' $X_{T_{k-1}}$ we find
the bottom of the jump at time $S_{k}$.
The values of $\chi_k$ are in the range $[-1,\infty)$
and
the distribution of $\chi_k$ does not depend on $\eps$. Note that for a fixed word $w$, by the strong Markov property the processes
$(X_{t+T_k}-X_{T_k})_{0\leq t\leq S_{k+1}-T_k}$ 
for $k=0,\dotsc,\ell-1$ are i.i.d.
In particular, the $\chi_k$ are i.i.d. of law $\chi$.

In the next subsection we shall derive estimates valid for a fixed word $w$ but we shall later apply our results to random words obtained by exploring a path $\gamma$ at scale $ \varepsilon>0$. To get some intuition for the coming lemmas let us describe this construction right now.

\subsubsection{Word associated to a path at scale $ \varepsilon>0$} 
Fix $t\geq 0$. We consider a continuous path
$\gamma=(\gamma_x, \gamma_y)$  going from $(0,X_0)$ to $(t,X_t)$, 
with $\gamma_x$ non-decreasing, 
which does not cross $\mathbf{Slits}(X)$. For any $ \varepsilon>0$, we associate with this path and the scale $ \varepsilon>0$, a word 
$$W^\eps (\gamma)$$ on the letters $\{ \ttb, \tth\}$ as follows.
Build the word $W^\eps(\gamma)$ letter-by-letter:  we first wait for
time $S_{1}$ when $X$ exceeds level $ \varepsilon>0$. Since the path
$\gamma$ cannot cross the slit caused by the jump of $X$ at time
$S_{1}$ it must get around either above or below it. If it goes above,
then the first letter of $W^\eps (\gamma)$ is set to $\tth$; 
otherwise it is set to $\ttb$. In the second case, we wait for the
L\'evy process $X$ to come back to the point $X_{S_{1}-}$ and we
iterate the construction. The words stop after $\ell$ letters where 
$\ell = \inf\{k \geq 0 :S_{k+1} \geq t\}$. Note
that the random  
word $W^\eps(\gamma)$ then  depends on the whole process
$(X_s)_{0\leq s\leq t}$ via the path $\gamma$. See Figure
\ref{fig:word}. The word $W^\eps(\gamma)$ encodes partial information
about $ \mathbf{D}^\ast(0,t)$ and $  \mathrm{VarHt}(\gamma)$:

  \begin{lemma} With the above notation we have \label{lem:V-bounds}
    \begin{align}
      \label{eq:Dstar-up}
       \mathbf{D}^\ast(0,t)&\leq   \mathrm{VarHt}(\gamma) +2\eps\; \#\ttb(W^\eps(\gamma)) + 2 \varepsilon, \\
      \label{eq:V-lb}
      \mathrm{VarHt}(\gamma) &\geq \eps\; \#\tth(W^\eps(\gamma))+
          \eps\sum_{k:W_k^\eps(\gamma)=\ttb} \chi_k.
    \end{align}
    In particular, when $\gamma$ is a path minimizing $
    \mathbf{V}(0,t)$ as in Lemma \ref{lem:min} we can replace $
    \mathrm{VarHt}(\gamma)$ by $\mathbf{V}(0,t)$ in the above
    displays. 
  \end{lemma}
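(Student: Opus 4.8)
The plan is to prove both displayed inequalities for an arbitrary continuous $\gamma$ with $\gamma_x$ non-decreasing that does not cross $\mathbf{Slits}(X)$; the closing assertion then follows by specialising to the $\mathbf{V}(0,t)$-minimiser furnished by \refL{lem:min} (for which $\mathrm{VarHt}(\gamma)=\mathbf{V}(0,t)$). Both inequalities come from reading off the position of $\gamma$ at the stopping times $S_k,T_k$ underlying $W^\eps(\gamma)$, but they run in opposite directions: \eqref{eq:V-lb} records vertical travel that the $\geq\eps$-jumps force on $\gamma$, while \eqref{eq:Dstar-up} says $\gamma$ can be imitated by an admissible $\mathbf{D}^\ast$-chain at an extra cost of roughly $\eps$ per endpoint of each under-crossing.

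For \eqref{eq:V-lb} I would first set $r_k=X_{T_k}$, so that $r_0=X_0$ and, directly from the definitions, $r_k-r_{k-1}\geq\eps$ when $w_k=\tth$ (then $T_k=S_k$ and $X_{S_k}\geq X_{T_{k-1}}+\eps$) while $r_{k-1}-r_k=\eps\chi_k$ when $w_k=\ttb$ (then $X_{T_k}=X_{S_k-}$, by absence of negative jumps). Next, since $\gamma_x$ is continuous and non-decreasing and $0<S_1<\cdots<S_\ell<t$, the set $\{\theta:\gamma_x(\theta)=S_k\}$ is a non-degenerate interval, with $\gamma_x<S_k$ before it and $\gamma_x>S_k$ after it; not crossing the slit $\{S_k\}\times[X_{S_k-},X_{S_k}]$ then forces $\gamma_y$ to take a value outside $(X_{S_k-},X_{S_k})$ somewhere in this interval, and the letter $w_k$ records on which side, so there is a time $\sigma_k$ there with $\gamma_y(\sigma_k)\geq X_{S_k}$ if $w_k=\tth$ and $\gamma_y(\sigma_k)\leq X_{S_k-}$ if $w_k=\ttb$. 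The $\sigma_k$ are increasing; put $\sigma_0=0$, $u_k=\gamma_y(\sigma_k)$ and $\epsilon_k=\pm1$ according as $w_k=\tth$ or $\ttb$, so $u_0=r_0$ and $\epsilon_k(u_k-r_k)\geq0$. Then $\mathrm{VarHt}(\gamma)\geq\sum_{k=1}^{\ell}|u_k-u_{k-1}|\geq\sum_{k=1}^{\ell}\epsilon_k(u_k-u_{k-1})$, and writing $u_k=r_k+\epsilon_k\delta_k$ with $\delta_k\geq0$ a short telescoping (using $u_0=r_0$) identifies the last sum with $\sum_k\epsilon_k(r_k-r_{k-1})+\delta_\ell+\sum_{k=1}^{\ell-1}(1-\epsilon_k\epsilon_{k+1})\delta_k$, which is $\geq\sum_k\epsilon_k(r_k-r_{k-1})$ since $\epsilon_k\epsilon_{k+1}\in\{-1,1\}$ and $\delta_k\geq0$. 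Finally
\[
\sum_{k=1}^{\ell}\epsilon_k(r_k-r_{k-1})=\sum_{k:\,w_k=\tth}(r_k-r_{k-1})+\sum_{k:\,w_k=\ttb}(r_{k-1}-r_k)\ \geq\ \eps\,\#\tth(W^\eps(\gamma))+\eps\sum_{k:\,w_k=\ttb}\chi_k,
\]
which is \eqref{eq:V-lb}.

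For \eqref{eq:Dstar-up} the plan is to exhibit a chain $0=a_1,b_1,\dots,a_{p+1}=t$ that mimics the over/under pattern of $\gamma$ at scale $\eps$: along each maximal block of consecutive $\tth$-letters one inserts a single $\mathbf{D}^{\down}$-leg, contributing exactly the total height climbed over the corresponding $\geq\eps$-records, and for each $\ttb$-letter one inserts a single $\mathbf{D}^{\up}$-leg dipping to the bottom $X_{S_k-}$ of the jump that $\gamma$ goes under. One then checks that each $\mathbf{D}^{\down}$-leg is at most the height variation $\gamma$ spends on that block plus the $\eps$-gap between $\gamma$'s entry height and the current reference level, and that each $\mathbf{D}^{\up}$-leg is within an additive $\eps$ of $\gamma$'s own dip (with a second $\eps$ from reconnecting afterwards), the two $\eps$'s per $\ttb$-letter being the source of $2\eps\,\#\ttb$, while the leftover ``initial'' and ``final'' pieces cost at most $2\eps$. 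The essential point, and the main obstacle, is that the sub-$\eps$ excursions of $X$ between the $\geq\eps$-records must be traversed ``directly'' — under or over, whichever is free for $\mathbf{D}^\ast$ — so that they are never individually charged; this is exactly why a scale-$\eps$ decomposition is enough, but arranging the breakpoints $a_i,b_i$ so that every fine feature of $X$ is either skipped by a $\mathbf{D}^{\up}$- or $\mathbf{D}^{\down}$-leg or genuinely paid for by $\gamma$, and then summing the $\eps$-losses, is where the work lies (this is the analogue of the delicate $D\le D^\ast$ bound for the Brownian map). Once both inequalities hold for general admissible $\gamma$, applying them to the minimiser of \refL{lem:min} yields the final statement with $\mathbf{V}(0,t)$ in place of $\mathrm{VarHt}(\gamma)$.
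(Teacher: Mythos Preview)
Your argument for \eqref{eq:V-lb} is correct and is in fact a different route from the paper's. The paper groups $W^\eps(\gamma)$ into maximal runs of the same letter, uses the intermediate value theorem to produce times $t^\ast_j$ at run boundaries with $\gamma_y(t^\ast_j)=X(T_{k_j})$ exactly, and then applies only the block-level bound $\mathrm{VarHt}(\gamma)\geq\sum_j|\gamma_y(t^\ast_j)-\gamma_y(t^\ast_{j-1})|$. Your letter-by-letter telescoping with the signs $\epsilon_k$ bypasses the block decomposition and yields the sharper intermediate inequality $\mathrm{VarHt}(\gamma)\geq\sum_k\epsilon_k(r_k-r_{k-1})$.

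Your treatment of \eqref{eq:Dstar-up}, however, is a plan rather than a proof; you explicitly flag that ``this is where the work lies'' and then do not do it. As written the plan also does not close. Comparing a $\mathbf{D}^\up$-leg to ``$\gamma$'s own dip'' at a single $\ttb$-letter is ill-posed: $\gamma$ need not dip at all between two consecutive $\sigma_k$'s inside a $\ttb$-run, so there is no letter-by-letter height variation of $\gamma$ against which to charge your $2\eps$'s. The quantity that is actually controlled is the \emph{endpoint displacement}, not any localised behaviour of $\gamma$. Concretely, one builds an explicit auxiliary path $\gamma^\ast$ through the points $(T_k,X(T_k))$ (go up by $\eps$, across to the next $\geq\eps$-slit, then over or under according to the letter), checks that its trajectory realises an admissible $\mathbf{D}^\ast$-chain, and computes that $\mathrm{VarHt}(\gamma^\ast)=2\eps\,\#\ttb+\sum_k\epsilon_k(r_k-r_{k-1})$, with at most $2\eps$ extra for the segment after $T_\ell$. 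Feeding in your own inequality $\sum_k\epsilon_k(r_k-r_{k-1})\leq\mathrm{VarHt}(\gamma)$ from the first part then gives \eqref{eq:Dstar-up}. This is essentially the paper's argument (there phrased via block displacements and the bound $\mathrm{VarHt}(\gamma)\geq\sum_j|X(T^\ast_j)-X(T^\ast_{j-1})|$), but none of these steps appear in your proposal.
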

  \begin{proof} 
We decompose $W^\varepsilon(\gamma)$ into consecutive blocks of
$\tth$'s and $\ttb$'s:
\[
W^\varepsilon(\gamma)=\tth^{i_1}\ttb^{i_2}\tth^{i_3}
\dotsb \tth^{i_{m-1}} \ttb^{i_{m}}.
\]
We set $k_0=0$, $T^\ast_0=0$, and for $1\leq j\leq m-1$ we let
$k_j=i_1+\dotsb+i_j$ 
and $T^\ast_j=T_{k_j}$, and also set $T^\ast_{m}=t$.  Thus
the    $T^\ast_j$
delimit the times (for $X$) when $W^\varepsilon(\gamma)$ switches between
$\tth$ and $\ttb$ or vice versa.  
Note that $\gamma$ starts at $(0,0)=(T^\ast_0,X(T^\ast_0))$,
then goes above $(T^\ast_1,X(T^\ast_1))$, below 
$(S^\ast_2,X(S^\ast_2-))=(S^\ast_2,X(T^\ast_2))$,
 above $(T^\ast_3,X(T^\ast_3))$, and so on, ending at 
 $(T^\ast_m,X(T^\ast_m))=(t,X(t))$.  Using continuity, this means that
 there are times 
$0=t^\ast_0\leq t^\ast_1\leq t^\ast_2\leq\dotsb\leq t^\ast_m=1$
such that $\gamma_y(t^\ast_j)=X(T^\ast_j)$ for all $j$.  
We will define another
path $\gamma^\ast$ which satisfies
$\gamma^\ast(t^\ast_j)=(T^\ast_j,X(T^\ast_j))$
for all $0\leq j\leq m$.
For both bounds \eqref{eq:Dstar-up} and \eqref{eq:V-lb}
we will then use the simple estimate
\begin{equation}\label{eq:simple}
\mathrm{VarHt}(\gamma)
\geq \sum_{j=1}^{m} 
|\gamma_y(t^\ast_{j})-\gamma_y(t^\ast_{j-1})|
= \sum_{j=1}^{m} 
|\gamma_y^\ast(t^\ast_{j})-\gamma_y^\ast(t^\ast_{j-1})|.
\end{equation}
Now we define 
$\gamma^\ast(s)$ for $t^\ast_{j-1}\leq s\leq t^\ast_j$.  
The construction is best understood with a picture, see
    Figure~\ref{fig:geo}.   
\begin{itemize}[leftmargin=*]
\item For odd $j$, meaning that we are in a block of $\tth$'s, we let
  $\gamma^\ast$ iteratively
go first up by $\varepsilon$ from $(T_k,X(T_k))$, then
  across to the next slit at time $S_{k+1}=T_{k+1}$, then up along that slit to 
$(T_{k+1},X(T_{k+1}))$, and so on,
as on the left in Figure \ref{fig:geo}.
If the block of $\tth$'s is at the end of $W^\varepsilon(\gamma)$
(i.e.\ $j=m-1$ and $i_m=0$) then we finish $\gamma^\ast$
from $(T_\ell,X(T_\ell))$ in a similar way, going down 
from $(t,X(T_\ell)+\varepsilon)$ at the very end.
\item For even $j$, meaning that we are in a block of $\ttb$'s, we let
 $\gamma^\ast$  iteratively go up by $ \varepsilon$ 
from $(T_k,X(T_k))$, then across until
  time $S_{k+1}$, then  down along the slit to height 
$X(S_{k+1}-)=X(T_{k+1})$, then across to $(T_{k+1},X(T_{k+1}))$,
and so on,
 as on the right in Figure \ref{fig:geo}.
If the block of $\ttb$'s is at the end of $W^\varepsilon(\gamma)$
(i.e.\ $j=m$) then we finish $\gamma^\ast$
from $(S_\ell,X(S_\ell-))$ in a similar way, either straight across
and up if $T_\ell>t$, otherwise up by $\varepsilon$ from 
$(T_\ell,X(T_\ell))$, across, and finally down.
\end{itemize}

  \begin{figure}[!h]
   \begin{center}
   \includegraphics[width=16cm]{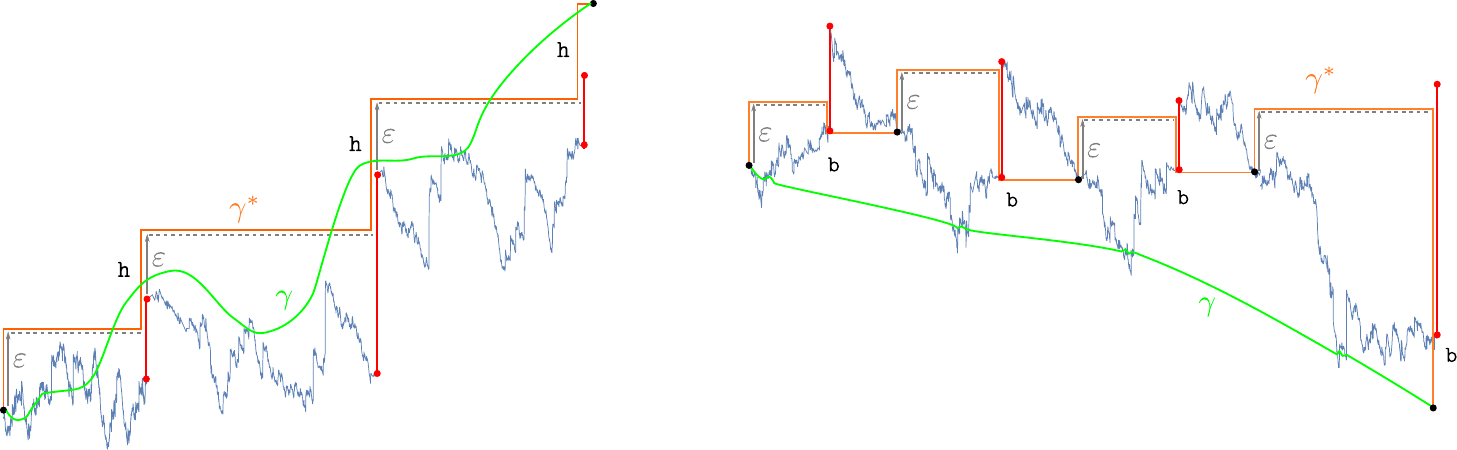}
   \caption{ \label{fig:geo} Illustration of the proof. In the case of
     a sequence of $ \tth$ steps (left), the height difference between
     the starting and the endpoint of $ \gamma$ can be matched by a
     path using the down-tree (i.e.\ $\mathbf{D}^\down$) only. In the case
     of a sequence of 
     $\ttb$ steps (right), using both $ \mathbf{D}^\up$ and $
     \mathbf{D}^\down$, we can approximate the height difference up to
     an error of $2 \varepsilon$ per letter $\ttb$.} 
   \end{center}
   \end{figure}

We now note the following about the height-variations 
$\mathrm{VarHt}(\gamma^\ast(s):t^\ast_{j-1}\leq s\leq t^\ast_j)$.
For simplicity, let us assume that $T^\ast_j<t$ (similar considerations
apply to the last bit of path).
\begin{itemize}[leftmargin=*]
\item For odd $j$, $\gamma^\ast_y(s)$ is non-decreasing in the
  interval, and we have
\begin{equation}\label{eq:useful1}
\mathrm{VarHt}(\gamma^\ast(s):t^\ast_{j-1}\leq s\leq t^\ast_j)
= \gamma_y^\ast(t^\ast_{j})-\gamma_y^\ast(t^\ast_{j-1})
\geq \varepsilon i_j.
\end{equation} 
Actually, the height-variation of
$\gamma^\ast$ realises the $\mathbf{D}^\down$-distance we
  between the endpoints:
\begin{equation}\label{eq:useful2}
\mathrm{VarHt}(\gamma^\ast(s):t^\ast_{j-1}\leq s\leq t^\ast_j)
=\mathbf{D}^\down(T^\ast_{j-1},T^\ast_j).
\end{equation} 
\item For even $j$, by summing the sizes of the vertical steps we get 
\begin{equation}\label{eq:useful3}
\mathrm{VarHt}(\gamma^\ast(s):t^\ast_{j-1}\leq s\leq t^\ast_j)
=\sum_{k=k_{j-1}}^{k_j-1} (2\varepsilon+\varepsilon \chi_k).
\end{equation}
 Summing instead the displacements  (with  sign), we get 
\begin{equation}\label{eq:useful4}
\gamma_y^\ast(t^\ast_{j})-\gamma_y^\ast(t^\ast_{j-1})
=-\sum_{k=k_{j-1}}^{k_j-1} \varepsilon \chi_k.
\end{equation}
Combined with \eqref{eq:useful3}, this gives
\begin{equation}\label{eq:useful5}
\mathrm{VarHt}(\gamma^\ast(s):t^\ast_{j-1}\leq s\leq t^\ast_j)\leq 
|\gamma_y^\ast(t^\ast_{j})-\gamma_y^\ast(t^\ast_{j-1})|
+2\varepsilon i_j.
\end{equation}
The way we chose $\gamma^\ast$  means that
 we also have 
\begin{equation}\label{eq:useful6}
\mathrm{VarHt}(\gamma^\ast(s):t^\ast_{j-1}\leq s\leq t^\ast_j)
\geq \sum_{k=k_{j-1}}^{k_j-1} \big[
\mathbf{D}^\down(T_{k},S_{k+1})+
\mathbf{D}^\up(S_{k+1},T_{k+1})\big].
\end{equation}
\end{itemize}

To prove \eqref{eq:Dstar-up}, we use the decomposition of $[0,t]$
suggested by \eqref{eq:useful2} and \eqref{eq:useful6}, the
identity in
\eqref{eq:useful1}, and the bound in \eqref{eq:useful5} to get
\[
\mathbf{D}^\ast(0,t)\leq \sum_{j=1}^{m} \big[
|\gamma_y^\ast(t^\ast_{j})-\gamma_y^\ast(t^\ast_{j-1})|+
2\varepsilon i_j 1\{j\mbox{ even}\}\big]+
2\varepsilon,
\]
where the last $2\varepsilon$ comes from the last bit of
$\gamma^\ast$.  
Now \eqref{eq:Dstar-up} follows from
 \eqref{eq:simple}.
Finally, \eqref{eq:V-lb} follows by using 
\eqref{eq:simple}
together with the bound in
\eqref{eq:useful1} and the identity in \eqref{eq:useful4}.
\end{proof}

\subsection{Probabilistic estimates for fixed words}

In this section we fix a (long) word $w$ and estimate its height
difference $\sum \chi_k$. Our goal is to get large deviations
estimates in order to prove a statement valid for \emph{all} words
simultaneously (in particular  to words $W^\eps(\gamma)$ coming from
the exploration of \emph{any} path at scale $ \varepsilon>0$). 
Recall that $\alpha \in (1,2)$.

We start by an estimate on the `underjump'  of law $\chi{=}\chi_k$, which by a standard result in renewal theory
is related to the size-biasing of the L\'evy measure. In particular $
\mathbb{P}( \chi \geq r)$ decays as $r^{-\alpha+1}$ as $r \to
\infty$. For our purposes we shall need:  
\begin{proposition}\label{prop:chi-dom}
Write $\beta=\alpha-1\in(0,1)$ and let $ \mathfrak{S}$ be a strictly positive stable random variable of index $\beta$, i.e. $ \mathbb{E}[ \mathrm{e}^{-\lambda \mathfrak{S}}] = e^{-\lambda ^\beta}$ for all $\lambda >0$. Then there are constants $a,b>0$ depending only on $\alpha$ such that we have the stochastic domination 
$$ -a + b \ \mathfrak{S} \leq \chi.$$
\end{proposition}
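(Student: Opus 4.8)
The plan is to identify the exact law of $\chi$ using renewal theory for the ladder structure of $X$, and then compare its tail and its behaviour near $-1$ with an affine image of a positive $\beta$-stable variable. First I would recall why $\chi$ is connected to size-biasing: the quantity $\eps(1+\chi_k)=X_{T_{k-1}}+\eps-X_{S_k-}$ is the ``overshoot deficit'' measured from level $X_{T_{k-1}}+\eps$, i.e.\ the undershoot of the first passage of $X$ above a level at distance $\eps$, relative to that level rescaled by $\eps$. Because $X$ is spectrally positive and $\alpha$-stable, the first passage above a level is achieved by a single jump, and the pair (undershoot, overshoot) at first passage has an explicit joint density built from the L\'evy measure and the renewal measure of the ascending ladder height process; by self-similarity the distribution of $\chi$ does not depend on $\eps$ (already noted in the text). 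So the first step is to write down, via the classical Vigon/Doney-type formula or directly from the scaling, that $1+\chi$ has a density proportional to $\int_0^{1}(\text{renewal density at }y)\,\Pi(\dd\text{-tail at }1-y+\cdot)$ type expression; what matters for us is only the two qualitative features below.

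Second, I would establish the right-tail estimate $\P(\chi\ge r)\asymp r^{-\beta}$ as $r\to\infty$. This follows because a large value of $\chi$ forces the process to make, from $X_{T_{k-1}}$, a large downward excursion of depth $\approx \eps r$ (without hitting $X_{T_{k-1}}-\eps r$... wait, rather reaching depth $\eps r$ below $X_{T_{k-1}}$ before climbing $\eps$ above it) and the probability that the running minimum of $X$ drops by $\eps r$ before the running maximum rises by $\eps$ scales, by the scaling property and the fact that $X$ drifts like its spectrally-positive skeleton, as $r^{-\beta}$ — indeed this is exactly the tail of the quantity studied by Bertoin in the theory of the ascending ladder process, whose ladder height process here is a $\beta$-stable subordinator. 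Concretely one can bound $\P(\chi\ge r)$ above and below by comparing with $\P(\underline X$ reaches $-r$ before $\overline X$ reaches $1)$ for the unit-scaled process, which is polynomial of order $r^{-\beta}$. Matching this with $\P(\mathfrak S\ge r)\sim c\,r^{-\beta}$ (the standard tail of a positive $\beta$-stable law) gives a constant $b$ with $\P(b\,\mathfrak S\ge r)\ge \P(\chi\ge r)$ for all $r$ large.

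Third, I would handle the left end: $\chi\ge -1$ deterministically, so choosing $a$ large enough makes $\P(-a+b\,\mathfrak S\le -1)$ irrelevant — the event $-a+b\,\mathfrak S\le x$ has probability $0$ only for $x<-a$, and since $\mathfrak S>0$ a.s., $-a+b\,\mathfrak S$ can be arbitrarily close to $-a<-1$. The cleanest route is: pick $b$ so the upper tails compare as above, then pick $a$ large so that for every $x$, $\P(-a+b\,\mathfrak S> x)\le \P(\chi>x)$; for $x\le -1$ the right side is $1$ and there is nothing to prove, while for $x>-1$ one uses that $\P(-a+b\,\mathfrak S>x)=\P(\mathfrak S>(x+a)/b)$ decays in $a$ uniformly on $x>-1$ because $(x+a)/b\ge (a-1)/b\to\infty$, so taking $a$ large makes this smaller than $\inf_{x>-1}\P(\chi>x)$... which is $0$, so this is not quite enough and one instead argues on the overlap region $[-1,R]$ by continuity/compactness and uses the tail comparison on $[R,\infty)$. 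So the actual structure is: use the tail comparison to fix $b$ and get domination on $[R,\infty)$ for some finite $R$; then on the compact band $x\in[-1,R]$ use that $\chi$ has a density bounded below and that $\mathfrak S$'s density near $0$ is $o(1)$...

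\medskip

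I expect the main obstacle to be making the left-end/intermediate comparison fully rigorous: the tail comparison is soft (both are regularly varying of index $-\beta$), but getting $-a+b\,\mathfrak S \preceq \chi$ as a genuine stochastic domination on the whole line requires controlling $\P(\chi\le x)$ from above by $\P(-a+b\,\mathfrak S\le x)$ for moderate $x$, which forces us to know that $\chi$ is not too concentrated near $-1$. The robust fix is: since $\P(-a+b\,\mathfrak S\le x)\to\P(b\,\mathfrak S\le x+a)$ and for any fixed $x$ this $\to 1$ as $a\to\infty$ while being continuous and increasing in $a$, and since $\P(\chi\le x)<1$ for every $x$ and the gap $1-\P(\chi\le x)$ is bounded below on any compact $x$-interval by regular variation beyond it, one chooses $b$ first from the tails, then $a$ uniformly. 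I would present it in that order, citing \cite{bertoin:1996,bertoin:1994} for the ladder-height description and the explicit first-passage law, and \cite[Ch.~3]{bertoin:1996} for positive stable tails; the computation that the ladder height subordinator of the spectrally-positive $\alpha$-stable process is $\beta$-stable with $\beta=\alpha-1$ is the one input I would state carefully, as everything else is then a routine regular-variation comparison.
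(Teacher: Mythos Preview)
Your approach can be made to work, but it is far more laborious than the paper's, and the confusion in your third step stems from not using the key simplification the authors exploit.

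The paper does not argue via asymptotics and ladder-height heuristics at all. It simply cites Doney--Kyprianou (Example~7) for the \emph{exact} survival function: there is a constant $c$ with
\[
\P(\chi>r)=c(r+1)^{-\beta}\qquad\text{for all }r\ge 0,
\]
and cites the standard stable bound $\P(\mathfrak S>r)\le C r^{-\beta}$ valid for \emph{all} $r>0$ (not just large $r$). With these two exact inequalities in hand the stochastic domination is a one-line computation: take $b\le (c/C)^{1/\beta}$ and $a\ge 2$, and check that $\P(b\,\mathfrak S>r+a)\le Cb^{\beta}(r+a)^{-\beta}\le c(r+1)^{-\beta}=\P(\chi>r)$ for $r\ge 0$, while on $(-1,0)$ one uses $\P(\chi>r)\ge \P(\chi>0)=c$ and on $(-\infty,-1]$ the right side is $1$.

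By contrast, your plan only establishes $\P(\chi>r)\asymp r^{-\beta}$ asymptotically, which forces you into the two-stage argument (tails on $[R,\infty)$, then compactness on $[-1,R]$). That argument is salvageable: once $b$ is fixed from the tail comparison you get domination for $r\ge R$, and on $[-1,R]$ you have $\P(\chi>r)\ge \P(\chi>R)>0$, so choosing $a$ large enough that $\P(b\,\mathfrak S>a-1)\le \P(\chi>R)$ finishes it. Your detour through ``$\inf_{x>-1}\P(\chi>x)=0$'' and densities bounded below is unnecessary --- you only need the infimum over the compact interval $[-1,R]$, which is strictly positive. But all of this machinery, and the careful ladder-height computation you flag as the ``one input to state carefully'', evaporates once you know the exact Pareto form of the tail of $\chi$.
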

\begin{proof}
  The density of $\chi$ is explicitly known, see
  \cite[Example 7]{doney-kyprianou} ($\chi$ has the distribution of
  $-X_{\tau_1^+-}$ in their notation).  In particular, there is some
  constant $c$ such that for all $r\geq 0$ we have
$\P(\chi>r)=c(r+1)^{-\beta}$.  Also, there is another constant $C>0$
such that $\P(\mathfrak{S}>r)\leq C r^{-\beta}$ for all $r\geq0$,
see \cite[Property~1.2.15]{stable-1994}.
Thus  we can pick $a,b>0$ as claimed.
\end{proof}

\begin{proposition}\label{prop:chi-sum}
  There is a constant $c_\alpha$ depending only on $\alpha$ such that
  the following holds.  
  Let $w\in\{\ttb,\tth\}^\ell$ be  a fixed word with
  $\#\ttb=m$.   Then for any $q>0$, writing 
$\kappa=\tfrac{\alpha-1}{2-\alpha}>0$, we have that
  \begin{equation}
    \P\Big(\sum_{k:w_k=\ttb} \chi_k\leq qm\Big)
    \leq \exp(-c_\alpha q^{-\kappa}m).
  \end{equation}
\end{proposition}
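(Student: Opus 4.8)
Since the word $w$ is fixed, the observation in the text that the blocks $(X_{t+T_k}-X_{T_k})_{0\le t\le S_{k+1}-T_k}$ are i.i.d.\ shows that $\{\chi_k : w_k=\ttb\}$ is a family of $m=\#\ttb(w)$ i.i.d.\ copies of $\chi$; relabel these $\chi_1,\dots,\chi_m$ and set $\Sigma_m=\sum_{k=1}^m\chi_k$. Since $\chi$ has infinite mean (its tail decays like $r^{-\beta}$ with $\beta=\alpha-1<1$), $\Sigma_m$ is typically of order $m^{1/\beta}\gg m$, so $\{\Sigma_m\le qm\}$ is a left-tail large-deviation event; we estimate it by a Cram\'er/Chernoff bound in which the correct exponent is produced by the stable stochastic domination of Proposition~\ref{prop:chi-dom}.

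\textbf{The estimate.} For every $\lambda>0$, Markov's inequality gives
\[
\P(\Sigma_m\le qm)=\P\big(e^{-\lambda\Sigma_m}\ge e^{-\lambda qm}\big)\le e^{\lambda qm}\,\E\big[e^{-\lambda\chi}\big]^m .
\]
By Proposition~\ref{prop:chi-dom}, $-a+b\,\mathfrak S$ is stochastically dominated by $\chi$, so, $x\mapsto e^{-\lambda x}$ being decreasing,
\[
\E\big[e^{-\lambda\chi}\big]\le \E\big[e^{-\lambda(-a+b\mathfrak S)}\big]=e^{\lambda a}\,\E\big[e^{-\lambda b\mathfrak S}\big]=e^{\lambda a}\,e^{-(\lambda b)^{\beta}},
\]
using the Laplace transform of the positive $\beta$-stable law $\mathfrak S$ with $\beta=\alpha-1$. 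Hence, with $Q:=q+a$,
\[
\P(\Sigma_m\le qm)\le \exp\!\Big(m\big[\lambda Q-b^{\beta}\lambda^{\beta}\big]\Big)\qquad(\lambda>0).
\]
Now $\lambda\mapsto \lambda Q-b^{\beta}\lambda^{\beta}$ attains its minimum over $\lambda>0$ at $\lambda_*=(\beta b^{\beta}/Q)^{1/(1-\beta)}$, with minimal value $-c\,Q^{-\beta/(1-\beta)}$ for an explicit $c=c(\alpha)>0$. Since $\beta=\alpha-1$ we have $1-\beta=2-\alpha$ and $\beta/(1-\beta)=(\alpha-1)/(2-\alpha)=\kappa$, so $\P(\Sigma_m\le qm)\le \exp\big(-c\,(q+a)^{-\kappa}m\big)$; on any range $q\ge q_0>0$ one has $(q+a)^{-\kappa}\ge(1+a/q_0)^{-\kappa}q^{-\kappa}$, which yields the claim with $c_\alpha=c(1+a/q_0)^{-\kappa}$. (Equivalently, one may first replace the $\chi_k+a$ by i.i.d.\ $b\mathfrak S_k$ via the domination, use self-similarity $\sum_{k\le m}\mathfrak S_k\eqd m^{1/\beta}\mathfrak S$, and conclude from the small-ball bound $\P(\mathfrak S\le\delta)\le e^{-c\delta^{-\kappa}}$ with $\delta$ a constant times $(q+a)m^{-(1-\beta)/\beta}$; the two routes coincide.)

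\textbf{Where the work is.} The computation is routine Cram\'er theory once Proposition~\ref{prop:chi-dom} is available; the points needing care are (i) tracking the stable index so that the exponent comes out exactly as $\kappa=(\alpha-1)/(2-\alpha)$ --- the crude bound $\E[e^{-\lambda\chi}]\le e^{\lambda}$ (valid since $\chi\ge-1$) gives only a useless linear exponent --- and (ii) the fact that the bound depends on the word $w$ only through $m=\#\ttb(w)$, which is precisely what will later permit a union bound over the at most $2^{\ell}$ words of length $\ell$. The additive shift $a$ inherited from Proposition~\ref{prop:chi-dom} (present because $\chi$ takes negative values) is the only slack; it is harmless in the regime of bounded $q$ used in the sequel.
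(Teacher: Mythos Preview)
Your proof is correct and essentially the same as the paper's. The paper uses the stochastic domination of Proposition~\ref{prop:chi-dom} to replace $\sum\chi_k$ by a $\beta$-stable subordinator, then applies self-similarity and the small-ball estimate $\P(\mathfrak S\le\delta)\le\exp(-c\,\delta^{\beta/(\beta-1)})$; you instead run the Chernoff bound directly, using the domination to bound the Laplace transform $\E[e^{-\lambda\chi}]\le e^{\lambda a-(\lambda b)^\beta}$ and then optimising in $\lambda$. As you yourself note, the two routes are the same computation unwound, since the small-ball bound for $\mathfrak S$ is itself proved by exactly that Chernoff/Laplace optimisation.

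One point where you are in fact more careful than the paper: both arguments yield $\exp(-c(q+a)^{-\kappa}m)$ rather than $\exp(-c_\alpha q^{-\kappa}m)$, and you correctly observe that the latter form only follows on a range $q\ge q_0>0$ (with $c_\alpha$ depending on $q_0$). The paper's proof has the same feature (its final display is $\exp(-c(\tfrac{q+a}{b})^{\beta/(\beta-1)}m)$), and in the applications (Lemma~\ref{lem:BNpq} and the bootstrap) only a fixed value of $q$ is used, so this slack is indeed harmless.
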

\begin{proof}
  Writing $\mathfrak{S}(t)$ for a $\beta=(\alpha-1)$-stable
  subordinator viewed at 
  time $t$,
  and $\mathfrak{S}_k$ for independent copies of
  $\mathfrak{S}(1)\overset{(d)}{=}\mathfrak{S}$,
  we have using Proposition \ref{prop:chi-dom} and the fact that the $(\chi_{k}~:~w_k = \ttb)$ are i.i.d.~for any given word $w$ that 
\begin{equation}\begin{split}
  \P\Big(\sum_{k:w_k=\ttb} \chi_k\leq qm\Big)
  &\leq \P\Big(\sum_{k:w_k=\ttb}\mathfrak{S}_k\leq \tfrac{q+a}b m\Big)
  = \P\big(\mathfrak{S}(m)\leq \tfrac{q+a}b m\big)\\
    &\underset{ \mathrm{scaling}}{=} \P\big(\mathfrak{S}(1)\leq \tfrac{q+a}b qm^{1-1/\beta}\big).
  \end{split}
\end{equation}
For $\delta\in(0,1)$ we have \cite[p. 221]{bertoin:1996}
 that
\begin{equation}
  \P(\mathfrak{S}(1)\leq \delta)\leq \exp(-c
  \delta^{\beta/(\beta-1)})
  \end{equation}
  for some constant $c$ depending only on $\alpha$.
  We get
  \begin{equation}
    \P\big(\mathfrak{S}(1)\leq \tfrac{q+a}b m^{1-1/\beta}\big)
    \leq \exp(-c(\tfrac{q+a}b)^{\beta/(\beta-1)}m)
  \end{equation}
  as claimed.
\end{proof}

\subsection{A first bound}
In this section we use the previous estimates to show a first bound: There exists \emph{some} $\delta>0$ such that almost surely, in the L\'evy process $X$ we have
  \begin{eqnarray} \label{eq:goalfirst}  \forall t\geq 0, \quad  \mathbf{V}(0,t) \leq \mathbf{D}^\ast(0,t) \leq (1 + \delta) \mathbf{V}(0,t). \end{eqnarray}

  We will eventually see that $\delta$ can be made arbitrarily small, however in this first step we will need to think about $\delta$ as a large constant.
  Recall
  that the inequality $ \mathbf{V} \leq \mathbf{D}^\ast$ is
  deterministically true by extending \eqref{eq:V<D*} from  
the case of $ \mathcal{E}$ to the case of $X$. 
Towards \eqref{eq:goalfirst}, fix $t >0$ and
  consider a continuous path $\gamma$ starting from $(0,0)$ and ending
  at $(t, X_t)$ minimizing the height variation as in Lemma
  \ref{lem:min}. We shall consider the associated word
  $W^\eps(\gamma)$ as $ \varepsilon \to 0$. Let us give first the
  rough idea of the proof to help the reader follow our
  steps. First, if the word is short, in the sense that $\#
  W^{\eps}(\gamma) =o(1/ \varepsilon)$ then \eqref{eq:Dstar-up}
  automatically gives $ \mathbf{D}^{*}(0,t) = \mathbf{V}(0,t)$. The
  problem might come from long words $W^{\eps}( \gamma)$ containing
  more that $ \varepsilon^{-1}$ letters, i.e. of paths $\gamma$
  oscillating frenetically around our L\'evy process $X$. We may seem
  in bad shape since there are $2^{ \eps^{-1}}$ words of length $
  \varepsilon^{-1}$. Our salvation will come from the large deviations
  bound in Proposition \ref{prop:chi-sum}  which roughly says that if
  the word has too many letters $\ttb$, then the path $\gamma$ has a
  large height variation. While the exponential control in Proposition
  \ref{prop:chi-sum} does not readily ``kill'' the entropic term $2^{
    \eps^{-1}}$,  it does so if we restrict to words with a small
  proportion of letters $\tth$. 

Let us proceed. Using the
  deterministic 
  bounds of Lemma \ref{lem:V-bounds} we can prove: 
  \begin{proposition} \label{prop:bad}
    Let $t >0$ be such that $ \mathbf{D}^\ast(0,t) > (1+ \delta)
    \mathbf{V}(0,t)$. Then eventually as $ \varepsilon \searrow 0$ we 
    have that $  \#  \ttb(W^\eps(\gamma)) \geq \varepsilon^{-1/2}$ and
    that $W^\eps(\gamma)$ belongs to the set  
    \[
      \mathrm{Bad}= \left\{ \mbox{words }w : \#\tth(w)\leq  \frac{3}{\delta} \#w, 
        \sum_{k:w_k=\ttb}\chi_k\leq  \frac{3}{\delta} \#\ttb(w)\right\}.
      \]
\end{proposition}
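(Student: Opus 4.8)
The plan is to combine the two deterministic estimates of Lemma~\ref{lem:V-bounds}, applied to the fixed $\mathbf{V}(0,t)$-minimizing path $\gamma$, with the lower bound on $\#\ttb(W^\eps(\gamma))$ forced by the hypothesis $\mathbf{D}^\ast(0,t)>(1+\delta)\mathbf{V}(0,t)$. Throughout write $w=W^\eps(\gamma)$, $N_\tth=\#\tth(w)$, $N_\ttb=\#\ttb(w)$, $N=\#w=N_\tth+N_\ttb$, and abbreviate $V=\mathbf{V}(0,t)$ and $D=\mathbf{D}^\ast(0,t)$. Since $\gamma$ realizes $V$, the bounds \eqref{eq:Dstar-up} and \eqref{eq:V-lb} read
\[
D\le V+2\eps N_\ttb+2\eps,\qquad V\ge \eps N_\tth+\eps\sum_{k:w_k=\ttb}\chi_k.
\]
I will use \eqref{eq:V-lb} in two ways: dropping the nonnegative term $\eps N_\tth$ gives $V\ge\eps\sum_{k:w_k=\ttb}\chi_k$, while the same computation that gives \eqref{eq:V-lb} also yields the complementary bound $V\ge\eps N_\tth$ (the blocks of $\tth$'s alone contribute at least $\eps$ of height variation per letter, cf.\ \eqref{eq:useful1}).

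First I would show that the word is long. From $D>(1+\delta)V$ and \eqref{eq:Dstar-up} one gets $\delta V<2\eps(N_\ttb+1)$, i.e.\ $V/\eps<\tfrac{2}{\delta}(N_\ttb+1)$. If $V>0$ this forces $N_\ttb>\tfrac{\delta V}{2\eps}-1$, which tends to $\infty$ as $\eps\searrow 0$; if instead $V=0$ then $D>0$ and \eqref{eq:Dstar-up} already gives $N_\ttb\ge\tfrac{D}{2\eps}-1\to\infty$. In either case $N_\ttb\to\infty$ (indeed $N_\ttb$ is of order $\eps^{-1}$), so eventually $N_\ttb\ge\eps^{-1/2}$, which is the first assertion, and also eventually $N_\ttb\ge 2$, so that $N_\ttb+1\le\tfrac{3}{2}N_\ttb$ and hence $V/\eps<\tfrac{3}{\delta}N_\ttb$.

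It then remains to check that $w\in\mathrm{Bad}$ for all small $\eps$. From $V\ge\eps N_\tth$ and $N_\ttb\le N$ we obtain $N_\tth\le V/\eps<\tfrac{3}{\delta}N_\ttb\le\tfrac{3}{\delta}N$, the first defining inequality of $\mathrm{Bad}$; from $V\ge\eps\sum_{k:w_k=\ttb}\chi_k$ we obtain $\sum_{k:w_k=\ttb}\chi_k\le V/\eps<\tfrac{3}{\delta}N_\ttb$, the second. (In the degenerate case $V=0$ these bounds simply say $N_\tth=0$ and $\sum_{k:w_k=\ttb}\chi_k\le 0$, which also put $w$ in $\mathrm{Bad}$.) I do not expect a genuine obstacle: the argument is essentially bookkeeping on top of Lemma~\ref{lem:V-bounds}. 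The only points requiring a little care are that the estimate $N_\ttb+1\le\tfrac{3}{2}N_\ttb$ needs $N_\ttb$ to be already large (hence ``eventually''), that $V\ge\eps N_\tth$ must be extracted from the proof of \eqref{eq:V-lb} rather than from its statement when $\sum_k\chi_k<0$, and that the uninteresting case $\mathbf{V}(0,t)=0$ is harmless.
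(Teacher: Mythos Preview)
Your argument is correct and follows essentially the same route as the paper's proof: both combine \eqref{eq:Dstar-up} and \eqref{eq:V-lb} to bound $N_\ttb$ from below and $N_\tth$ and $\sum\chi_k$ from above via the common quantity $V/\eps$. Your observation that the bound $V\ge\eps N_\tth$ must be read off from \eqref{eq:useful1} in the proof of Lemma~\ref{lem:V-bounds} rather than from the statement \eqref{eq:V-lb} itself (since $\sum_k\chi_k$ can be negative) is well taken; the paper simply cites \eqref{eq:V-lb} at that step, but what is really being used is exactly the fact you identify.
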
 
\begin{proof}
  The first assertion is easy to prove.
  Indeed if the number of $\ttb$ letters in $W^\eps(\gamma)$ is 
$<\eps^{-1/2}$ infinitely often as $\eps\to0$
  then  \eqref{eq:Dstar-up} 
gives $ \mathbf{D}^\ast(0,t)\leq  \mathbf{V}(0,t)+4\eps^{1/2}$
meaning that $ \mathbf{D}^\ast(0,t)= \mathbf{V}(0,t)$. 
For the second assertion,  if $t>0$ is such that $ \mathbf{D}^\ast(0,t) > (1+ \delta) \mathbf{V}(0,t)$  then \eqref{eq:Dstar-up} 
gives for any $\eps>0$ with $W= W^\eps(\gamma)$
\begin{equation} \label{eq:11}
\mathbf{V}(0,t)+2\eps\#\ttb(W) + 2 \varepsilon \underset{\eqref{eq:Dstar-up}}{\geq}  \mathbf{D}^\ast(0,t) \underset{ \mathrm{assumpt.}}{>}(1+\delta) \mathbf{V}(0,t).
\end{equation}
Consequently $\#\ttb(W)> \tfrac\delta2\eps^{-1}  \mathbf{V}(0,t)- 1$.
At the same time, \eqref{eq:V-lb} gives 
$\#\tth(W)\leq  \eps^{-1}   \mathbf{V}(0,t)$.  Putting these together we conclude
that eventually we have
$\#\tth(W)\leq \tfrac3\delta\#\ttb(W) \leq\tfrac3\delta \#W$.
Moreover, using \eqref{eq:V-lb}  again, we also have
\begin{equation}
\eps\sum_{k:W_k=\ttb} \chi_k \underset{\eqref{eq:V-lb}}{\leq}  \mathbf{V}(0,t)  \underset{ \eqref{eq:11}}{<} \frac{2 \varepsilon}{\delta}(\#\ttb(W) +1),
\end{equation}
and thus eventually we have $\sum_{k:w_k=\ttb}\chi_k\leq
\frac{3}{\delta} \#\ttb(W)$, using the fact that $\#\ttb(W) \to \infty $ by the first
part of the proof.
\end{proof}

Given the previous lemma, the next result finishes the proof of
\eqref{eq:goalfirst}. 
\begin{lemma} \label{lem:BNpq}
  For $\delta >0$ large enough, with probability one, the set of words
  of length $\ell$ belonging to $ \mathrm{Bad}$ is eventually empty as
  $\ell\to\infty$. 
\end{lemma}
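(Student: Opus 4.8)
The plan is to bound, for each fixed length $\ell$, the probability that \emph{some} word of length $\ell$ lies in $\mathrm{Bad}$, and then apply Borel--Cantelli. The crucial observation — which is exactly what the first condition in the definition of $\mathrm{Bad}$ buys us — is that once $\delta$ is large, membership in $\mathrm{Bad}$ forces a word to contain very few letters $\tth$. Concretely, fix $\delta\ge 6$: if $w$ has length $\ell$ and $w\in\mathrm{Bad}$, then $\#\tth(w)\le \tfrac3\delta\ell\le\tfrac12\ell$, hence $m:=\#\ttb(w)\ge(1-\tfrac3\delta)\ell\ge\tfrac12\ell$. This does double duty: it restricts the number of candidate words, and it guarantees that $\#\ttb(w)$ is of order $\ell$, so that Proposition~\ref{prop:chi-sum} is strong enough.

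For the counting step I would use the standard entropy bound for partial binomial sums: the number of words of length $\ell$ with at most $\tfrac3\delta\ell$ letters $\tth$ is at most $2^{\ell\, H_2(3/\delta)}=e^{\eta(\delta)\ell}$, where $H_2$ is the binary entropy function and $\eta(\delta):=H_2(3/\delta)\log 2\to 0$ as $\delta\to\infty$. For the per-word step, recall that for a fixed word $w$ the variables $(\chi_k:w_k=\ttb)$ are i.i.d.\ of law $\chi$, so Proposition~\ref{prop:chi-sum} with $q=3/\delta$ gives
\[
\P\Bigl(w\in\mathrm{Bad}\Bigr)\le \P\Bigl(\textstyle\sum_{k:w_k=\ttb}\chi_k\le \tfrac3\delta\, m\Bigr)\le \exp\bigl(-c_\alpha(\delta/3)^{\kappa}m\bigr)\le \exp\bigl(-\tfrac12 c_\alpha(\delta/3)^{\kappa}\ell\bigr)
\]
for any word $w$ of length $\ell$ obeying the constraint of the first step (so $m\ge\ell/2$), where $\kappa=\tfrac{\alpha-1}{2-\alpha}>0$. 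A union bound over the $e^{\eta(\delta)\ell}$ relevant words then yields
\[
\P\bigl(\exists\, w\in\{\ttb,\tth\}^\ell:\ w\in\mathrm{Bad}\bigr)\le \exp\Bigl(\bigl(\eta(\delta)-\tfrac12 c_\alpha(\delta/3)^{\kappa}\bigr)\ell\Bigr).
\]
Since $\kappa>0$ we have $(\delta/3)^{\kappa}\to\infty$ while $\eta(\delta)\to 0$ as $\delta\to\infty$, so I can fix $\delta$ large enough that the exponent is $\le -c\,\ell$ for some $c=c(\alpha,\delta)>0$; the right-hand side is then summable in $\ell$, and Borel--Cantelli gives that almost surely no word of length $\ell$ lies in $\mathrm{Bad}$ for all $\ell$ large, which is the claim.

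The only real point requiring care is matching the entropy cost of the union bound against the large-deviation decay: a naive union bound over all $2^\ell$ words would fail, so one genuinely needs both that $\mathrm{Bad}$ restricts to words with a small proportion of $\tth$'s (controlling the number of words via $\eta(\delta)\to0$) and that the exponent $\kappa$ is strictly positive, i.e.\ $\alpha\in(1,2)$, so that the decay rate $(\delta/3)^{\kappa}$ can be made to dominate $\eta(\delta)$ by taking $\delta$ large. Everything else is a routine union bound plus Borel--Cantelli.
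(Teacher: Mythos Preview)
Your proof is correct and follows essentially the same approach as the paper: restrict to words with at most a $3/\delta$ fraction of $\tth$'s, bound their number by the binary-entropy estimate, apply Proposition~\ref{prop:chi-sum} to each such word, and choose $\delta$ large so that the large-deviation rate dominates the entropy cost, then conclude by Borel--Cantelli. The only cosmetic differences are that the paper phrases the union bound as a first-moment computation on $\#\mathcal{B}(\ell,p,q)$ and keeps the factor $(1-p)$ rather than your cruder $m\ge\ell/2$.
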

\begin{proof}
For $\ell \geq 1$, $p\in (0,\tfrac12)$ and $q>0$ define 
\[ 
\mathcal{B}(\ell,p,q)=
\left\{ \mbox{ words }w : \#w = \ell, \#\tth(w)\leq p \#w, 
\sum_{k:w_k=\ttb}\chi_k\leq q \#\ttb(w)\right\}.
\]
This is a random set of words because of the variables $\chi_{k}$
which depend on $X$. 
It is well-known that the number of words $w$ satisfying 
$\#w = \ell$ and $\#\tth(w)\leq p \#w$ is at most 
$\exp(\ell[p\log(\tfrac1p)+(1-p)\log(\tfrac1{1-p})])$,
see e.g.~\cite[Theorem~3.1]{galvin14}.
Summing over all such words $w$, we
can use Proposition \ref{prop:chi-sum} to  bound
\begin{equation}\begin{split}
\E[\#\mathcal{B}(\ell,p,q)]&\leq 
  \sum_{\substack{w:\\\#\tth(w)\leq p \ell\\\#w = \ell}}
  \P\Big(\sum_{k: w_k=\ttb} \chi_k\leq q \#\ttb(w)\Big)\\
&\underset{ \mathrm{Prop.} \ref{prop:chi-sum}}{\leq} \sum_{\substack{w:\\\#\tth(w)\leq p \ell\\\#w = \ell}}
\exp\big(-c_\alpha q^{-\kappa}\#\ttb(w)\big)\\
& \leq
\exp\big(-\ell[c_\alpha q^{-\kappa}(1-p)
-p\log(\tfrac1p)-(1-p)\log(\tfrac1{1-p})]\big)
\end{split}
\end{equation}
where 
$c_\alpha,\kappa>0$ are from Proposition \ref{prop:chi-sum}. Now,
putting $p=q = \frac{3}{\delta}$ in the last display, we can choose
$\delta$ large enough so that $c_\alpha q^{-\kappa}(1-p)
-p\log(\tfrac1p)-(1-p)\log(\tfrac1{1-p}) >0$. 
Then the series 
$\sum_\ell \E[\#\mathcal{B}(\ell,\tfrac3\delta,\tfrac3\delta)]$ 
is convergent, and thus $\sum_\ell\#\mathcal{B}(\ell,\tfrac3\delta,\tfrac3\delta)$ is convergent almost surely from which it follows that 
$\mathcal{B}(\ell, \tfrac3\delta, \tfrac3\delta)$ 
is eventually empty almost surely. 
\end{proof}

\subsection{Bootstrapping}
In this section we finally prove $ \mathbf{D}^\ast = \mathbf{V}$ for the L\'evy process $X$. Our strategy is first to extend \eqref{eq:goalfirst} to all pairs of time $s,t \geq 0$. We then use these bounds to sharpen Lemma \ref{lem:V-bounds}, and by using the same kind of arguments as in the last section this gives $\delta=0$. Let us proceed.\medskip

We first claim that  in the L\'evy process $X$,  we have
  \begin{equation} \label{eq:goalfirstbis}   
\forall s,t \geq 0, \quad \mathbf{V}(s,t) \leq 
\mathbf{D}^\ast(s,t) \leq (1 + \delta) \mathbf{V}(s,t). 
\end{equation}
  This is true with probability one for any fixed $s=s_0 >0$ and any
  $t >s_0$ by \eqref{eq:goalfirst} and invariance by time translation. By countable intersection, the above display is true with
  probability one for any $0 \leq s \leq t$ with $s \in
  \mathbb{Q}$. The bound is then extended to all pairs $ 0 \leq s \leq
  t$ using the fact that both $s \mapsto  \mathbf{V}(s,t_0)$ and 
$s \mapsto  \mathbf{D}^\ast(s,t_0)$ are (almost surely) right-continuous in $s\leq t_0$.

We now use \eqref{eq:goalfirstbis} as a `deterministic input' in order
to sharpen the geometric estimates of Lemma \ref{lem:V-bounds}. 
Thus we suppose that we are given a right-continuous path $X$
satisfying \eqref{eq:goalfirstbis} and we aim at controlling the height
variation and the distance $ \mathbf{D}^\ast$ of a path $\gamma$
starting from $(0,0)$ and ending at $(t,X_t)$.
We assume that $\gamma$ is optimal, i.e.\  
$\mathrm{VarHt(\gamma)}=\mathbf{V}(0,t)$, as in 
Lemma \ref{lem:min}.
As in Lemma
\ref{lem:V-bounds} we consider the word $W^\eps(\gamma)$ and examine
the contributions to the height variation of $\gamma$ along sequences
of the same letter $ \ttb$ or $\tth$.  
We will use several definitions from the proof of that lemma.
Recall in particular the times 
$0=T^\ast_0<T^\ast_1<\dotsc<T^\ast_{m-1}<T^\ast_m=t$
which delimit the consecutive sequences of $\tth$'s or $\ttb$'s, 
the times $t^\ast_j$ such that 
$\gamma_y(t^\ast_j)=X(T^\ast_j)$, 
and the path $\gamma^\ast$ satisfying
$\gamma^\ast(t^\ast_j)=(T^\ast_j,X(T^\ast_j))$.
We define
\[
\mathrm{VarHt}_{\tth}(\gamma, \eps)=
\sum_{\substack{1\leq j\leq m-1\\j\text{ odd}}}
\mathrm{VarHt}(\gamma(s):t^\ast_{j-1}\leq s\leq t^\ast_j ),\quad
\mathrm{VarHt}_{\ttb}(\gamma, \eps)=
\sum_{\substack{1\leq j\leq m\\j\text{ even}}}
\mathrm{VarHt}(\gamma(s):t^\ast_{j-1}\leq s\leq t^\ast_j ),
\]
noting that the latter also includes the ``last bit of path''.  
We have
\begin{equation} \label{eq:bd1} 
\mathbf{V}(0,t)=
\mathrm{VarHt}(\gamma)=
\mathrm{VarHt}_{\tth}(\gamma, \eps)
+\mathrm{VarHt}_{\ttb}(\gamma,\eps).
\end{equation}

\begin{lemma} \label{lem:V-boundsimproved}
 We have 
 \begin{align} 
& \label{eq:bd2} \mathrm{VarHt}_{\tth}(\gamma, \eps) \geq 
\varepsilon \#\tth( W^\eps(\gamma)) \\
& \label{eq:bd3} \mathbf{D}^\ast(0,t) \leq \mathrm{VarHt}_{\tth}(\gamma, \eps) + (1+ \delta) \mathrm{VarHt}_{\ttb}(\gamma, \eps). \end{align}
\end{lemma}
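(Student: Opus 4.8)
The plan is to mirror the two bounds of Lemma~\ref{lem:V-bounds}, replacing the crude estimate~\eqref{eq:useful5} (which cost $2\varepsilon$ per letter $\ttb$) by the now-available analytic input~\eqref{eq:goalfirstbis}. First I would prove~\eqref{eq:bd2}: for odd $j$ the path $\gamma^\ast$ constructed in the proof of Lemma~\ref{lem:V-bounds} is monotone in height on $[t^\ast_{j-1},t^\ast_j]$ and climbs by at least $\varepsilon$ for each of the $i_j$ letters $\tth$ in that block, so by~\eqref{eq:simple} and~\eqref{eq:useful1},
\[
\mathrm{VarHt}_{\tth}(\gamma,\varepsilon)\geq\sum_{\substack{1\le j\le m-1\\ j\text{ odd}}}(\gamma_y^\ast(t^\ast_j)-\gamma_y^\ast(t^\ast_{j-1}))\geq\sum_{\substack{1\le j\le m-1\\ j\text{ odd}}}\varepsilon\, i_j=\varepsilon\,\#\tth(W^\varepsilon(\gamma)),
\]
which is exactly~\eqref{eq:bd2}. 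Here I use that for even $j$ the block of $\ttb$'s contributes a non-negative height variation, so it is harmless to drop it.

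For~\eqref{eq:bd3} the idea is to decompose $[0,t]$ along the switching times $T^\ast_0<T^\ast_1<\dots<T^\ast_m$ and bound $\mathbf{D}^\ast(0,t)\le\sum_{j}\mathbf{D}^\ast(T^\ast_{j-1},T^\ast_j)$ by the triangle inequality for $\mathbf{D}^\ast$. On the odd blocks I keep the old identity~\eqref{eq:useful2}: $\mathbf{D}^\ast(T^\ast_{j-1},T^\ast_j)\le\mathbf{D}^\down(T^\ast_{j-1},T^\ast_j)=\mathrm{VarHt}(\gamma^\ast(s):t^\ast_{j-1}\le s\le t^\ast_j)$, and then I would replace $\gamma^\ast$ by the optimal path $\gamma$ on this sub-block, i.e.\ observe that since $\gamma$ itself is a path from $(T^\ast_{j-1},X(T^\ast_{j-1}))$ to $(T^\ast_j,X(T^\ast_j))$ avoiding the slits, the piece of $\gamma$ on $[t^\ast_{j-1},t^\ast_j]$ has height variation at least $\mathbf{D}^\down$ between the endpoints, hence $\mathbf{D}^\ast(T^\ast_{j-1},T^\ast_j)\le\mathrm{VarHt}(\gamma(s):t^\ast_{j-1}\le s\le t^\ast_j)$, which is the odd-$j$ summand of $\mathrm{VarHt}_{\tth}(\gamma,\varepsilon)$. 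On the even blocks (the $\ttb$ sequences), instead of building $\gamma^\ast$ explicitly and paying $2\varepsilon$ per letter, I invoke~\eqref{eq:goalfirstbis} directly: $\mathbf{D}^\ast(T^\ast_{j-1},T^\ast_j)\le(1+\delta)\mathbf{V}(T^\ast_{j-1},T^\ast_j)$, and $\mathbf{V}(T^\ast_{j-1},T^\ast_j)\le\mathrm{VarHt}(\gamma(s):t^\ast_{j-1}\le s\le t^\ast_j)$ because the restriction of $\gamma$ to that interval is an admissible competitor for $\mathbf{V}$ between those endpoints. Summing the odd contributions and the even contributions gives
\[
\mathbf{D}^\ast(0,t)\le\mathrm{VarHt}_{\tth}(\gamma,\varepsilon)+(1+\delta)\,\mathrm{VarHt}_{\ttb}(\gamma,\varepsilon),
\]
which is~\eqref{eq:bd3}. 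The last bit of path (between $T^\ast_{m-1}$ and $t$, if the final block behaves specially) is absorbed into $\mathrm{VarHt}_{\ttb}(\gamma,\varepsilon)$ exactly as in Lemma~\ref{lem:V-bounds}; when the final block is a $\tth$-block it goes into $\mathrm{VarHt}_{\tth}$ with the same monotonicity argument.

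The one delicate point—and the step I expect to require the most care—is justifying that the restriction of the optimal path $\gamma$ to each subinterval $[t^\ast_{j-1},t^\ast_j]$ is itself a legitimate slit-avoiding path between $(T^\ast_{j-1},X(T^\ast_{j-1}))$ and $(T^\ast_j,X(T^\ast_j))$ with $x$-coordinate staying in (a lift of) $[T^\ast_{j-1},T^\ast_j]$, so that it controls $\mathbf{D}^\down$ resp.\ $\mathbf{V}$ between those endpoints. This uses that $\gamma_x$ is monotone (Lemma~\ref{lem:min}) and that $\gamma_x(t^\ast_j)=T^\ast_j$ — i.e.\ the times $t^\ast_j$ are chosen so that $\gamma$ is at the correct $x$-position, not merely at the correct height; one should note that by continuity and monotonicity of $\gamma_x$ we may indeed pick the $t^\ast_j$ with $\gamma(t^\ast_j)=(T^\ast_j,X(T^\ast_j))$, not just $\gamma_y(t^\ast_j)=X(T^\ast_j)$, so that the pieces of $\gamma$ genuinely connect the desired points without wrapping around the extra slits. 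Everything else is bookkeeping of the kind already done in Lemma~\ref{lem:V-bounds}.
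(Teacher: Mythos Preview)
Your argument is essentially identical to the paper's: the same use of \eqref{eq:useful1} for \eqref{eq:bd2}, and for \eqref{eq:bd3} the same subadditivity of $\mathbf{D}^\ast$ along the switching times $T^\ast_j$, with \eqref{eq:useful1}--\eqref{eq:useful2} on the $\tth$-blocks and the input \eqref{eq:goalfirstbis} together with $\mathbf{V}(T^\ast_{j-1},T^\ast_j)\le\mathrm{VarHt}(\gamma|_{[t^\ast_{j-1},t^\ast_j]})$ on the $\ttb$-blocks. On the ``delicate point'' you raise, the paper simply writes ``Clearly'' for that last inequality and does not justify it further; be aware, though, that your proposed fix of choosing $t^\ast_j$ with $\gamma(t^\ast_j)=(T^\ast_j,X(T^\ast_j))$ is not in general possible (for odd $j$, when $\gamma_x$ crosses the jump-time $T^\ast_j$ the height $\gamma_y$ may lie strictly above $X(T^\ast_j)$), so if you want to make this airtight you should instead keep only $\gamma_y(t^\ast_j)=X(T^\ast_j)$ and argue that the endpoints can be adjusted by horizontal segments of zero height variation that avoid the slits.
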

\begin{proof}
For \eqref{eq:bd2}, note that by \eqref{eq:useful1} we have
\[
\mathrm{VarHt}_{\tth}(\gamma, \eps)\geq
\sum_{\substack{1\leq j\leq m-1\\j\text{ odd}}}
|\gamma_y(t_j^\ast)-\gamma_y(t_{j-1}^\ast) |
=\sum_{\substack{1\leq j\leq m-1\\j\text{ odd}}}
|\gamma_y^\ast(t_j^\ast)-\gamma_y^\ast(t_{j-1}^\ast) |
\geq \varepsilon \#\tth( W^\eps(\gamma)).
\]
For \eqref{eq:bd3}, by using subadditivity, \eqref{eq:useful1},
\eqref{eq:useful2}, 
as well as \eqref{eq:goalfirstbis}   we get
\[
\mathbf{D}^\ast(0,t)\leq 
\sum_{\substack{1\leq j\leq m-1\\j\text{ odd}}}
\mathbf{D}^\down(T^\ast_{j-1},T^\ast_j)
+
\sum_{\substack{1\leq j\leq m\\j\text{ even}}}
\mathbf{D}^\ast(T^\ast_{j-1},T^\ast_j)\leq
\mathrm{VarHt}_{\tth}(\gamma, \eps)+
(1+\delta) \sum_{\substack{1\leq j\leq m\\j\text{ even}}}
\mathbf{V}(T^\ast_{j-1},T^\ast_j).
\]
Clearly $\mathbf{V}(T^\ast_{j-1},T^\ast_j)\leq
\mathrm{VarHt}(\gamma(s):t^\ast_{j-1}\leq s\leq t^\ast_j )$,
which finishes the proof.
\end{proof}

With these improved geometric controls at hands we can now 
prove that we may take 
$\delta=0$ in \eqref{eq:goalfirstbis}. The idea is to show that if
$\delta >0$ then one can find $0<\sigma<\delta$ so that
\eqref{eq:goalfirstbis} holds with this $\delta$ replaced by
$\sigma$. By considering the smallest $\delta \geq 0$ so that
\eqref{eq:goalfirstbis} holds with probability $1$ we deduce indeed
that $\delta=0$.  

\begin{proposition}
  Suppose that \eqref{eq:goalfirstbis} holds with probability $1$ with
  some $\delta >0$. Then we can find $0<\sigma<\delta$ so that
  \eqref{eq:goalfirstbis} holds with probability $1$ with $\delta$
  replaced by $\sigma$. 
\end{proposition}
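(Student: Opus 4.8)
The plan is to mimic the first-bound argument but with the sharper inequality \eqref{eq:bd3} in place of \eqref{eq:Dstar-up}. Fix $t>0$ and an optimal path $\gamma$ as in Lemma~\ref{lem:min}, and consider the word $W^\eps(\gamma)$ as $\eps\searrow 0$. The key observation is that \eqref{eq:bd1} together with \eqref{eq:bd2} and \eqref{eq:bd3} gives
\[
\mathbf{D}^\ast(0,t)\leq \mathbf{V}(0,t)+\delta\cdot\mathrm{VarHt}_{\ttb}(\gamma,\eps)
\leq \mathbf{V}(0,t)+\delta\Bigl(\mathbf{V}(0,t)-\eps\,\#\tth(W^\eps(\gamma))\Bigr),
\]
so that if $\#\tth(W^\eps(\gamma))$ is a \emph{positive fraction} of $\mathbf{V}(0,t)/\eps$ — say $\#\tth(W^\eps(\gamma))\geq \rho\,\mathbf{V}(0,t)/\eps$ for some fixed $\rho>0$ — then $\mathbf{D}^\ast(0,t)\leq(1+\delta(1-\rho))\mathbf{V}(0,t)$, i.e.\ we win with $\sigma=\delta(1-\rho)<\delta$. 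So the only dangerous scenario is when, infinitely often as $\eps\to 0$, the word $W^\eps(\gamma)$ has very few $\tth$'s, $\#\tth(W^\eps(\gamma))<\rho\,\mathbf{V}(0,t)/\eps$; in that case almost all of the height variation must be spent on the $\ttb$-blocks. But \eqref{eq:V-lb} gives $\mathrm{VarHt}(\gamma)\geq \eps\sum_{k:W^\eps_k=\ttb}\chi_k$, and combined with $\mathrm{VarHt}(\gamma)=\mathbf{V}(0,t)$ and a lower bound on $\#\ttb(W^\eps(\gamma))$ (which, as in Proposition~\ref{prop:bad}, must go to infinity since otherwise \eqref{eq:Dstar-up} already forces $\mathbf{D}^\ast=\mathbf{V}$) this says $W^\eps(\gamma)$ lies in a set of the form
\[
\widetilde{\mathrm{Bad}}_\rho=\Bigl\{\text{words }w:\ \#\tth(w)\leq \tfrac{\rho}{1-2\rho}\,\#\ttb(w),\ \ \sum_{k:w_k=\ttb}\chi_k\leq C\,\#\ttb(w)\Bigr\}
\]
for an appropriate absolute constant $C$ (here the bound on $\sum\chi_k$ comes from $\eps\sum\chi_k\leq\mathbf{V}(0,t)$ together with $\#\ttb(W^\eps(\gamma))\gtrsim \eps^{-1}\mathbf{V}(0,t)$, exactly as in the proof of Proposition~\ref{prop:bad}).

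The second step is a first-moment bound on $\widetilde{\mathrm{Bad}}_\rho$, verbatim as in Lemma~\ref{lem:BNpq}. Writing $p=\tfrac{\rho}{1-2\rho+\rho}$ (the proportion of $\tth$'s allowed), the number of words of length $\ell$ with $\#\tth(w)\leq p\ell$ is at most $\exp\bigl(\ell[p\log\tfrac1p+(1-p)\log\tfrac1{1-p}]\bigr)$, while Proposition~\ref{prop:chi-sum} gives $\P\bigl(\sum_{k:w_k=\ttb}\chi_k\leq C\#\ttb(w)\bigr)\leq \exp(-c_\alpha C^{-\kappa}\#\ttb(w))\leq\exp(-c_\alpha C^{-\kappa}(1-p)\ell)$. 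Since $C$ is a fixed absolute constant and $p\log\tfrac1p+(1-p)\log\tfrac1{1-p}\to 0$ as $p\to 0$, we may choose $\rho>0$ small enough (hence $p$ small enough) that $c_\alpha C^{-\kappa}(1-p)-p\log\tfrac1p-(1-p)\log\tfrac1{1-p}>0$; then $\sum_\ell\E[\#\widetilde{\mathrm{Bad}}_\rho(\ell)]<\infty$, so by Borel--Cantelli the set of $\widetilde{\mathrm{Bad}}_\rho$-words of length $\ell$ is almost surely eventually empty. Consequently, for this fixed $\rho$, almost surely every $t>0$ with $\mathbf{D}^\ast(0,t)>(1+\delta(1-\rho))\mathbf{V}(0,t)$ leads (via the dangerous scenario above, which must then occur infinitely often) to $W^\eps(\gamma)\in\widetilde{\mathrm{Bad}}_\rho$ for arbitrarily large word-length, a contradiction. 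Hence \eqref{eq:goalfirst} holds with $\delta$ replaced by $\sigma:=\delta(1-\rho)<\delta$, and then exactly as in the derivation of \eqref{eq:goalfirstbis} from \eqref{eq:goalfirst} — translation invariance, countable intersection over rational $s$, and right-continuity of $s\mapsto\mathbf{V}(s,t_0)$ and $s\mapsto\mathbf{D}^\ast(s,t_0)$ — we upgrade this to $\mathbf{V}(s,t)\leq\mathbf{D}^\ast(s,t)\leq(1+\sigma)\mathbf{V}(s,t)$ for all $0\leq s\leq t$ with probability one.

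I expect the main obstacle to be bookkeeping the ``dangerous scenario'' cleanly: one must make sure that $\#\ttb(W^\eps(\gamma))\to\infty$ (so that the $\chi_k$-condition in $\widetilde{\mathrm{Bad}}_\rho$ is meaningful and the ``$+1$'' correction terms are negligible), and that the threshold $\rho$ can be chosen \emph{uniformly} — i.e.\ independent of $\delta$ and of $t$ — which is what makes the induction $\delta\mapsto\delta(1-\rho)$ collapse to $\delta=0$ when applied to the infimal admissible $\delta$. The point is that $\rho$ depends only on the absolute constants $c_\alpha,\kappa,C$ (not on $\delta$), so iterating gives $\delta\leq(1-\rho)^n\delta\to 0$; alternatively, as the paragraph preceding the proposition suggests, one argues directly that the infimum of all $\delta\geq 0$ for which \eqref{eq:goalfirstbis} holds a.s.\ must be a fixed point of $\delta\mapsto\delta(1-\rho)$, hence $0$. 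Everything else is a transcription of the arguments in Propositions~\ref{prop:bad}, \ref{prop:chi-sum} and Lemma~\ref{lem:BNpq} with the improved inequality \eqref{eq:bd3} substituted for \eqref{eq:Dstar-up}.
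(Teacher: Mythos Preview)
Your approach is essentially the paper's: use the sharpened inequality \eqref{eq:bd3} to win a factor whenever $\#\tth$ carries a positive fraction of the height variation, and otherwise force the word into a ``Bad'' set which is eventually empty by the first-moment argument of Lemma~\ref{lem:BNpq}.

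There is, however, a genuine error in the bookkeeping of constants --- exactly the point you flagged as the main obstacle. You claim that the constant $C$ in the constraint $\sum\chi_k\leq C\,\#\ttb$ is ``an appropriate absolute constant'' and consequently that $\rho$ can be chosen independently of $\delta$; this is not correct. The only available lower bound on $\#\ttb$ comes from \eqref{eq:Dstar-up} together with the standing assumption $\mathbf{D}^\ast(0,t)>(1+\sigma)\mathbf{V}(0,t)$, and this gives only $\#\ttb\geq\tfrac{\sigma}{2}\eps^{-1}\mathbf{V}(0,t)-1$. Since $\sigma$ is of order $\delta$, combining with $\eps\sum\chi_k\leq\mathbf{V}(0,t)$ yields $C$ of order $1/\delta$, not an absolute constant. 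Then $p$ (hence $\rho$) must be chosen small depending on $\delta$, so that $c_\alpha C^{-\kappa}(1-p)$, which is of order $\delta^\kappa$, beats the entropy term. This invalidates your iteration $\delta\leq(1-\rho)^n\delta\to0$ with a fixed $\rho$; what survives is only that for each $\delta>0$ there exists \emph{some} $\sigma=\sigma(\delta)<\delta$ that works. That is precisely the content of the proposition, and your ``alternative'' infimum argument is correct and is the one the paper actually invokes. For comparison, the paper tracks the $\delta$-dependence explicitly: it takes $q=4/\delta$, first splits on whether $\#W<\tfrac{\delta}{4}\eps^{-1}\mathbf{V}(0,t)$ (which already gives $\sigma=\delta/2$ via \eqref{eq:Dstar-up}), then on whether $\#\tth\geq p\,\#W$, and sets $\sigma=\tfrac{\delta}{2}\vee\delta\bigl(1-p\tfrac{\delta}{4}\bigr)$ with $p=p(\delta)$ chosen small enough for Borel--Cantelli.
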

\begin{proof} By the argument presented at the beginning of this
  subsection, it is sufficient to find $0< \sigma<\delta$ so that
  \eqref{eq:goalfirst} holds almost surely when $\delta$ is replaced by $\sigma$.
  To do this, let us  set
  \[
    \sigma=\tfrac\delta2\vee(\delta(1-p\tfrac\delta4))\ \ \in
    (0,\delta),
    \]
  where $p\in(0,\tfrac12)$ is to be chosen at the end of the
  proof. Suppose $t >0$ is such that
  \begin{eqnarray} \label{eq:badt}
    \mathbf{D}^\ast(0,t) > (1+ \sigma) \mathbf{V}(0,t),
  \end{eqnarray}
                and let $\gamma$ be an optimal path as in Lemma
\ref{lem:min} going from $(0,0)$ to $(t,X_t)$. We consider the word $
W^\eps(\gamma)$ coming from the exploration of $\gamma$ at scale $
\eps$ and write $W = W^\eps(\gamma)$ to simplify notation. Let us make
the following observations on the behavior of such words. 
\begin{itemize}[leftmargin=*]
\item 
We first note that $\#W \geq \ttb(W) \geq \eps^{-1/2}$ 
eventually;  otherwise we
would have $ \mathbf{V}(0,t) = \mathbf{D}^\ast(0,t)$  as already 
observed in Proposition \ref{prop:bad}.
\item
 Our second claim is that eventually $\#\tth(W) \leq p\# W$.
To see this, we begin by noting  that
if $\#W<\tfrac\delta 4\eps^{-1}V(0,t)$
then from \eqref{eq:Dstar-up} we already have
\[
 \mathbf{D}^\ast(0,t)\underset{\eqref{eq:Dstar-up}}{\leq}
 \mathbf{V}(0,t) + 2\eps \tfrac\delta4\eps^{-1} \mathbf{V}(0,t)  + 2
 \varepsilon
 \leq  (1+ \tfrac{\delta}{2})\mathbf{V}(0,t)+2 \varepsilon
 \leq (1+\sigma) \mathbf{V}(0,t),
\]
after letting $ \varepsilon \to 0$.
We therefore proceed under the assumption
$\# W\geq \tfrac\delta 4\eps^{-1} \mathbf{V}(0,t)$, and assume by contradiction that  $\#\tth (W)\geq p \# W$. 
This gives $\#\tth (W) \geq p\tfrac\delta4 \eps^{-1}  \mathbf{V}(0,t)$ and by \eqref{eq:bd2} we have 
$$   \mathrm{VarHt}_{\tth}(\gamma, \varepsilon) \geq  p\tfrac\delta4  \mathbf{V}(0,t).$$
We can thus write 
 \begin{eqnarray*} \mathbf{D}^\ast(0,t) 
&\underset{\eqref{eq:bd3}}{\leq}&   
(1+\delta)\mathrm{VarHt}_{\ttb}(\gamma,\eps)
+\mathrm{VarHt}_{\tth}(\gamma,\eps) 
 \underset{\eqref{eq:bd1}}{=} 
\mathbf{V}(0,t)+\delta \mathrm{VarHt}_{\ttb}(\gamma,\eps)\\
  &{ \leq} &  (1+\delta(1-p\tfrac\delta4)) \mathbf{V}(0,t),
\end{eqnarray*}
  and this yields  a contradiction given our definition of $\sigma$. 
\item
Our final claim is that if we set $q=\tfrac4\delta$, then we must have 
$\sum_{k:W_k=\ttb}\chi_k \leq q\#\ttb(W)$.  Otherwise combining
\eqref{eq:Dstar-up}, \eqref{eq:V-lb} and letting $\eps \to 0$ would
give $\mathbf{D}^\ast(0,t)\leq (1+\tfrac2q) \mathbf{V}(0,t)=
(1+\tfrac\delta2) \mathbf{V}(0,t)  
\leq (1+\sigma) \mathbf{V}(0,t)$ which is excluded by assumption.
\end{itemize}

Gathering-up our findings, the words $W^\eps(\gamma)$ corresponding to
explorations of minimizing paths going to times $t >0$ satisfying
\eqref{eq:badt} eventually belong to the set $ \mathcal{B}(\ell, p,
\frac{4}{\delta} )$ for some $\ell \geq \varepsilon^{-1/2}$ where
those sets were defined in the proof of Lemma \ref{lem:BNpq}. Arguing
exactly as in that proof, for our fixed $q = \frac{4}{\delta}$ we can
find $p$ small enough so that $\mathcal{B}(\ell, p, \frac{4}{\delta}
)$ is eventually empty almost surely. This implies that there is no
such $t >0$ satisfying \eqref{eq:badt}. This means that
\eqref{eq:goalfirst} holds for $\delta$ replaced by $\sigma$ and we
extend  to all pairs of times $s,t \geq 0$ using right-continuity as
in the beginning of this section.
\end{proof}

We can now finish the proof of Theorem \ref{thm:V=D*}:
\begin{proof}[Proof of Theorem \ref{thm:V=D*}]
  Thanks to the previous
  sections the equality $  \mathbf{V}(s,t) = \mathbf{D}^\ast(s,t)$ is
  granted for $s,t \geq 0$ in the stable L\'evy process $X$. To extend
  it to the excursion $ \ce$ we use local absolute continuity. More
  precisely, if $[x]_1 \in [0,1)$ denotes the fractional part for $x
  \in \mathbb{R}$, for  $0 < s_0 < t_0 <1$, the processes $
  \left(\ce(s_0+u) - \ce(s_0) : 0 \leq u \leq t_0-s_0\right )$ and $
  \left(\ce([t_0+u]_1)- \ce(t_0) : 0 \leq u \leq 1-t_0+s_0\right )$
  are absolutely continuous with respect to $ (X_u : 0 \leq u \leq
  t_0-s_0)$ and $ (X_u : 0 \leq u \leq 1-t_0+s_0)$ respectively. This
  follows from the Vervaat transform relating $ \ce$ to the normalized
  bridge $X^{ \mathrm{br}}$ of $X$  and  absolute continuity relation
  between $X^{\mathrm{br}}$ and $X$ (see
\cite{bertoin:1996} Chapter VIII.3, Formula (8)). Lemma \ref{lem:min}
shows that to compute $ \mathbf{V}(s_0,t_0)$ it is sufficient to
restrict to path whose $x$-coordinate is monotone, that is either go
from $s_0$ to $t_0$ or from $s_0$ to $0$ and then from $1$ to
$t_0$. In both cases, we can compare with one of the above pieces of
the L\'evy process $X$ and we deduce from the last section that $
\mathbf{D}^\ast(s_0,t_0) = \mathbf{V}(s_0,t_0)$ almost surely in $
\ce$. The result is extended to all $s,t$ by
right-continuity. \end{proof}

\section{Properties of the shredded spheres}

In this section we establish a few basic properties of the shredded spheres. We first
show that the Hausdorff dimension of $ \shred_\alpha$ is $\alpha \in
(1,2)$ and characterize the point identifications made by $ \mathbf{V}$ or $ \mathbf{D}^{*}$ in Theorem \ref{l:vd0}. Towards understanding the topology of $ \shred_{\alpha}$, we study the graph formed by the faces of $\shred_\alpha$ which are the scaling limits of the large multimers in $
\randmult$. We show that adjacent faces exist using the decrease
points of stable processes \cite{bertoin:1994} but leave the question of the  connectedness of the graph of faces open.\medskip

In this section we see $ \shred_\alpha$ as the quotient of $[0,1]$ by the equivalence relation $ \mathbf{V}=0$ endowed with the projection of the pseudo-distance $ \mathbf{V}$ or equivalently of $  \mathbf{D}^{\ast}$. The canonical projection $ [0,1] \to \shred_\alpha$ is denoted by $\pi$.

\subsection{Hausdorff dimension}
\label{sec:dim}
\begin{proposition} \label{prop:dim} For $\alpha \in (1,2)$, almost surely the Hausdorff dimension of $ \shred_\alpha$ is $\alpha$.
\end{proposition}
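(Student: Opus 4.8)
The strategy is the standard two-sided bound, upper bound via a covering/Hölder estimate and lower bound via a mass-distribution (energy) argument, transferred from the line to the quotient space $\shred_\alpha = [0,1]/\{\mathbf V = 0\}$ through the canonical projection $\pi$. Since $\mathbf V = \mathbf D^\ast$ by Theorem~\ref{thm:V=D*}, and $\mathbf D^\ast \le \mathbf D^\up$, the map $\pi$ is automatically Hölder-$1$ from $([0,1],\mathbf D^\up)$ to $\shred_\alpha$; combined with the known Hölder regularity of the stable tree coded by $\ce$ this should give the upper bound. The lower bound is where the real work lies, and here I would follow \cite{curien:2014} for stable looptrees essentially verbatim.

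For the \textbf{upper bound}, recall that the $\alpha$-stable excursion $\ce$ is a.s.\ locally Hölder of any exponent $<1-1/\alpha$, hence so is the tree pseudo-metric: $\mathbf D^\up(s,t) \le C|s-t|^{\gamma}$ for any $\gamma < 1-1/\alpha$, on a random compact interval. Since $\mathbf V = \mathbf D^\ast \le \mathbf D^\up$, the projection $\pi:[0,1]\to\shred_\alpha$ is itself $\gamma$-Hölder. Covering $[0,1]$ by $n$ intervals of length $1/n$ gives a covering of $\shred_\alpha$ by $n$ balls of radius $\le C n^{-\gamma}$, so $\dim_H\shred_\alpha \le 1/\gamma$; letting $\gamma \uparrow 1-1/\alpha$ yields $\dim_H \shred_\alpha \le \alpha/(\alpha-1)$. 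This is the wrong bound — it is $>\alpha$ — so the crude Hölder estimate does not suffice, and one must instead exploit the \emph{slits}: the key point, exactly as in \cite{curien:2014}, is that a $\mathbf V$-ball around $\pi(s)$ of radius $r$ is the $\pi$-image of a set that is typically an interval of $t$-length $\asymp r^{\alpha}$, because the vertical variation needed to travel around the tall slits forces $|s-t|$ to be at most polynomially larger. More precisely, one shows $\mathbf V(s,t) \ge c\,|s-t|^{1/\alpha}$ off a small exceptional set of pairs, by a first-moment/union-bound argument over dyadic scales using the scaling of $\ce$ and the fact that between $s$ and $t$ there is, with overwhelming probability, a jump of size $\gtrsim |s-t|^{1/\alpha}$ that must be circumvented; this upgrades the covering count to give $\dim_H \le \alpha$.

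For the \textbf{lower bound}, define on $\shred_\alpha$ the pushforward $\nu = \pi_*\mathrm{Leb}$ of Lebesgue measure on $[0,1]$. By Frostman's lemma it suffices to show that for every $\beta < \alpha$ the $\beta$-energy
\[
\int\!\!\int \frac{\nu(\ind x)\,\nu(\ind y)}{\mathbf V(x,y)^{\beta}} = \int_0^1\!\!\int_0^1 \frac{\ind s\,\ind t}{\mathbf V(s,t)^{\beta}}
\]
is a.s.\ finite, for which it is enough (Fubini) to prove $\E\!\int_0^1\!\int_0^1 \mathbf V(s,t)^{-\beta}\,\ind s\,\ind t < \infty$, i.e.\ that for Lebesgue-a.e.\ pair $\E[\mathbf V(s,t)^{-\beta}] $ is bounded, which by translation/scaling reduces to controlling $\P(\mathbf V(0,t) \le \lambda)$ for small $\lambda$. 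Since $\mathbf V(0,t) \ge |\ce(0)-\ce(t)| = \ce(t)$ and more usefully $\mathbf V(0,t) \ge$ (height of the largest slit strictly between times $0$ and $t$ that the path must cross), a path from $(0,\ce(0))$ to $(t,\ce(t))$ avoiding all slits must have vertical variation at least comparable to the tallest jump it has to get around; by the scaling property of $\ce$, the largest jump of $\ce$ in a time-window of length $t$ is of order $t^{1/\alpha}$, giving a bound of the rough shape $\P(\mathbf V(0,t)\le \lambda) \le \P(\text{no jump of size} > \lambda \text{ in } [0,t]) + (\text{lower-order}) \lesssim (\lambda/t^{1/\alpha})^{c}$ for small $\lambda$, hence $\E[\mathbf V(0,t)^{-\beta}] \lesssim t^{-\beta/\alpha}$, which is integrable in $t$ for $\beta<\alpha$. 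Feeding this back yields $\dim_H \shred_\alpha \ge \beta$ for all $\beta<\alpha$, completing the lower bound.

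\textbf{Main obstacle.} The delicate point is the quantitative lower tail estimate for $\mathbf V(0,t)$: one needs to turn the geometric statement ``a slit-avoiding path must climb over the tall jumps'' into a genuine probabilistic bound, uniformly enough in $t$ to integrate. The subtlety is that avoiding one tall jump might be cheap if the path can sneak past it near the bottom of the slit, so one must argue that \emph{either} the path pays in vertical variation to get above a tall jump, \emph{or} it must descend near $\ce(t_i-)$ and then the structure of $\ce$ near that time (another instance of the same problem at a smaller scale) forces an iterated cost — precisely the renewal/underjump mechanism already exploited via the variables $\chi_k$ in Lemma~\ref{lem:V-bounds} and Proposition~\ref{prop:chi-sum}. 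I expect that combining the scaling of $\ce$, the explicit tail of the underjump distribution $\chi$, and a multi-scale iteration as in \cite{curien:2014} gives the required estimate; modulo this, everything else is routine, and the proof mirrors the looptree case closely enough that I would present it by pointing to \cite{curien:2014} for the repetitive parts.
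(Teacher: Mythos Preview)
Your upper-bound argument has a genuine gap. The claim that $\ce$ is locally H\"older of exponent close to $1-1/\alpha$ is simply false: $\ce$ has positive jumps and is not continuous, and consequently neither $\mathbf D^\up$ nor $\mathbf D^\ast$ is H\"older in $(s,t)$ (both jump by $\Delta\ce(\tau)$ as one argument crosses a jump time $\tau$). You correctly note that this first attempt would anyway give only $\alpha/(\alpha-1)$; but your fallback --- ``one shows $\mathbf V(s,t)\ge c|s-t|^{1/\alpha}$ off a small exceptional set'' --- points in the wrong direction. A lower bound on the metric says that preimages of small balls are short, which is exactly the input to a \emph{lower} bound on dimension (via Frostman, indeed it is essentially your own lower-bound argument), not an upper bound on the covering number. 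What the paper does, following \cite{curien:2014}, is cover $\shred_\alpha$ by the images $\pi([t_i^{(\eps)},t_{i+1}^{(\eps)}))$ of the intervals between consecutive jump times of size $>\eps$. There are $O(\eps^{-\alpha})$ of them by the L\'evy measure, and since all jumps inside each interval are $\le\eps$, the oscillation of $\ce$ there --- hence the $\mathbf D^\up$-diameter, hence the $\mathbf V$-diameter --- is essentially of order $\eps$, so $\sum(\mathrm{diam})^\alpha$ remains bounded.

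For the lower bound your energy scheme is equivalent in spirit to the paper's mass-distribution argument, but the estimate $\P(\mathbf V(0,t)\le\lambda)\lesssim\P(\text{no jump}>\lambda\text{ in }[0,t])$ is not valid as written: a tall jump is a barrier for $\mathbf V$ only if its slit \emph{straddles} the height one is travelling at --- otherwise the path passes over or under it at zero vertical cost. You flag this under ``main obstacle'', but the iterated-underjump fix you sketch is heavier than needed. The paper instead proves a direct blocking-jumps lemma: around a uniform time $U$, almost surely for all small $\eps$ there exist jump times $S_\eps\in(U-\eps,U)$ and $T_\eps\in(U,U+\eps)$ with $\ce(R_\eps-)\le\ce(U)-\eps^{1/\alpha+\eta}$ and $\ce(R_\eps)\ge\ce(U)+\eps^{1/\alpha+\eta}$ for $R\in\{S,T\}$. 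Such slits genuinely trap $\pi(U)$, forcing $\mathbf V(s,U)\ge\eps^{1/\alpha+\eta}$ whenever $s\notin[U-\eps,U+\eps]$ and hence $\nu(B_r(\pi(U)))\le r^{\alpha-\eta'}$. The lemma follows by Borel--Cantelli from the explicit Poisson structure of the jump that takes $X$ from negative to positive within each excursion away from $0$.
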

\begin{proof}
Both the upper bound $\dim(\shred_\alpha)\leq\alpha$
 and the lower bound $\dim(\shred_\alpha)\geq\alpha$ 
can be proved very similarly
to the corresponding statements for random stable looptrees
\cite[Section 3.3]{curien:2014}, with only a small modification needed
for the lower bound (see also \cite{archer2019brownian}).  We give a brief outline of the argument and
indicate the necessary modifications.

For the upper bound we note that $\shred_\alpha$ can be covered by
the sets $\pi([t^{(\eps)}_i,t^{(\eps)}_{i+1}))$ where the
$t_i^{(\eps)}$ form an increasing enumeration of the times $t$ such
that $\Delta \ce_t>\eps$.
Due to the bound $ \mathbf{D}^\ast\leq  \mathbf{D}^\up$, the diameter of
$\pi([t^{(\eps)}_i,t^{(\eps)}_{i+1}))$ in $\shred_\alpha$ is at
most 
$2\sup\{|\ce(s)-\ce(t)|:s,t\in [t^{(\eps)}_i,t^{(\eps)}_{i+1})\}$.
The same calculations as in \cite[Section 3.3.1]{curien:2014} then give 
$\dim(\shred_\alpha)\leq\alpha$.

For the lower bound, as in \cite[Section 3.3.2]{curien:2014} it suffices to show
that for any $\eta>0$, almost surely
$\nu(B_r(\pi(U)))\leq r^{\alpha-\eta}$
for all $r>0$ sufficiently small, where 
\begin{itemize}
\item $U\in[0,1]$ is a uniform random variable independent of
  $\shred_\alpha$; 
\item $\nu(\cdot)$ is the push-forward of Lebesgue measure on $[0,1]$ under
  $\pi$; 
\item $B_r(\cdot)$ is the ball of radius $r$ in $\shred_\alpha$.
\end{itemize}
In place of \cite[Lemma 3.13]{curien:2014} we use the following:

\begin{lemma}\label{lem:blocking-jumps}
Fix $\eta>0$.  Almost surely, for every $\eps$ small enough, there are jump times
$S_\eps,T_\eps$ of $\mathcal{E}$ satisfying:
\begin{enumerate}
\item $S_\eps\in(U-\eps,U)$ and $T_\eps\in(U,U+\eps)$,
\item for $R=S$ or $T$,
\[
\mathcal{E}(R_\eps-)\leq \mathcal{E}(U)-\eps^{1/\alpha+\eta},
\qquad
\mathcal{E}(R_\eps)\geq \mathcal{E}(U)+\eps^{1/\alpha+\eta}.
\]
\end{enumerate}
\end{lemma}
See Figure \ref{f:jumps} for an illustration.
\begin{figure} [h]
 \centerline{\includegraphics{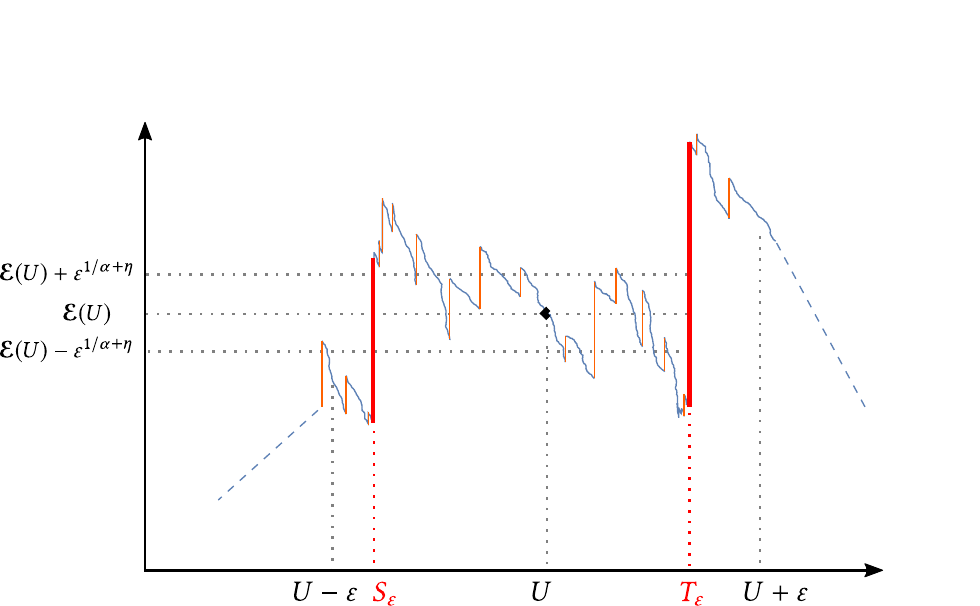}}
 \caption{Illustration of Lemma \ref{lem:blocking-jumps}.
On either side of $U$ there is a jump from a point at least
$\eps^{1/\alpha+\eta}$  below $\mathcal{E}(U)$  to a point at least 
$\eps^{1/\alpha+\eta}$ above.}
\label{f:jumps}
 \end{figure} 
In the conditions of the lemma, the two jumps at times $T_{ \varepsilon}$ and $S_{ \varepsilon}$ create vertical barriers so that  $ \mathbf{V}(s,U)\geq \eps^{1/\alpha+\eta}$
whenever $s\not\in[U-\eps,U+\eps]$. This implies the result.

To establish Lemma \ref{lem:blocking-jumps}
it suffices, as in \cite{curien:2014}, to consider the case when the
excursion $\mathcal{E}$ is replaced by an unconditioned
L\'evy process $(X_t:t\geq0)$ and $U$ is replaced by $0$.  The
lemma follows as in \cite{curien:2014} from an application of Borel--Cantelli along $ \varepsilon= 2^{-k}$ once we have proved that $\P(B_\eps^c)\leq C\eps^\gamma$ for some
$C,\gamma>0$, where 
\[
B_\eps=\{
\exists s\in[0,\eps]:
X_{s-}\in[-2\eps^{1/\alpha+\eta},-\eps^{1/\alpha+\eta}],
\Delta X_s\geq 3\eps^{1/\alpha+\eta}
\}.
\]
To prove the bound on $\P(B_\eps^c)$ we consider the excursions of
$(X_t:t\geq0)$ away from $0$.  (This differs from \cite{curien:2014}
where they consider excursions of $\overline X-X$ where $\overline X$
is the running supremum process.)  

Let $L_t$ be a local time of $X$ at $0$ and let $(g_j,d_j)$,
$j\in\mathcal I$, denote the excursion intervals.  Then, since $X$ has
only positive jumps, each $(g_j,d_j)$ contains a unique jump time
$h_j$ such that $X_{h_j-}<0$ and $X_{h_j}>0$.  The random measure 
\[
\sum_{j\in\mathcal I} \delta_{(L_{g_j},\Delta X_{h_j},-X_{h_j^-})}
\]
is a Poisson point process with intensity measure 
$dt\, \Pi(dx)\,  \mathbf{1}_{[0,x]}(r)dr$, see e.g.\ the remark after
Corollary 1 of \cite{bertoin:1992}.
Also, $L^{-1}$ is a subordinator \cite[Prop. V.4]{bertoin:1996} which
by the scaling property of $X$ is stable with index $1-1/\alpha$.
Therefore we can apply the same estimates as in \cite{curien:2014} to
obtain the required bound on $\P(B^c_\eps)$ and hence the result.
\end{proof}

\subsection{Point identification}
Our main result Theorem \ref{thm:V=D*} gives a pretty clear idea of
the metric in $\shred_\alpha$, but one could wonder whether the
quotient $ \mathbf{V}=0$ identifies more points than the trivial
identifications $\{ \mathbf{D}^\up =0\}$ and $ \{ \mathbf{D}^\down
=0\}$. The answer will require the use of the points of decrease of
L\'evy processes  studied by Bertoin in the 90's. \medskip

Recall that time $t \in \mathbb{R}$ is a (local) decrease 
time  ($f(t)$ is a decrease point) of a c\`adl\`ag function $f$ if for
some $ \varepsilon>0$ we have 
$$ f(t-x) \geq f(t) \geq f(t+x), \quad \mbox{ for all } x \in [0, \varepsilon].$$
Brownian motion almost surely has no decrease times (nor increase times)
by a famous result of Dvoretzky, Erd\H{o}s and Kakutani \cite{dvoretzky:1961}. However,
Bertoin \cite{bertoin:1994} has proved that spectrally positive stable
L\'evy processes almost surely possess decrease times but no increase
times. This was further studied in \cite{bertoin:1993,
  marsalle:1998}. The points of decrease a priori enable curves
$\gamma$ to "cross" the excursion $\ce$ and possibly perform
identifications for $ \mathbf{V}$ which were not permitted by $
\mathbf{D}^\up$ or $ \mathbf{D}^\down$ only. We will show that this is
not the case. The following result can be seen as an analog to the
point identification in the Brownian map \cite{legall:2007}.

\begin{theorem} \label{l:vd0}
Almost surely,  for all $s,t\in [0,1]$, if $\mathbf{V}(s,t)=0$ then either $\mathbf{D}^\up(s,t)=0$ or $ \mathbf{D}^\down(s,t)=0$.
\end{theorem}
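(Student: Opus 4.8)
The plan is to prove the nontrivial implication by showing that, almost surely, $\mathbf{V}(s,t)=0$ forces the excursion to stay on one side of the level $h:=\ce(s)=\ce(t)$ along the relevant arc; the converse is the easy inequality contained in \eqref{eq:V<D*}. As in the proof of Theorem~\ref{thm:V=D*} I would first transfer the statement to the unconditioned $\alpha$-stable process $X$ by local absolute continuity (the increase/decrease point results of Bertoin are phrased for $X$), prove it for $\ce$ on every $[s_0,t_0]\subset(0,1)$, and then deduce it for all $s,t$ by right-continuity of $\mathbf{V},\mathbf{D}^\up,\mathbf{D}^\down$.

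The next step is a pair of elementary reformulations. If $\mathbf{V}(s,t)=0$ then, by Lemma~\ref{lem:min}, there is a path of zero height variation, i.e.\ a horizontal segment at height $h:=\ce(s)=\ce(t)$ whose $x$-coordinate sweeps monotonically over one of the two arcs $A$ joining $s$ and $t$ on the cylinder, and avoiding $\mathbf{Slits}(\ce)$ is equivalent to: no jump time $\tau$ interior to $A$ satisfies $\ce(\tau-)<h<\ce(\tau)$ (call such an arc \emph{$h$-passable}). On the other side, $\ce\ge h$ throughout $[s\wedge t,s\vee t]$ gives $\mathbf{D}^\up(s,t)=0$, while $\ce\le h$ throughout either of the two arcs gives $\mathbf{D}^\down(s,t)=0$ (here one uses $\ce(s)=h$, so that the relevant supremum equals $h$). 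Hence it suffices to prove the \emph{main claim}: almost surely, for every $h$-passable arc $A$ with endpoints $s,t$ and $\ce(s)=\ce(t)=h$, one has $\ce\ge h$ on $A$ or $\ce\le h$ on $A$. (When $h>0$ and $A$ passes through $0$, only the second alternative is possible, which is precisely what sends that case into $\mathbf{D}^\down$.)

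For the main claim I would argue by contradiction: assume $\ce$ takes values both $>h$ and $<h$ along $A$. Since $\ce$ has no negative jumps, $\{\ce<h\}$ is relatively open, and walking along $A$ the process can pass from $\{\le h\}$ to $\{\ge h\}$ only in one of the following ways: (i) by a jump strictly over $h$ — excluded because $A$ is $h$-passable; (ii) by reaching height $h$ continuously from strictly below — impossible because $X$, hence $\ce$ locally, increases only by jumps (no upward creeping); (iii) by a jump landing exactly at height $h$, at the end of an excursion of $\ce$ below $h$. Dually, Bertoin's theorem \cite{bertoin:1994} that spectrally positive stable processes have no increase times removes the remaining "limit-point" passages from below to above. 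Consequently, once $\ce$ goes below $h$ along $A$ it can never return to $\ge h$ before the terminal endpoint of $A$; and if $\ce$ stays $\ge h$ for a while, its first passage to $\le h$ happens at a single decrease point $r$ with $\ce(r)=h$, after which $\ce\le h$ until the end of $A$. In this last situation $\ce<h$ somewhere past $r$, so the excursion of $\ce$ below $h$ that begins at $r$ must end; by the (standard) absence of upward creeping for the spectrally positive $\alpha$-stable process this happens through a jump strictly over $h$ at some time $d$. If $d$ is interior to $A$ this contradicts $h$-passability; if not, $d$ must be the terminal endpoint of $A$, which forces $\ce=h$ there and contradicts the strict overshoot — unless $r$ was already that endpoint, in which case $\ce\ge h$ on all of $A$. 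In every branch the main claim follows, hence $\mathbf{D}^\up(s,t)=0$ or $\mathbf{D}^\down(s,t)=0$.

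The genuine difficulty is that $h=\ce(s)$ is random, so the exclusions (ii), (iii) and the "no increase times" input, which are classical only for a \emph{fixed} level, have to be upgraded to statements that hold simultaneously for all $s,t$: namely that almost surely no jump of $X$ lands exactly at the height of a value of $X$ bounding an adjacent excursion, and that no decrease time of $X$ is followed (nor preceded) by a below-level excursion creeping back to that level. For a fixed level these facts follow from the explicit densities of the overshoot/undershoot of the $\alpha$-stable process at first passage (in the spirit of Proposition~\ref{prop:chi-dom}) and from Bertoin's analysis of increase and decrease times \cite{bertoin:1994,bertoin:1993,marsalle:1998}; promoting them to path properties via the fine structure of the decrease set is where the real work lies, and I expect this simultaneity step to be the main obstacle. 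Everything else is the soft deterministic reduction above.
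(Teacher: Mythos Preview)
Your reduction to the unconditioned process and the reformulation via a horizontal ``$h$-passable'' segment are correct and match the paper.  You also correctly isolate the crux: everything hinges on properties (no creeping upward, no jump landing exactly at a given level, no increase times) that are standard only for a \emph{fixed} level, whereas here $h=\ce(s)=\ce(t)$ is random and one must argue \emph{simultaneously for all $s,t$}.  You flag this simultaneity as ``the main obstacle'' --- and indeed it is --- but you do not propose a mechanism to overcome it.  This is the genuine gap.

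The difficulty is not merely technical.  The levels $h$ that arise are precisely values in the range of local infima/suprema of $X$, and for such exceptional levels the creeping and exact-landing exclusions can fail (for instance, $X$ \emph{does} reach its running infimum continuously, so ``no upward creeping'' is meaningless at those levels).  Your pathwise argument (ii)--(iii) therefore cannot be salvaged by a density/continuity argument alone.  The paper takes a completely different route: it never tries to exclude creeping or exact landings.  Instead it observes that if the horizontal segment at height $h$ avoids $\mathbf{Slits}(X)$ over an interval $[q_1,q_2]$, then $h$ must belong to the range $\mathcal{R}_{q_1,q_2}$ of the running supremum started at $q_1$, which is a stable regenerative set of Hausdorff dimension $\alpha-1<1$.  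If $X-h$ were to change sign infinitely often along $[s,t]$, one could find $K\geq 1+\lceil 1/(2-\alpha)\rceil$ disjoint rational windows over which $h$ simultaneously belongs to such regenerative sets; but independent regenerative sets of dimension $\alpha-1$ intersected $K$ times are almost surely empty (Hawkes/Bertoin).  This yields \eqref{eq:changesign}: at most one sign change.  The remaining case (one sign change through a decrease point, as in Figure~\ref{fig:V=0}) is then dispatched by a second intersection argument, now involving the set of decrease levels of Marsalle \cite{marsalle:1998} of dimension $2-\alpha$ together with two range-type sets of dimension $\alpha-1$; their codimensions again sum to more than $1$.  This dimension-counting via intersections of independent regenerative sets is the key idea you are missing, and it is what turns the fixed-level inputs into a statement valid for all $(s,t)$ at once.
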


\begin{proof}

If $0 \leq s \leq t \leq 1$ are such that $ \mathbf{V}(s,t)=0$, by
Lemma \ref{lem:min} this means that $h= \ce(s)= \ce(t)$ and that one
of the straight segments going from $(s, h)$ to $(t,h) $ or from
$(t,h) $ to $(s,h)$ around the cylinder, does not cross $
\mathbf{Slits}(\ce)$. To fix ideas, let us assume we are in the first
case. By local absolute continuity (see the proof
of Theorem \ref{thm:V=D*}), it is enough to argue with the L\'evy
process $X$.  

If $s<t$ are such that $ \mathbf{V}(s,t)=0$ in $X$ then in particular the segment $[s,t]\times \{h\} \subset \mathbb{R}^2$ does not cross $ \mathbf{Slits}(X)$. To begin with, let us show that   \begin{eqnarray} \label{eq:changesign} \mbox{$ X-h$ changes strict sign at most once on $[s,t]$}. \end{eqnarray}

 If $X-h$ changes sign a finite number $n$ of times on $[s,t]$, then if $n \geq 2$ there must be an increase  time of $X$ in $[s,t]$ which is excluded by \cite{bertoin:1994}. We just have to exclude the possibility that $n = \infty$.
 
\begin{figure}[!h]
 \begin{center}
 \includegraphics[width=1\linewidth]{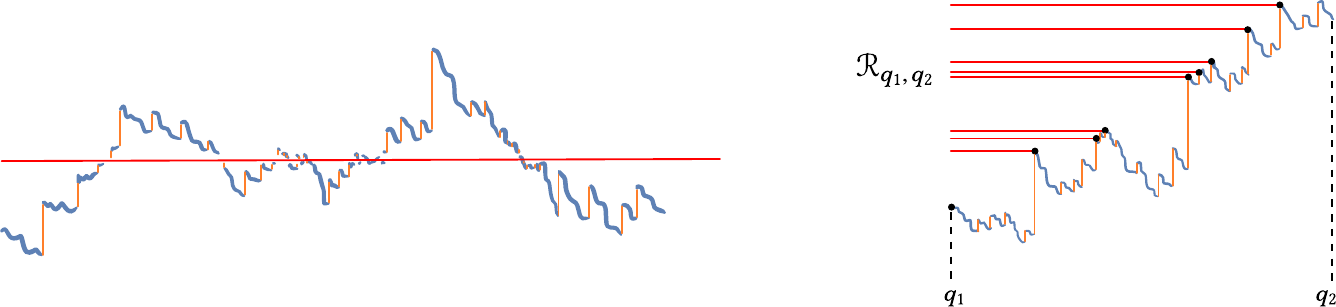}
 \caption{Left: Is it possible that the process $X$ oscillates around
   a horizontal line without jumping across it at any time? Right: the
   only possible heights not crossing $ \mathbf{Slits}(X)$ are among
   the red ones.} \label{fig:badline}

 \end{center}
 \end{figure}

To do so, fix two rational times $q_1<q_2$ and let us look at the possible levels $h\geq X_{q_1}$ so that the horizontal line at level $h$ does not intersect $ \mathbf{Slits}(X)$. Using Figure~\ref{fig:badline} (Right) it is easy to see that those possible heights are included in 
$$ \mathcal{R}_{q_1,q_2} =  \{ \overline{X}_s : s \in [q_1,q_2]\} \quad \mbox{ where }  \quad \overline{X}_s = \sup_{u \in [q_1,s]} X_u.$$
By standard results (see \cite[Lemma VIII.1]{bertoin:1996}) the set $
\mathcal{R}_{q_1,q_2}$ is a regenerative set, i.e.~a part of the range of a
subordinator. In our case, the subordinator is stable of index equal to $\alpha\rho=\alpha-1$ where $\rho =
\mathbb{P}(X_1>0)$ is the positivity parameter computed by Zolotarev's
formula, see \cite[p.~218]{bertoin:1996}. In particular this set has
Hausdorff dimension $\alpha-1 <1$.  In particular, for any $q_1<q_2<
q_3 < \cdots q_{2K}$ the intersection  
$$ \bigcap_{i=1}^{K-1} \mathcal{R}_{q_{2i-1},q_{2i}},$$
is an intersection of (part of) regenerative sets, which conditionally on their starting points $X_{q_1},
X_{q_3}, ... , X_{q_{2K-1}}$ are independent. Since those starting
points are almost surely distinct, it follows from
\cite[Example~1]{hawkes:1977} or \cite{bertoin:1999} that as soon as $K\geq 1+ \lceil
\frac{1}{2-\alpha}\rceil$ the intersection in the previous display is
almost surely empty.  

Performing the intersection over all countable choices of rationals
$q_1<q_2< q_3 < \cdots q_{2K}$ we deduce from the above consideration
that for any $h \in \mathbb{R}$, one cannot find rationals $q_1<q_2<
q_3 < \cdots <q_{2K}$ so that $X_{q_{2i-1}} < h < X_{q_{2i}}$ for $1
\leq i \leq K$ and so that $[q_1,q_{2K}] \times \{h \}$ does not
intersect $ \mathbf{Slits}(X)$. It follows that for any $s,t,h \in
\mathbb{R}$, if the segment $[s,t]\times \{h\} \subset \mathbb{R}^2$
does not cross $ \mathbf{Slits}(X)$, then $X-h$ changes strict sign at most
$K$ times. Together with the discussion just after \eqref{eq:changesign}, this finally proves \eqref{eq:changesign}. 

Coming back to our pair of identified points $s<t$, we deduce thanks
to \eqref{eq:changesign} that either $X-h$ does not change (strict)
sign, in which case we have $ \mathbf{D}^\up(s,t)=0$ or $
\mathbf{D}^\down(s,t)=0$, or otherwise it changes sign once and since
there are no increase times in $X$, the points must be identified
through a decrease time as is depicted in Figure~\ref{fig:V=0}. 

\begin{figure}[!h]
 \begin{center}
 \includegraphics[width=1\linewidth]{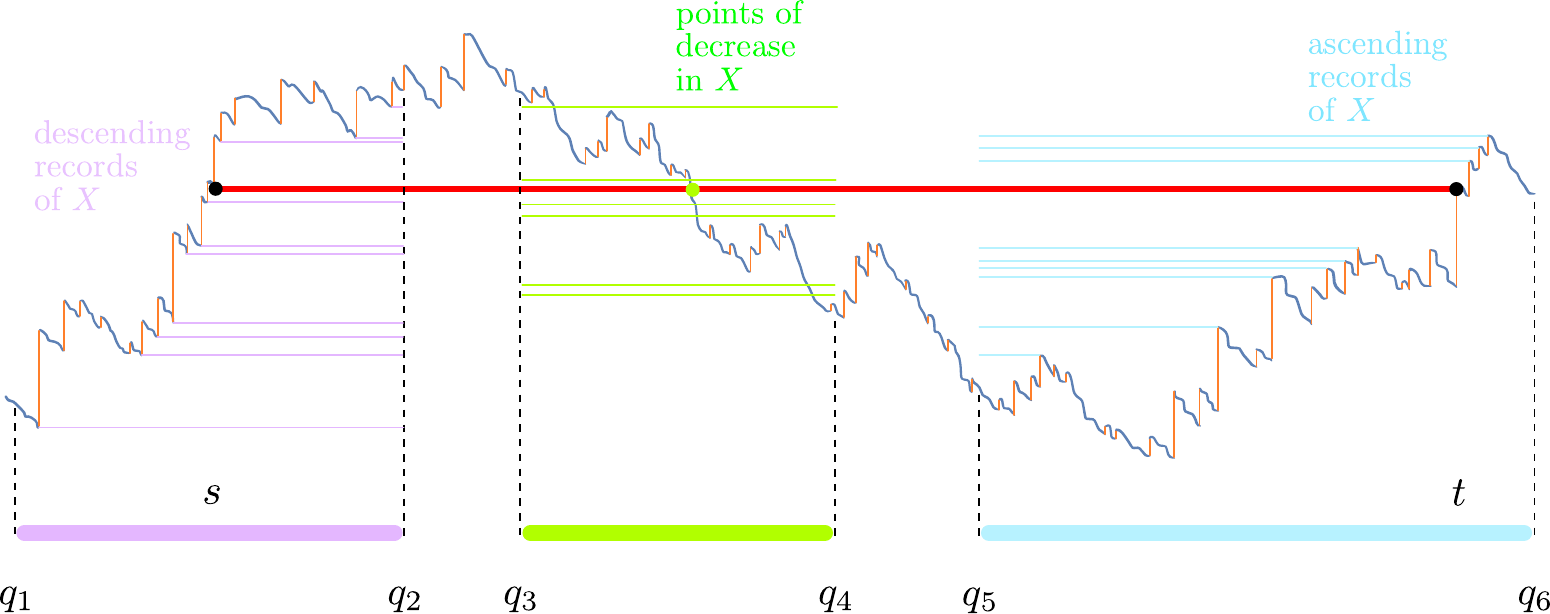}
 \caption{The last situation to treat: $s$ and $t$ are identified through a decrease point. \label{fig:V=0}}
 \end{center}
 \end{figure}
 In particular, we can find rational $q_1<s<q_2<q_3<q_4<q_5<q_6$ such that:
 \begin{itemize}
 \item level $h$ belongs to $
   \tilde{\mathcal{R}}_{q_1,q_2}=\{\underline{X}_u  : u \in
   [q_1,q_2]\}$ where $ \underline{X}_u = \inf_{v \in [u,q_2]} X_v$
   (purple in Figure~\ref{fig:V=0}),
  \item level $h$ belongs to $ \mathcal{R}_{q_5,q_6} =
    \{\overline{X}_u  : u \in [q_5,q_6]\}$ where $ \overline{X}_u =
    \sup_{v \in [q_5,u]} X_v$ (light blue in Figure~\ref{fig:V=0}),
    \item level $h$ belongs to  $ \mathcal{P}_{q_3,q_4} = \{ X_w :
      q_3<w<q_4,  \mbox{ such that }X_{a} \geq X_{w} \geq X_{b},
      \forall q_3\leq a\leq w \leq b \leq q_4\}$ (green in
      Figure~\ref{fig:V=0}). 
    \end{itemize}
    As above, the random sets $\tilde{\mathcal{R}}_{q_1,q_2} -X_{q_{2}}$, $ \mathcal{R}_{q_{5},q_{6}}-X_{q_{5}}$ and $  \mathcal{P}_{q_{3},q_{4}}-X_{q_{3}}$ are independent and of Hausdorff dimension respectively 
    $$ \mathrm{dim}(\tilde{\mathcal{R}}_{q_1,q_2}) =
    \mathrm{dim}(\mathcal{R}_{q_5,q_6}) = \alpha-1 \quad \mbox{ and }
    \quad \mathrm{dim}(\mathcal{P}_{q_3,q_4}) = 2- \alpha,$$ where the
    last dimension is computed in \cite{marsalle:1998}. The random sets $\tilde{\mathcal{R}}_{q_{1},q_{2}}-X_{q_{2}}$ and $ \mathcal{R}_{q_{5},q_{6}}-X_{q_{5}}$ are regenerative sets,  whereas $ \mathcal{P}_{q_{3},q_{4}} - \sup {\mathcal{P}_{q_{3},q_{4}}}$ is absolutely continuous with respect to a stable regenerative set, see \cite{marsalle:1998}. Since the sum of their codimensions is larger than $1$ we conclude as above
    that their intersections is almost surely empty. Performing an
    intersection over all possibles choices of the rationals, we
    deduce that the above situation cannot occur and the theorem is
    proved.
  \end{proof}

\subsection{Faces of $ \shred_\alpha$}
Recall that $\shred_\alpha$ may be realized as a weak limit of rescaled causal maps, according to Theorem \ref{thm:conv}. The positive jumps of the random path which encodes the causal map correspond to faces in the map (which appear as vertical duals of hard multimers). Therefore, in the continuum picture, we define a \emph{face} in $\shred_{\alpha}$ by relating it to a jump of the excursion of the  L{\'e}vy process $\ce$ which encodes $\shred_\alpha$. For each jump time $t$ of $\ce$  we
associate a face in $ \shred_{\alpha}$ of perimeter $2 \Delta
\ce(t)$ as follows: For each $s \in [
\ce(t-), \ce(t)]$ let $\ell_{t}(s)$
(resp.~$r_{t}(s)$) be the first instant on the left of $t$ (resp.~on
the right of $t$) such that  
$$ \ce(\ell_{t}(s)) = \ce(r_{t}(s)) =s.$$
To be more precise, we need to see $ \ce$ as indexed cyclically by
time and it may be that $\ell_{t}(s) > t$, but we always have the
cyclic ordering $\ell_{t}(s) \to t \to r_{t}(s)$. 

\begin{definition} The face $ \mathcal{F}_t$ in $ \shred_{\alpha}$ corresponding to a jump at $t$ is defined by $$ \mathcal{F}_t = \pi( \{ \ell_{t}(s) : s \in [ \ce(t-), \ce(t)]\}) \cup \pi( \{ r_{t}(s) : s \in [ \ce(t-), \ce(t)]\})$$
where $\pi: [0,1]\to \shred_\alpha$ denotes the projection to the quotient.
\end{definition}

The space $ \shred_\alpha$ possesses a countable number of faces. Due to the presence of decrease points in $\ce$, two faces may have a point in common as is indicated in Figure~\ref{fig:twofaces}. 
\begin {proposition} \label{prop:facestouch}
Almost surely, there exists a pair of faces in $\shred_\alpha$ which have points in common and the set of common points is a perfect set (every point in the set is a limit point of the set).
\end{proposition}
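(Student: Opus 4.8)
The plan is to exhibit the pair of faces explicitly, using a single decrease point of the coding excursion, and then to read off the perfectness of the intersection from the fractal structure of the decrease set. By the local absolute continuity between $\ce$ and the unconditioned $\alpha$-stable L\'evy process $X$ (exactly as in the proofs of Theorem~\ref{thm:V=D*} and Theorem~\ref{l:vd0}, via the Vervaat transform), it suffices to build the configuration for $X$ on a bounded interval and then transport it. All faces, the projection $\pi$, and $\mathbf V$ below are understood for $X$.

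First I would pick a decrease point $\rho$ of $X$ — which exists a.s.\ by Bertoin \cite{bertoin:1994} — and set $h=X_\rho$. Since $X$ has no negative jumps it does not creep upwards, so the first time $t_2>\rho$ with $X_{t_2}>h$ is a jump time with $X_{t_2-}<h<X_{t_2}$, and likewise $t_1:=\sup\{u<\rho:X_u<h\}$ is a jump time with $X_{t_1-}<h<X_{t_1}$; moreover $X\ge h$ on $(t_1,\rho)$ and $X\le h$ on $(\rho,t_2)$. Comparing with the definitions of $\ell$ and $r$ this already gives $r_{t_1}(h)\sim\rho\sim\ell_{t_2}(h)$ in $\mathbf V$ (a horizontal segment at height $h$ inside the slab crosses no slit), hence $\pi(\rho)\in\mathcal F_{t_1}\cap\mathcal F_{t_2}$; in particular these two distinct faces meet. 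More generally set
\[
D=\bigl\{\tau\in(t_1,t_2):\ X_u>X_\tau\ \forall u\in(t_1,\tau),\ \ X_u<X_\tau\ \forall u\in(\tau,t_2)\bigr\},
\]
the set of \emph{strict window decrease points} of $X$ on $[t_1,t_2]$, and let $D'\subseteq D$ consist of those $\tau$ whose height $X_\tau$ lies in a neighbourhood of $h$ small enough that $X_\tau\in(X_{t_i-},X_{t_i})$ for $i=1,2$. For $\tau\in D'$ the same computation gives $r_{t_1}(X_\tau)=\tau=\ell_{t_2}(X_\tau)$, so $\tau$ is simultaneously a boundary point of $\mathcal F_{t_1}$ and of $\mathcal F_{t_2}$, whence $\pi(D')\subseteq\mathcal F_{t_1}\cap\mathcal F_{t_2}$.

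Next I would note that $\pi$ is injective on $D'$: if $\tau<\tau'$ both lie in $D$, the defining inequality for $\tau$ applied at $\tau'\in(\tau,t_2)$ gives $X_{\tau'}<X_\tau$, so $\mathbf V(\tau,\tau')\ge|X_\tau-X_{\tau'}|>0$ and $\pi(\tau)\ne\pi(\tau')$. Hence $\pi$ restricted to the compact set $\overline{D'}=D'$ is a homeomorphism onto its image, so $\pi(D')$ is perfect precisely when $D'$ is. Conversely one should identify the intersection exactly: any common point is $\pi(x)=\pi(y)$ with $x$ a boundary point of $\mathcal F_{t_1}$, $y$ of $\mathcal F_{t_2}$ and $X_x=X_y$; invoking Theorem~\ref{l:vd0} (the identification is realised in the up- or the down-tree), together with the no‑upward‑creeping of $X$ and the fact that $t_1,t_2$ are the two jumps straddling the relevant level, one checks that between two such boundary points at a common level there would otherwise be a straddling jump, forcing $x=y$; thus $x$ (and the common point) must lie in $\pi(D')$, i.e.\ $\mathcal F_{t_1}\cap\mathcal F_{t_2}=\pi(D')$. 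Choosing the two jumps huge relative to the oscillation of $X$ on $[t_1,t_2]$ makes this step short, since then no other face reaches level $\approx h$ inside the window and the ``outside'' parts of $\partial\mathcal F_{t_1},\partial\mathcal F_{t_2}$ do not interact.

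It remains to prove that $D'$ is a.s.\ a nonempty perfect set; nonemptiness is clear ($\rho$ is a limit point of $D$, by the argument just below). Write $D=D(t_1,\rho)\cup D(\rho,t_2)$, glued at $\rho$, where $D(t_1,\rho)$ (resp.\ $D(\rho,t_2)$) is the strict decrease set of the piece $X|_{[t_1,\rho]}$, an excursion of $X$ strictly above level $h$ ending at $\rho$ (resp.\ of $X|_{[\rho,t_2]}$, an excursion strictly below $h$ starting at $\rho$). Each of these is the intersection of a descending‑ladder time set and a time‑reversed ascending‑ladder time set; for a spectrally positive (resp.\ spectrally negative) $\alpha$-stable process these are stable regenerative sets of index $1/\alpha$, i.e.\ of codimension $1-1/\alpha$, and since $\alpha<2$ the two codimensions sum to $2(1-1/\alpha)<1$. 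A Hawkes‑type intersection argument — of exactly the kind used, in the opposite direction, in the proof of Theorem~\ref{l:vd0}, and relying on a decomposition at the decrease point $\rho$ in the spirit of \cite{bertoin:1993} to decouple the two ladder structures — then shows that $D(t_1,\rho)$ and $D(\rho,t_2)$ are a.s.\ uncountable perfect sets of Hausdorff dimension $2/\alpha-1$ (the purely local decrease set, of dimension $2-\alpha$, was analysed by Bertoin \cite{bertoin:1994} and Marsalle \cite{marsalle:1998}). Consequently $D'$, and hence $\mathcal F_{t_1}\cap\mathcal F_{t_2}=\pi(D')$, is a perfect set. I expect this last step — turning ``the window decrease set is regenerative, hence perfect'' into a proof, in particular handling that the two ladder structures a priori both depend on $X|_{[t_1,t_2]}$ and that a window decrease point is more restrictive than a local one — to be the main obstacle; the rest is bookkeeping with the face definitions and Theorem~\ref{l:vd0}.
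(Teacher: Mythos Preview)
Your overall strategy---exhibit two jumps straddling a decrease point and read off a perfect set of common boundary points from the decrease set---is the same as the paper's. But you reverse the order of operations, and this is precisely what creates the obstacle you flag at the end.

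The paper instead fixes a jump time $t$ first (a stopping time), then an independent exponential $\zeta$, so that $[t,t+\zeta]$ is a Markovian window. On that window, \cite{bertoin:1993} applies directly: the set $I$ of \emph{global} decrease times of $X$ on $[t,t+\zeta]$ is perfect whenever it is nonempty, and $\mathbb P(I\neq\varnothing)>0$. Since $t+\zeta$ is again a stopping time, the strong Markov property then guarantees a later jump time $r>t+\zeta$ crossing the relevant level. The perfect set $\pi(\{s\in I: X_{t-}\vee X_{r-}<X_s<X_t\wedge X_r\})$ lies in $\mathcal F_t\cap\mathcal F_r$, and that is all that is claimed.

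By contrast, you start from a decrease point $\rho$ (not a stopping time) and define $t_1,t_2$ relative to it; none of $\rho,t_1,t_2$ is a stopping time, so the Hawkes-type independence you need for the two ladder structures on $[t_1,\rho]$ and $[\rho,t_2]$ is not available without extra work---exactly the difficulty you identify. You also spend effort on the reverse inclusion $\mathcal F_{t_1}\cap\mathcal F_{t_2}\subseteq\pi(D')$ via Theorem~\ref{l:vd0}; the paper does not attempt this and does not need it (only containment of a perfect set is asserted). The fix is simply to reorder: choose the first jump and the Markovian window first, invoke \cite{bertoin:1993} as a black box for perfectness of the global decrease set, and only then locate the second jump using the strong Markov property at $t+\zeta$. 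Everything you call ``bookkeeping'' then really is bookkeeping.
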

\begin{proof}
As before, we argue using the unconditioned L{\'e}vy process $X$. Let $t$ be a jump time of $X$ and let $\zeta$ be an independent exponential time with parameter 1. Define \emph{global decrease times} of $X$ on the interval $[t,t+\zeta]$ as those times $s\in[t,t+\zeta]$  at which
\begin{align*}
X_{s'} \geq X_s \geq X_{s''} \quad \text{for all} \quad s'\in[t,s] \quad \text{and}\quad s''\in [s,t+\zeta]
\end{align*}
and let $I$ be the set of all global decrease times on $[t,t+\zeta]$. By \cite{bertoin:1993}, the set $I$ is a perfect set and  $\mathbb{P}(I\neq \varnothing) >0$. On the event $I\neq \varnothing$, let $h\in I$. Then, since $t+\zeta$ is a stopping time, $X$ almost surely jumps accross level $h$ at some time $r> t+\zeta$. The set
\begin{align*}
\pi(\{s \in I~:~ X_{t-} \vee X_{r-} < X_s < X_{t} \wedge X_{r}\})
\end{align*}
then belongs to both the face $\mathcal{F}_t$ and $\mathcal{F}_r$ and is a perfect set.

\end{proof}
For the same reason, and due to cyclicity, it is even possible that a face $\mathcal{F}_t$ is adjacent to itself in the sense that $\ell_t(s) = r_t(s)$. Such points are global cut-points of the shredded sphere $\shred_{\alpha}$. 
\begin{figure}[!h]
 \begin{center}
 \includegraphics[width=0.8\linewidth]{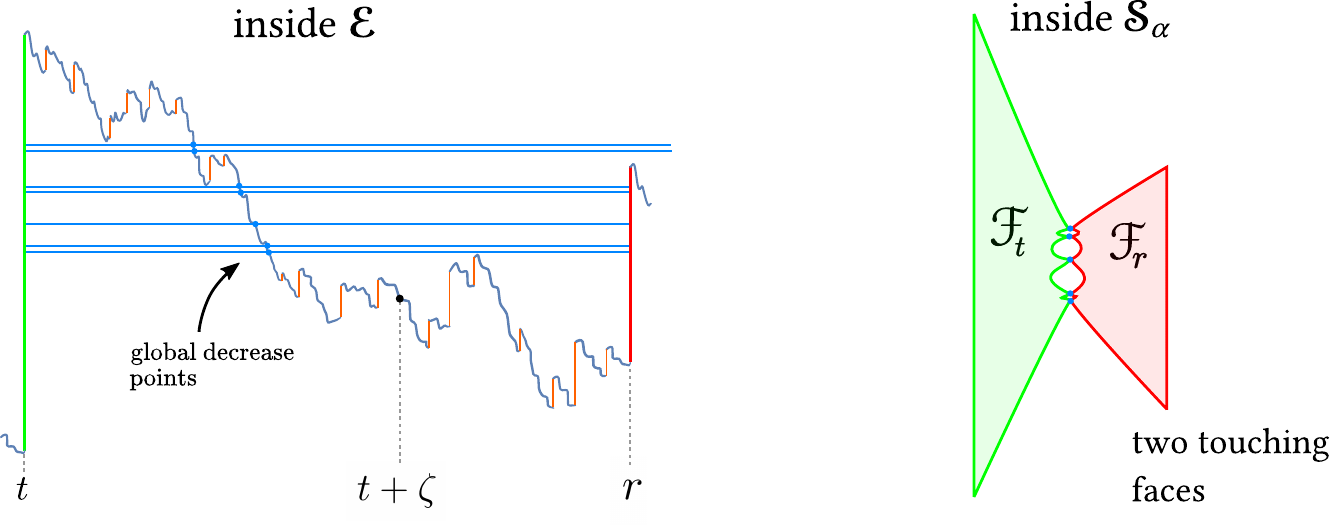}
 \caption{A time of decrease between two jumps may produce, inside $ \shred_\alpha$, two touching faces.}
 
 \label{fig:twofaces}
 \end{center}
 \end{figure}

Due to Proposition \ref{prop:facestouch}, it is natural to consider the graph $ \mathcal{G}_f$ formed by the faces
of $ \shred_\alpha$, where two faces are adjacent if they share a
common point. The connectedness of the $ \mathcal{G}_{f}$ is a question similar to
\cite[Question 11.2]{mating-of-trees}. We have not been able to find a complete solution, however, in the regime $\alpha$ close to  1, it is possible to adapt
the method of \cite{gwynne:2018} to show that the graph is connected
with probability one. The main idea is, for a given face
$\mathcal{F}$, to find another face $\mathcal{F}'$ in a `Markovian' way
and so that $\mathcal{F'}$ is typically larger than $\mathcal{F}$
(i.e.~so that the expectation of the logarithm of the ratio of their
lengths is positive). The interested reader may contact us for more
details. 

\subsection{The case $\alpha=2$}
\label{sec:comments}

The case $\alpha=2$ corresponds to  when $ \mu$ has finite
variance and this falls in the universality class of the generic
causal triangulations studied e.g. in \cite{curien:2017}. Although
\cite{curien:2017} deals with an infinite model, the results should
extend and $ n^{-1/2} \cdot \randdual$ should converge towards a segment of
height given by (a constant multiple of) the maximum of Brownian
excursion $ \mathbf{e}$. In the case of the Brownian excursion $
\mathbf{e}$, the definition of $ \mathbf{D}^*$ and  $\mathbf{V}$ also
make sense, but we trivially have  
$$ \mathbf{V}(s,t) = | \mathbf{e}(s)- \mathbf{e}(t)|,$$ since $
\mathbf{Slits}$ is empty. Moreover, Theorem \ref{thm:V=D*} still holds
since we can adapt the subadditive techniques of \cite[Section
2.3]{curien:2017} (using local time to measure horizontal distances)
and show that $ \mathbf{D}^*(s,t) = | \mathbf{e}(s)- \mathbf{e}(t)|$
as well. We refrain from doing so to keep the paper short.

\begin {thebibliography}{99}

\bibitem{ab:2019} L.~Addario-Berry and M.~Albenque. \emph{Convergence of odd-angulations via symmetrization of labeled trees.} Preprint, arXiv: 1904.04786.

\bibitem{ambjorn:1997} J.~Ambjørn, B.~Durhuus and T.~Jonsson. \emph{Quantum Geometry: A Statistical Field
	Theory Approach.} Cambridge Univ. Press, Cambridge, 1997.

\bibitem{ambjorn:1998} J.~Ambjørn and R.~Loll, \emph{Non-perturbative Lorentzian Quantum Gravity, Causality and Topology Change.} Nucl. Phys. B \textbf{536} (1998) 407-434.

\bibitem{archer2019brownian} E. Archer, \emph{Brownian motion on stable looptrees.} Ann. IHP B. (to appear)

\bibitem{bertoin:1992}
J.~Bertoin, {\it An extension of {P}itman's theorem for spectrally
	positive {L}{\'e}vy processes}, Ann. Probab., 20 (1992),
pp. 1464--1483.

\bibitem{bertoin:1993} J.~Bertoin, \emph{Lévy processes with no positive jumps at an increase time}, Probab.~Theory Relat.~Fields \textbf{96} (1993), 123-135.

\bibitem{bertoin:1994} J.~Bertoin, \emph{Increase of stable processes}, J.~Theoretic.~Prob. \textbf{7} (1994), 551-563.

\bibitem{bertoin:1996}
J.~Bertoin, {\it L{\'e}vy processes}, vol. 121 of Cambridge Tracts in
Mathematics, Cammbridge University Press, Cambridge, 1996.

\bibitem{bertoin:1999}
J. Bertoin, 
{\it Intersection of independent regenerative sets},
Prob Th. Rel. F. 114, 97--121 (1999)

\bibitem{bettinelli:2014} J.~Bettinelli, E.~Jacob and G.~Miermont. \emph{The scaling limit of uniform random plane maps, via the Ambjørn–Budd bijection.} Electron.~J.~Probab. \textbf{19}, no.~74, (2014) 16 pp.

\bibitem{billingsley}
P. Billingsley, {\it Convergence of probability measures,}
John Wiley \& Sons, Inc., New York-London-Sydney, 1968.

\bibitem{bouttier:2004} J.~Bouttier, P.~Di Francesco, and E.~Guitter. \emph{Planar maps as labeled mobiles.} Electron.~J.~Combin., 11, no. 69, (2004), 27 pp.

\bibitem{budd:2018} T.~Budd. \emph{The peeling process on random
    planar maps coupled to an O (n) loop model (with an appendix by
    Linxiao Chen).} (preprint), arXiv:1809.02012 [math.PR] (2018).

\bibitem{burago}
D.~Burago, I.~Burago, Y.~Burago,  S.~A.~Ivanov, S.~Ivanov.
\emph{A course in metric geometry}, American Mathematical
Soc. (Vol. 33), 2001.

\bibitem{Cha97}
L.~Chaumont,  \emph{Excursion normalis{\'e}e, m{\'e}andre et pont 
pour les processus de L{\'e}vy stables}.
Bull. Sci. Math. 121 (1997), no. 5, 377–403.

\bibitem{cori:1981} R.~Cori and B.~Vauquelin.
  \emph{Planar maps are well labeled trees.}
  Canad.~J. Math., 33(5), (1981) 1023–
1042.

\bibitem{curien-stFlour}
N.~Curien, \emph{Peeling random planar maps},
St Flour Lecture notes (2019), available from
https://www.math.u-psud.fr/$~$curien/enseignement.html

\bibitem{curien:2017} N.~Curien, T.~Hutchcroft and A.~Nachmias, {\it Geometric and spectral properties of causal maps,} (preprint), arXiv:1710.03137 [math.PR] (2017).

\bibitem{curien:2014} N.~Curien and I.~Kortchemski, {\it Random stable looptrees,} Electronic Journal of Probability, \textbf{19}, (2014), article 19.

\bibitem{curien:2021} N.~Curien and G.~Miermont and A.~Riera, {\it The scaling limit of planar maps with large faces,} in preparation.

\bibitem{difrancesco:2002} P.~Di Francesco, E.~Guitter, {\it Critical and multicritical semi-random (1 + d)-dimensional lattices and hard
	objects in d dimensions}, J.~Phys.~A: Math. Gen. \textbf{35} (2002) 897–927.

\bibitem{difrancesco:2000} P.~Di Francesco, E.~Guitter and C.~Kristjansen, {\it Integrable 2D Lorentzian Gravity and
	Random Walks}, Nucl.~Phys.~\textbf{B567 [FS]} (2000) 515, hep-th/9907084.

\bibitem{doney-kyprianou}
R. A. Doney and A. E. Kyprianou,
{\em Overshoots and undershoots of L{\'e}vy processes}
Ann. Appl. Probab.
Volume 16, Number 1 (2006), 91-106.

\bibitem{mating-of-trees}
B.~Duplantier, J.~Miller, S.~Sheffield. \emph{Liouville quantum
  gravity as a mating of trees}.  Preprint, arXiv:1409.7055
  (2014). 

\bibitem{duquesne:2003} T. Duquesne, {\it A limit theorem for the contour
	process of contidioned Galton Watson trees}, Ann Prob 31(2), 2003.

\bibitem{duquesne:2008} T.~Duquesne, \emph{The coding of compact real
	trees by real valued functions,} (preprint), arXiv:math/0604106
(2008).

\bibitem{dvoretzky:1961}  A.~Dvoretzky, P.~Erd{\H o}s and S.~Kakutani. \emph{Nonincrease everywhere of the Brownian
motion process.} Proc. Fourth Berkeley Symp. Math. Statist. Probab. 2. Univ.
California Press. (1961), 103-116. 

\bibitem{galvin14}
D.~Galvin, \emph{Three tutorial lectures on entropy and counting},
arXiv:1406.7872

\bibitem{gwynne-miller}
E.~Gwynne and J.~Miller, \emph{Existence and uniqueness of the
  {L}iouville quantum gravity metric for $\gamma\in (0, 2)$.} Preprint
arXiv:1905.00383 (2019).

\bibitem{gwynne:2018} E.~Gwynne and J.~Pfeffer, \emph{Connectivity properties of the adjacency graph of $\mathrm{SLE}_\kappa$ bubbles for $\kappa\in (4,8)$.} arXiv:1803.04923.

\bibitem{hawkes:1977}
J. Hawkes, {\it Intersections of Markov random sets},
Prob. Th. Rel. F. 37, 243--251 (1977).

\bibitem{janson:2012} S.~Janson, \emph{Simply generated trees, conditioned Galton–Watson trees, random allocations and condensation.} Probab. Surv. \textbf{9}, (2012) 103–252.

\bibitem{kolmogorov:1970} A.N. Kolmogorov, S.V. Fomin, Introductory Real Analysis, Dover Publications, Inc, New York,
1970.

\bibitem{lambert:2010} A.~Lambert. \emph{The contour of splitting trees is a L{\'e}vy process.} Ann.~Probab. \textbf{38}, No.~1, (2010)
348-395.

\bibitem{legall:2007} J.~F. Le Gall, \emph{The topological structure of scaling limits of large planar
maps.} Invent.~Math. \textbf{169} (2007) 621–670.

\bibitem{legall:2013} J.~.F. Le Gall. \emph{Uniqueness and universality of the Brownian map.} Ann.~Probab. \textbf{41}, (2013)
2880–2960.

\bibitem{legall:2011} J.~.F. Le Gall and G.~Miermont. \emph{Scaling limits of random planar maps with large faces.} Ann.~Probab. \textbf{39}, (2011) 1–69.

\bibitem{marsalle:1998} L.~Marsalle. \emph{Hausdorff Measures and
    Capacities for Increase Times of Stable Processes.}
  Potential Analysis \textbf{9},
2, (1998) 181-200.

\bibitem{marzouk:2018} C.~Marzouk. \emph{Scaling limits of random bipartite planar maps with a prescribed degree sequence.} Random Struct.~Algor. \textbf{53}, 3, (2018) 448-503.

\bibitem{miermont:2013} G.~Miermont. \emph{The Brownian map is the scaling limit of uniform random plane quadrangulations.}
Acta Math. \textbf{210}, (2013) 319–401.

\bibitem{miller-sheffield-QLE}
J.~Miller and S.~Sheffield. \emph{Liouville quantum gravity and the 
{B}rownian map {I}: the {QLE}(8/3, 0) metric.} 
Inventiones mathematicae (2015): 1-78.

\bibitem{richier:2018} L.~Richier. \emph{Limits of the boundary of random planar maps.} Probab.~Theory Relat.~Fields, \textbf{172}, (2018) 789-827. 

\bibitem{stable-1994}
 G. Samorodnitsky and M. S. Taqqu
{\em Stable Non-Gaussian Random Processes}, Chapman
and Hall, NY, 1994.

\bibitem{schaeffer:1998} G. Schaeffer. Conjugaison d’arbres et cartes combinatoires aléatoires. PhD thesis, Université
Bordeaux I, 1998.
\end {thebibliography}

\end{document}